\newcommand{\R}{\mathbb{R}}\newcommand{\T}{\mathbb{T}}
\newcommand{\D}{\mathbb{D}}
\newcommand{\Z}{\mathbb{Z}}
\newcommand{\ang}[1]{ \left< {#1} \right>}  
\newcommand{\supp}{{\mathrm{supp}}}
\newcommand{\dd}{d}
\newcommand{\ee}{e}
\newcommand{\ii}{i}
\newcommand{\deq}{:=}
\definecolor{darkred}{rgb}{0.9,0,0.3}
\definecolor{darkblue}{rgb}{0,0.3,0.9}
\definecolor{darkgreen}{rgb}{0,0.8,0.2}
\definecolor{vdarkred}{rgb}{0.6,0,0.2}
\definecolor{vdarkblue}{rgb}{0,0.2,0.6}
\DeclareMathOperator{\tr}{Tr}
\DeclareRobustCommand\widecheck[1]{{\mathpalette\@widecheck{#1}}}
\def\@widecheck#1#2{%
  \box\z@\hbox{\m@th$#1#2$}%
  \box\tw@\hbox{\m@th$#1%

    \widehat{%
      \vrule\@width\z@\@height\ht\z@
      \vrule\@height\z@\@width\wd\z@}$}%
  \dp\tw@-\ht\z@ \@tempdima\ht\z@ \advance\@tempdima2\ht\tw@
  \divide\@tempdima\thr@@ \box\tw@\hbox{%

    \raise\@tempdima\hbox{\scalebox{1}[-1]{\lower\@tempdima\box\tw@}}}%
  {\ooalign{\box\tw@ \cr \box\z@}}} \makeatother
\newtheorem{theorem}{Theorem}[section]
 \newtheorem{lemma}[theorem]{Lemma}
\newtheorem{proposition}[theorem]{Proposition}
\newtheorem{remark}[theorem]{Remark} 
\newtheorem{definition}[theorem]{Definition}
\newtheorem{corollary}[theorem]{Corollary}
\title[Unconditional Uniqueness for the NLS]{Unconditional Uniqueness Results for the Nonlinear Schr\"{o}dinger Equation}
\begin{document}

\author[S.~Herr]{Sebastian Herr} \address{Universit\"{a}t Bielefeld,
  Fakult\"{a}t f\"{u}r Mathematik, Postfach 10 01 31, D-33501
  Bielefeld, Germany} \email{herr@math.uni-bielefeld.de}

\author[V.~Sohinger]{Vedran Sohinger} \address{University of Warwick, Mathematics Institute, Zeeman Building, Coventry CV4 7AL, United Kingdom}
\email{V.Sohinger@warwick.ac.uk}

\keywords{Nonlinear Schr\"{o}dinger
  equation, Unconditional uniqueness, Gross-Pitaevskii hierarchy, Fourier-Lebesgue spaces} \subjclass[2010]{35Q55}

\maketitle

\begin{abstract}
We study the problem of unconditional uniqueness of solutions to the cubic nonlinear Schr\"{o}dinger equation.
We introduce a new strategy to approach this problem on bounded domains, in particular on rectangular tori.

It is a known fact that solutions to the cubic NLS give rise to solutions of the Gross-Pitaevskii hierarchy, which is an infinite-dimensional system of linear equations. By using the uniqueness analysis of the Gross-Pitaevskii hierarchy, we obtain new unconditional uniqueness results for the cubic NLS on rectangular tori, which cover the full scaling-subcritical regime in high dimensions. In fact, we prove a more general result which is conditional on the domain.

In addition, we observe that well-posedness of the cubic NLS in Fourier-Lebesgue spaces implies unconditional uniqueness.
\end{abstract}

\section{Introduction}\label{sec:intro}

Consider the \emph{(defocusing, cubic) nonlinear Schr\"{o}dinger equation (NLS)}
\begin{equation}
\label{NLS}
\begin{cases}
\ii \partial_t \phi + \Delta \phi=|\phi|^2 \phi \\
\phi \big|_{t=0}=\phi_0
\end{cases}
\end{equation}
for given initial data $\phi_0 \in H^s(\R^d)$. The standard strategy of proving well-posedness of the initial value
problem is similar to that for the Picard-Lindel\"of theorem for
ODEs. One applies the contraction mapping principle in some suitable
Banach space $X\hookrightarrow C([-T,T],H^s(\R^d))$ to solve the
corresponding integral equation. Given rough initial data, i.e. if
$s<d/2$, typical choices for $X$ are mixed Lebesgue spaces (Strichartz
norms) \cite{Caz} or Fourier restriction spaces \cite{B93}. As a
consequence, this argument yields uniqueness of solutions in $X$, not unconditionally in $C([-T,T],H^s(\R^d))$. Tosio Kato \cite{K95,K96} was the first to address this problem, see Subsection \ref{subsec:prev} for further results and references. The purpose of this paper is to introduce new strategies which apply in the setting of certain bounded spatial domains.

\subsection{Setup of the problem and main results}\label{subsec:setup}

We are considering a spatial domain $\mathbb{D}$ which is either a $d$-dimensional rectangular torus $\mathbb{T}^d_g$ or, more generally, a smooth compact Riemannian manifold of dimension $d$ without boundary which is connected and orientable. In particular, on $\mathbb{D}$, we have a well-defined notion of a positive self-adjoint Laplacian $-\Delta \equiv -\Delta_{\mathbb{D}}$ and we can consider \eqref{NLS}
on the spatial domain $\mathbb{D}$. We address the question of \emph{unconditional uniqueness} of mild solutions of \eqref{NLS}. All of the results that we discuss are valid for arbitrary defocusing and focusing coupling constants in front of the cubic nonlinearity, but we set the coupling constant to be $1$ in \eqref{NLS} for simplicity of notation.

Before we give the precise definition of a mild solution of \eqref{NLS} we recall the definition of $L^2$-based Sobolev spaces on $\mathbb{D}$.
The operator $-\Delta$ has a discrete spectrum $0=\lambda_0 < \lambda_1 < \cdots < \lambda_k \rightarrow \infty$. 
Moreover, $\mathcal{H}_j$ denotes the eigenspace corresponding to eigenvalue $\lambda_j$,
\begin{equation*}
L^2(\mathbb{D}) \;=\; \bigoplus_{j=0}^{\infty} \mathcal{H}_j.
\end{equation*}
Given $k \in \mathbb{N}$, we let 
$
\Lambda_k \;\deq\; \{j \in \mathbb{N}_0:\,\sqrt{|\lambda_j|} \in [k-1,k) \}
$,
and $\chi_k$ be the orthogonal projection $\chi_k: L^2(\mathbb{D}) \rightarrow \bigoplus_{j \in \Lambda_k} \mathcal{H}_j$.
Given $s \in \mathbb{R}$, using $\langle x \rangle := \sqrt{1+|x|^2}$, we define the \emph{Sobolev space} $H^s \equiv H^s(\mathbb{D})$ via
\begin{equation}
\label{Sobolev space D}
\|f\|_{H^s}:=\bigg(\sum_{k=0}^{\infty} \langle \lambda_k \rangle^{s} \|\chi_k f\|_{L^2}^2\bigg)^{1/2}.
\end{equation}
Note that $\ee^{\ii t \Delta}$ acts as a unitary semigroup on $H^s$.
\begin{definition}[Mild solution of the NLS] 
\label{Mild Solution of the NLS}
Given $s \geq 0$, $T>0$, and $\phi_0 \in H^s(\mathbb{D})$, we say that
$\phi=\phi(t,x) $ is a mild solution of \eqref{NLS} in $H^s$ with initial data $\phi_0$ if $\phi \in L^{\infty}_{t\in [-T,T]} H^s_x \cap L^3([-T,T]\times \D)$ and the following conditions hold:
\begin{itemize}
\item[(i)]
For $t \in [-T,T]$ we have
\begin{equation}
\label{NLS mild solution}
\phi(x,t)=\ee^{\ii t\Delta} \phi_0(x)-\ii  \, \int_0^{t} \ee^{\ii(t-\tau) \Delta} |\phi|^2 \phi (x,\tau)\,\dd \tau\,. 
\end{equation}
\item[(ii)] There exists $R>0$ such that
\begin{equation}
\label{NLS mild solution 2}
\mathop{\sup}_{t \in {[-T,T]}}\|\phi(t)\|_{H^s(\mathbb{D})} \leq R\,. 
\end{equation}
\item[(iii)]
For $t \in [-T,T]$ we have
\begin{equation}
\label{NLS mild solution 3}
\|\phi(t)\|_{L^2(\mathbb{D})}= \|\phi_0\|_{L^2(\mathbb{D})}\,. 
\end{equation}
\end{itemize}
\end{definition}

\begin{remark}
\label{local_integrability}
Due to our assumption $\phi \in L^3([-T,T]\times \D)$ the nonlinearity
$|\phi|^2 \phi$ is integrable and defines a distribution on $[-T,T]
\times \mathbb{D}$. If we have the Sobolev embedding
$H^{\frac{d}{6}}(\D)\subset L^3(\D)$ on the domain $\D$ (e.g.\ on
rectangular tori), then $\phi \in L^3([-T,T]\times \D)$ is implied by
$\phi \in L^{\infty}_{t \in [-T,T]} H^s_x$ if $s\geq \frac{d}{6}$. By
the Sobolev embedding on $\D$ (see Proposition \ref{Sobolev_embedding}) it follows that we always have
\begin{equation}
\label{H_Beta}
|\phi|^2 \phi \in L^\infty_{t \in [-T,T]} H^{\beta}_x\, \text{ for } \beta<-\tfrac{d}{2},
\end{equation}
in particular, the integral in the right hand-side of \eqref{NLS mild solution} is taken in $H^{\beta}_x$, and it makes sense since we are integrating over a compact time interval. Note that, a posteriori, \eqref{NLS mild solution} tells us that the function defined by the integral belongs to $H^s_x$.
\end{remark}

For \emph{dyadic integers} $N=2^j$ we define
\begin{equation*}
P_N \deq \sum_{k \in [N,2N)} \chi_k\,.
\end{equation*}
Moreover we define $P_0 \deq \chi_0$. 
In particular, we have
\begin{equation}
\label{sum_P_N}
\sum_{N} P_N = \mathrm{Id}\,,
\end{equation}
where $\sum_{N}$ means that we are summing over all dyadic integers $N$ and $N=0$. 
By \eqref{Sobolev space D}, we have
\begin{equation}
\label{dyadic Sobolev space M}
\|f\|_{H^s}  \sim \Bigg(\sum_{n=0}^{\infty} \langle n \rangle^{2s} \|\chi_n f\|_{L^2}^2\Bigg)^{1/2} \sim \Bigg(\sum_{N} \langle N \rangle^{2s} \|P_N f\|_{L^2}^2\Bigg)^{1/2}\,.
\end{equation}
Furthermore we define the \emph{Besov space} $B^{s}_{2,1} \equiv B^{s}_{2,1}(\mathbb{D})$ 
\begin{equation}
\label{Besov space M}
\|f\|_{B^{s}_{2,1}}:=\sum_{N} \langle N \rangle^s \|P_N f\|_{L^2}\,.
\end{equation}
For all $\delta>0$ we have the inclusion
\begin{equation}
\label{Inclusion}
H^{s+\delta} \hookrightarrow B^{s}_{2,1} \hookrightarrow H^s\,.
\end{equation}

Motivated by the multi-linear analysis of \cite{Burq_Gerard_Tzvetkov_2005, Herr_2013,HTT} and the references therein we define the following.

\begin{definition}
\label{Admissible_domain}
We say that the domain $\mathbb{D}$ is admissible if there exist constants $\zeta_0 \equiv \zeta_0 (\mathbb{D}), \alpha_0 \equiv \alpha_0 (\mathbb{D}), \epsilon \equiv \epsilon (\mathbb{D}) >0$ such that for any fixed
\begin{equation}
\label{Condition_on_eta}
0 \leq \eta \leq \zeta_0\,, \quad \zeta> \zeta_0\,, \quad \alpha>\alpha_0
\end{equation}
the following estimates hold: For all $T \in [0,1]$ and  $u_j(t)=\ee^{\ii t\Delta} \phi_j$, $j=1,2,3$, 
\begin{align}
\label{Star1}
\|u_1 u_2 u_3 \|_{L^1_t B^{-\eta}_{2,1}([-T,T] \times \mathbb{D})} \lesssim{}& T^{\epsilon} \, \|\phi_1\|_{B^{-\eta}_{2,1}} \, \|\phi_2\|_{B^{\zeta}_{2,1}} \, \|\phi_3\|_{B^{\zeta}_{2,1}}\\
\label{Star2}
\|u_1 u_2 u_3 \|_{L^1_t B^{\zeta}_{2,1}([-T,T] \times \mathbb{D})} \lesssim{}& T^{\epsilon} \, \|\phi_1\|_{B_{2,1}^{\zeta}} \, \|\phi_2\|_{B_{2,1}^{\zeta}} \, \|\phi_3\|_{B_{2,1}^{\zeta}}\\
\label{Star3}
\|f_1f_2f_3\|_{B^{-\zeta_0}_{2,1}} \lesssim{}& \|f_1\|_{H^{\alpha}} \, \|f_2\|_{H^{\alpha}} \, \|f_3\|_{H^{\alpha}}\,.
\end{align}
For $\mathbb{D}$ an admissible domain we let 
\begin{equation}
\label{Definition_of_s_0}
s_0 \deq \max\{\zeta_0,\alpha_0,d/4\}\,.
\end{equation}
\end{definition}

\begin{remark}\label{rmk:star1impliesstar2}
On many domains \eqref{Star2} is a direct consequence of \eqref{Star1}, see e.g. Section \ref{torus_admissible} for details in the case of tori. 
\end{remark}

We now state our main results.
\begin{theorem}[Unconditional uniqueness for admissible domains]
\label{Main_theorem}
Let $\mathbb{D}$ be an admissible domain with parameter $s_0$ given by \eqref{Definition_of_s_0} and $s>s_0$. For any $\phi_0\in H^s$ there is a unique mild solution of \eqref{NLS} in $H^s$.
\end{theorem}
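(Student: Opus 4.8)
The plan is to prove uniqueness by a bootstrap argument that upgrades regularity and then contracts. Suppose $\phi, \psi$ are two mild solutions in $H^s$ with the same data $\phi_0$, and let $w = \phi - \psi$. The difficulty is that a priori $w$ only lies in $L^\infty_t H^s_x$, and the bilinear estimates \eqref{Star1}--\eqref{Star2} are stated for free evolutions $\ee^{\ii t\Delta}\phi_j$, not for general solutions. The first step is therefore to pass to an interaction (Duhamel-expanded) formulation: iterate the Duhamel formula \eqref{NLS mild solution} once or twice so that the leading terms are genuinely of the form $u_1 u_2 u_3$ with $u_j = \ee^{\ii t \Delta}\phi_j(t_0)$ evaluated at a fixed time slice, plus a remainder built from triple time-integrals of the nonlinearity. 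This is exactly the mechanism by which the Gross--Pitaevskii hierarchy enters: solutions of \eqref{NLS} generate solutions of the GP hierarchy, and the hierarchy's Duhamel expansion with a boardgame/Klainerman--Machedon-type combinatorial organisation is controlled by the multilinear estimates packaged in Definition \ref{Admissible_domain}.

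Concretely, I would first record that for $s > s_0 \geq d/4$ and $s \geq \zeta_0, \alpha_0$, the solution actually lies in $L^\infty_t B^{\zeta}_{2,1}$ for some $\zeta$ with $\zeta_0 < \zeta \leq s$ after a short time, using \eqref{Inclusion} together with \eqref{Star2} to see that the Duhamel term gains the Besov summability; this promotes the rough $L^\infty_t H^s$ bound to a Besov bound compatible with \eqref{Star1}. Then, working on a sufficiently short interval $[-T,T]$ (where $T^\epsilon$ from the estimates beats the size constant $R$ from \eqref{NLS mild solution 2}), one writes the difference equation for $w$. The cubic nonlinearity $|\phi|^2\phi - |\psi|^2\psi$ is trilinear in $(\phi,\psi,w)$ with exactly one factor of $w$; applying \eqref{Star1} with $\phi_1$ in the low-regularity slot $B^{-\eta}_{2,1}$ (absorbing $w$, which we only control in a weak norm after subtracting) and $\phi_2,\phi_3$ in $B^{\zeta}_{2,1}$ (the now-regular $\phi,\psi$), one obtains
\begin{equation*}
\|w\|_{L^\infty_t B^{-\eta}_{2,1}} \lesssim T^\epsilon R^2 \, \|w\|_{L^\infty_t B^{-\eta}_{2,1}},
\end{equation*}
after also handling the remainder terms of the Duhamel iteration by \eqref{Star2} and \eqref{Star3}. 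Choosing $T$ small makes the constant $< 1$, forcing $w \equiv 0$ on $[-T,T]$; a standard continuity/covering argument then extends uniqueness to the whole interval.

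The main obstacle I anticipate is the bookkeeping in the Duhamel iteration: one must expand far enough that every term is either a product of three free evolutions (handled by \eqref{Star1}--\eqref{Star2}) or a fully iterated remainder whose time-integrated structure produces an extra power of $T$ and can be closed by \eqref{Star3} together with the uniform $H^\alpha$-type control; keeping the combinatorics of the nested integrals under control, and making sure the weak norm $B^{-\eta}_{2,1}$ in which one closes the estimate for $w$ is consistent across all terms (in particular that the remainder does not require more regularity on $w$ than one has), is where the real work lies. A secondary technical point is justifying all manipulations at the level of distributions in $H^\beta_x$ for $\beta < -d/2$ as in Remark \ref{local_integrability}, i.e. checking that the Fourier-side/eigenfunction-expansion computations converge; this is routine given the $L^3_{t,x}$ hypothesis but must be stated.
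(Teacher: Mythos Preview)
There is a genuine gap. The estimates \eqref{Star1}--\eqref{Star2} are assumed \emph{only for free evolutions} $u_j=\ee^{\ii t\Delta}\phi_j$, and your scheme never escapes this restriction. Iterating Duhamel ``once or twice'' produces leading terms that are products of free evolutions, yes, but the remainder is a nested time-integral whose innermost factor is still the full nonlinearity $|\phi|^2\phi$ evaluated along the \emph{nonlinear} solution. You propose to close this remainder using \eqref{Star2} and \eqref{Star3}, but \eqref{Star2} again applies only to free waves, and \eqref{Star3} is a static estimate with no factor of $T^\epsilon$; so the remainder gives you a term of size $O(R^3)$ with no small parameter, and the contraction does not close. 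The same objection kills the preliminary step of upgrading $\phi$ from $L^\infty_t H^s$ to $L^\infty_t B^\zeta_{2,1}$ via \eqref{Star2}: the Duhamel integrand is not a product of free waves. In short, any \emph{finite} Duhamel expansion leaves a remainder on which the admissibility hypotheses say nothing.

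The paper's route is different in an essential way. One lifts both solutions to factorized solutions $|\phi^{(j)}\rangle\langle\phi^{(j)}|^{\otimes k}$ of the GP hierarchy and iterates the hierarchy Duhamel formula $r$ times; because the hierarchy is \emph{linear}, the $r$-fold iterate is a sum (organised by the boardgame argument) of terms $J^k$ that are genuinely products of free evolutions acting on a single time-slice of the solution, with no uncontrolled remainder---the ``remainder'' is just the $(r{+}1)$-st iterate itself. Estimates \eqref{Star1}--\eqref{Star3} then give a bound $C^r T^{\epsilon(r-1)}R^{2(k+r)}$, and one sends $r\to\infty$ rather than choosing $T$ small against a fixed finite expansion. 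This yields equality of the rank-one density matrices, hence $\phi^{(2)}=a(t)\phi^{(1)}$ with $|a|=1$; a separate Gronwall argument using mass conservation and Proposition~\ref{Product estimate proposition} then forces $a\equiv 1$. The infinite iteration through the linear hierarchy is precisely what allows the free-wave hypotheses to suffice, and is the step your finite-depth contraction is missing.
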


\begin{remark}
We refer to the uniqueness result in Theorem \ref{Main_theorem} as \textbf{unconditional} in the sense that we are only assuming the control of the $H^s$ norm of the solution as in Definition \ref{Mild Solution of the NLS} above. In particular, we do not assume that the solution belongs to some Strichartz space. 
\end{remark}

We can view Theorem \ref{Main_theorem} as a result that is \emph{conditional} on the domain. The condition is that $\mathbb{D}$ is admissible in the sense of Definition \ref{Admissible_domain} above. Once this assumption is satisfied, the uniqueness analysis follows by the general argument, which consists of comparing mild solutions of \eqref{NLS} to those of the \emph{Gross-Pitaevskii (GP) hierarchy on $\mathbb{D}$}, which is defined in \eqref{GP hierarchy} below. In particular, we use the observation that tensor products of mild solutions of the NLS are mild solutions of the GP hierarchy, which is a well-known fact (for classical solutions). In light of this observation, one typically considers the GP hierarchy as a generalization of the NLS and proves analogues of results known for the NLS in the context of the hierarchy. This point of view was first taken in \cite{CP1}.
Our work is the first instance in which we can use knowledge from the GP hierarchy to answer nontrivial questions about the NLS.
 
In Section \ref{torus_admissible}, we show that rectangular tori of dimension $d \geq 2$ are admissible in the sense of Definition \ref{Admissible_domain}. We now state the explicit unconditional uniqueness result that we obtain from Proposition \ref{Torus_admissibility}.
\begin{corollary}
\label{unconditional_uniqueness_torus}
Solutions to \eqref{NLS} are unconditionally unique in $H^s(\mathbb{T}^d_g)$ provided that
\begin{equation}
\label{unconditional_uniqueness_torus_s}
\begin{cases}
s>\frac{7}{12}\,\,&\mbox{for } d=2
\\
s>\frac{4}{5}\,\,&\mbox{for } d=3
\\ 
s>\frac{d}{2}-1\,\,&\mbox{for } d \geq 4\,.
\end{cases}
\end{equation}
\end{corollary}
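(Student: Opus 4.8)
The plan is to derive Corollary~\ref{unconditional_uniqueness_torus} directly from Theorem~\ref{Main_theorem} by verifying that the rectangular torus $\mathbb{T}^d_g$ is admissible in the sense of Definition~\ref{Admissible_domain} with parameters $\zeta_0,\alpha_0$ small enough that $s_0 = \max\{\zeta_0,\alpha_0,d/4\}$ matches the thresholds in \eqref{unconditional_uniqueness_torus_s}. Since Theorem~\ref{Main_theorem} gives uniqueness for all $s > s_0$, everything reduces to: (a) quoting the statement (Proposition~\ref{Torus_admissibility}, proved in Section~\ref{torus_admissible}) that $\mathbb{T}^d_g$ is admissible, and (b) checking the arithmetic that the admissible parameters yield $s_0 = 7/12$ for $d=2$, $s_0 = 4/5$ for $d=3$, and $s_0 = d/2 - 1$ for $d \geq 4$. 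Note that for $d \geq 4$ we have $d/4 \leq d/2 - 1$, so the Strichartz-type constraints $\zeta_0,\alpha_0$ dominate; for $d = 2,3$ one must see which of $\zeta_0$, $\alpha_0$, $d/4$ is largest.

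Concretely, I would first record the relevant trilinear estimates on $\mathbb{T}^d_g$. The estimate \eqref{Star3} is a Hölder/Sobolev-embedding fact: $\|f_1 f_2 f_3\|_{B^{-\zeta_0}_{2,1}} \lesssim \|f_1\|_{H^\alpha}\|f_2\|_{H^\alpha}\|f_3\|_{H^\alpha}$ holds whenever $\alpha$ exceeds a dimension-dependent value (roughly $\alpha_0 \sim d/6$ in low dimensions, controlled by $L^3$-embedding and duality against $B^{\zeta_0}_{2,1}$), and this is the easy ingredient. The genuinely substantive input is \eqref{Star1} (with \eqref{Star2} following by Remark~\ref{rmk:star1impliesstar2}): this is a trilinear Strichartz estimate for free Schrödinger evolutions on rectangular tori, of exactly the type established by Bourgain, Burq--Gérard--Tzvetkov, Herr, and Herr--Tataru--Tzvetkov \cite{B93,Burq_Gerard_Tzvetkov_2005,Herr_2013,HTT}. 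On $\mathbb{T}^2_g$ the relevant $L^4$ eigenfunction/Strichartz bound costs an arbitrarily small power of the frequency, which feeds into $\zeta_0$ being any number $> 1/2 \cdot \text{(something)}$; tracking the loss carefully through the $B_{2,1}$ summation over dyadic blocks is what produces the explicit $7/12$ and $4/5$. The point is to invoke the known scale-invariant or almost-scale-invariant $L^p_{t,x}$ estimates on tori and absorb the $\ell^1$ Besov summation at the cost of an $\epsilon$, then optimize $\eta \leq \zeta_0 < \zeta$.

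The main obstacle is not in this Corollary itself — which is a bookkeeping consequence — but in the underlying Proposition~\ref{Torus_admissibility}: establishing \eqref{Star1} with the sharp exponents. The subtlety there is that \eqref{Star1} must hold for the product $u_1 u_2 u_3$ measured in $B^{-\eta}_{2,1}$ with only one factor carrying the low regularity $-\eta$ and the other two carrying high regularity $\zeta$ — this is a "derivative on the low-frequency factor" configuration, requiring a bilinear-refined Strichartz estimate so that the two high-frequency outputs can be paired efficiently (e.g.\ via a Bourgain-type $L^4$ bound combined with the divisor bound on tori, or the $\ell^2$-decoupling estimates of Bourgain--Demeter in $d \geq 3$). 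Once one has the frequency-localized trilinear bound $\|P_{N_1}u_1\,P_{N_2}u_2\,P_{N_3}u_3\|_{L^1_t L^2_x} \lesssim T^\epsilon (N_{\min}/N_{\max})^{\theta} \cdots$ with a positive gain $\theta$, summing over dyadic frequencies in the $B_{2,1}$ norms is routine, and the resulting constraints on $\zeta_0$, together with the $d/4$ coming from the energy/$L^2$-conservation structure of the GP-hierarchy comparison in Theorem~\ref{Main_theorem}, pin down the exponents in \eqref{unconditional_uniqueness_torus_s}. So for the Corollary proper I would simply write: "This is immediate from Theorem~\ref{Main_theorem} and Proposition~\ref{Torus_admissibility}, noting that the admissible parameters on $\mathbb{T}^d_g$ satisfy $\max\{\zeta_0,\alpha_0,d/4\} = $ the right-hand side of \eqref{unconditional_uniqueness_torus_s}."
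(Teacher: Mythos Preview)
Your proposal is correct and matches the paper's approach exactly: the paper derives Corollary~\ref{unconditional_uniqueness_torus} in one line from Theorem~\ref{Main_theorem} and Proposition~\ref{Torus_admissibility}, and your final sentence is precisely that. The arithmetic you outline is also right: with the values of $\zeta_0,\alpha_0$ from Proposition~\ref{Torus_admissibility} one computes $s_0=\max\{\tfrac14,\tfrac{7}{12},\tfrac12\}=\tfrac{7}{12}$ for $d=2$, $s_0=\max\{\tfrac35,\tfrac45,\tfrac34\}=\tfrac45$ for $d=3$, and $s_0=\tfrac{d}{2}-1$ for $d\geq 4$.
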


\begin{remark}
\label{unconditional_uniqueness_torus_remark}
The required regularity in Corollary \ref{unconditional_uniqueness_torus} is below the energy class for $d=2,3$. For $d \geq 4$, \eqref{NLS} is energy-supercritical and \eqref{unconditional_uniqueness_torus_s} covers the full scaling-subcritical range.
\end{remark}

The above method also applies in one dimension, i.e.\ in the case when $\mathbb{D}=\mathbb{T}$ and yields unconditional uniqueness in $H^{s}(\T)$ for $s>\frac13$. However, we can obtain a stronger result by a different method.

\begin{theorem}\label{thm:uu-1d}
Let $p>3$. For any $\phi_0\in L^2(\T)$ there is a unique mild solution $\phi$ of \eqref{NLS} in $L^2(\T)$ with the property $\phi \in L^p([-T,T]\times \T)$.
\end{theorem}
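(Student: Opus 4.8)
The plan is to deduce the unconditional uniqueness from the local well-posedness of the \emph{renormalized} (Wick-ordered) cubic NLS on $\T$ in Fourier--Lebesgue spaces, combined with a persistence-of-regularity argument that forces every mild solution into the associated solution space. Write $\mathcal{F}L^{0,r}(\T)$ for the space with norm $\|(\widehat f(n))_{n}\|_{\ell^{r}(\Z)}$; then $\mathcal{F}L^{0,2}(\T)=L^{2}(\T)$, so $L^{2}(\T)\hookrightarrow\mathcal{F}L^{0,r}(\T)$ with $\|f\|_{\mathcal{F}L^{0,r}}\le\|f\|_{L^{2}}$ for all $r\ge 2$. The first step is renormalization: given a mild solution $\phi$ with $\mu:=\|\phi_{0}\|_{L^{2}(\T)}$, set $v(t):=\ee^{\ii\theta t}\phi(t)$ with $\theta$ the appropriate multiple of $\mu^{2}$. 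Multiplication by the unimodular factor $\ee^{\ii\theta t}$ commutes with $\ee^{\ii t\Delta}$, and by the mass conservation property \eqref{NLS mild solution 3} it sends $\phi$ to a mild solution $v\in L^{\infty}_{t}L^{2}_{x}\cap L^{p}([-T,T]\times\T)$, with the same data $\phi_{0}$ and the same mass, of the Wick-ordered equation $\ii\partial_{t}v+\Delta v=(|v|^{2}-2\mu^{2})v$. The point is that the resonant contribution $2\mu^{2}\widehat v(n)$ to $\widehat{|v|^{2}v}(n)$ — the term responsible for the failure of the trilinear estimate at low regularity on $\T$ — is cancelled, leaving a genuinely non-resonant cubic nonlinearity (up to the harmless diagonal term $-|\widehat v(n)|^{2}\widehat v(n)$). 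It therefore suffices to prove uniqueness for $v$.

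For the given $p>3$ one fixes $r=r(p)\in[2,\infty)$ and a Fourier restriction norm $X_{r}$ built over $\mathcal{F}L^{0,r}(\T)$ (with $\ell^{r}_{n}$ summation in frequency) such that $X_{r}([-T_{0},T_{0}])\hookrightarrow C([-T_{0},T_{0}];\mathcal{F}L^{0,r}(\T))\cap L^{p}([-T_{0},T_{0}]\times\T)$ and such that the bi- and trilinear one-dimensional Schr\"odinger estimates close the contraction for the non-resonant nonlinearity. This produces the local well-posedness of the Wick-ordered equation in $\mathcal{F}L^{0,r}(\T)$: for data of size $\varrho$ in $\mathcal{F}L^{0,r}$ there is a unique solution in $X_{r}([-T_{0},T_{0}])$ with $T_{0}=T_{0}(\varrho)>0$, together with the linear bounds $\|\ee^{\ii t\Delta}f\|_{X_{r}(I)}\lesssim\|f\|_{\mathcal{F}L^{0,r}}$ and $\big\|\int_{0}^{t}\ee^{\ii(t-\tau)\Delta}F\,\dd\tau\big\|_{X_{r}(I)}\lesssim_{I}\|F\|_{L^{p/3}(I\times\T)}$. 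The inhomogeneous bound — obtained by combining the standard inhomogeneous estimate in $X_{r}$ with a dual Strichartz estimate — is the point at which the hypothesis enters: the admissible range of the dual Strichartz estimate forces $r(p)\to\infty$, i.e.\ $\mathcal{F}L^{0,r}$ toward the critical Fourier--Lebesgue space, as $p\downarrow 3$.

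To prove uniqueness, let $\phi_{1},\phi_{2}$ be two mild solutions with data $\phi_{0}$ and let $v_{1},v_{2}$ be the corresponding renormalizations. By \eqref{NLS mild solution} and Remark~\ref{local_integrability} each $v_{i}$ is continuous into $H^{\beta}$ for $\beta<-\tfrac12$ and bounded in $L^{2}$, hence weakly continuous into $L^{2}$, so $\mathcal{I}:=\{t\in[-T,T]:v_{1}(t)=v_{2}(t)\}$ is closed and $0\in\mathcal{I}$. Fix $t_{0}\in\mathcal{I}$ and put $\psi:=v_{1}(t_{0})=v_{2}(t_{0})\in L^{2}(\T)\hookrightarrow\mathcal{F}L^{0,r}(\T)$, so $\|\psi\|_{\mathcal{F}L^{0,r}}\le\mu$. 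By the previous step there is $\delta=\delta(\mu)>0$ and a unique $X_{r}$-solution $w$ of the Wick-ordered equation on $J:=[t_{0}-\delta,t_{0}+\delta]\cap[-T,T]$ with $w(t_{0})=\psi$. On $J$ the Wick-ordered Duhamel formula with base point $t_{0}$ holds for $v_{i}$ (this follows from \eqref{NLS mild solution} and the group property of $\ee^{\ii t\Delta}$), with forcing term $(|v_{i}|^{2}-2\mu^{2})v_{i}\in L^{p/3}(J\times\T)$ — here one uses $v_{i}\in L^{p}([-T,T]\times\T)\cap L^{\infty}_{t}L^{2}_{x}$ and $p/3>1$. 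Combining the two linear bounds above yields $v_{i}\in X_{r}(J)$, and then the uniqueness part gives $v_{1}=v_{2}=w$ on $J$; thus $\mathcal{I}$ is open. Since $[-T,T]$ is connected, $\mathcal{I}=[-T,T]$, and undoing the modulation gives $\phi_{1}=\phi_{2}$.

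The main obstacle is the persistence step: forcing a mild solution, a priori controlled only in $L^{\infty}_{t}L^{2}_{x}\cap L^{p}_{t,x}$, into the Bourgain-type space $X_{r}$. This hinges entirely on having, at the Fourier--Lebesgue level $\mathcal{F}L^{0,r}$ with $r$ as large as $r(p)$, an inhomogeneous estimate strong enough to absorb a nonlinearity lying merely in $L^{p/3}_{t,x}$ with $p/3$ barely above $1$. It is precisely here that $p>3$ is used, and it cannot be relaxed to $p=3$ by this method, since that would require well-posedness in the scaling-critical Fourier--Lebesgue space.
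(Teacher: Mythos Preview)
Your proposal is correct and follows essentially the same route as the paper: gauge-transform to the renormalized (Wick-ordered) equation, use the $L^{p/3}$ control of the nonlinearity to force any mild solution into the Fourier--Lebesgue Bourgain space $X_r$, and then invoke the uniqueness from the local well-posedness theory in that space. The only imprecision is terminological: what you call a ``dual Strichartz estimate'' is, in the paper, simply the Hausdorff--Young inequality $L^{p/3}(\R\times\T)\hookrightarrow X^{0,0}_{p/3}$, which together with the linear inhomogeneous estimate (Lemma~\ref{lem:lin} with $\beta=0$, $b=1$) produces exactly your claimed bound; your open--closed connectedness argument is a slightly more careful version of the paper's one-line appeal to time-translation invariance.
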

Note that the above uniqueness result is in an almost
scaling-invariant class and, by the Sobolev embedding, implies
unconditional uniqueness in $H^{s}(\T)$ for $s>\frac16$. We
emphazise that this is strictly weaker than the result obtained
in \cite{GuoKwonOh}, where unconditional uniqueness is proved in
$H^{\frac16}(\T)$ by a normal form method with infinitely many iterations. In comparison, the proof of Theorem \ref{thm:uu-1d} is an easy consequence of an estimate in \cite{GH}.

\subsection{Previously known results}\label{subsec:prev}
The problem of unconditional uniqueness of solutions to the NLS has been extensively studied in the literature. 
Using Sobolev embedding and Gronwall's inequality, one can easily obtain an unconditional uniqueness result the NLS when $s>d/2$. Note that, for $d \geq 2$, this is already above the energy space. 
We henceforth consider only the nontrivial regime $s<d/2$.
The first such result on $\mathbb{R}^d$ is that of Kato \cite{K95,K96}, with subsequent extensions and improvements in dimension $d \geq 2$ obtained in \cite{Cazenave_Han_Fang,FurioliTerraneo,FurioliTerraneoPlanchon,HanFang,Lu_Xu,Rogers,Tao,Win_Tsutsumi}, we refer to these references for precise results. The inherent losses of derivatives in the Strichartz estimate do not allow one to adapt these arguments to compact domains. The first unconditional uniqueness result on the one-dimensional torus was obtained in \cite{GuoKwonOh} by the use of normal form techniques and it applies to data in $H^{\frac16}(\T)$, in agreement with the result from \cite{K95,K96} in $H^{\frac16}(\R)$.  In fact, in the case $\D=\R$, the result in \cite{K95,K96} gives uniqueness of mild solutions $\phi \in L^\infty_t L^2_x\cap L^{12}_tL^3_x$. An extension of the techniques of \cite{GuoKwonOh} to higher dimensions has been announced by \cite{Kishimoto}. The announced regime agrees with the regime from Corollary \ref{unconditional_uniqueness_torus} when $d \geq 6$.
We note that the unconditional uniqueness problem was also studied for other dispersive models on $\mathbb{R}^d$ \cite{Bulut_Czubak_Li_Pavlovic_Zhang,Farah,FurioliTerraneoPlanchon,Masmoudi_Nakanishi,Masmoudi_Planchon,Planchon}.

In the method of deriving the NLS from many-body quantum dynamics developed by Spohn \cite{2S}, a central role is played by the GP hierarchy \eqref{GP hierarchy}. The final step in this approach of deriving the NLS typically consists of showing uniqueness of solutions to \eqref{GP hierarchy}. 
This is a non-trivial problem due to the large number of terms that one has to consider. Several methods have been introduced to deal with this issue. They are a Feynman graph expansion \cite{ESY2}, the \emph{boardgame argument} \cite{KM}, or the quantum de Finetti theorem \cite{ChHaPavSei}. This has been an extensively studied problem.
We refer the reader to the introduction of \cite{HerrSohinger,VS2} for further references and for a more detailed discussion of the methods.
In a recent preprint \cite{Ammari_Liard_Rouffort}, the authors study connections between solutions of an initial value problem and an associated hierarchy in order to obtain results about the uniqueness of the hierarchy. This is the opposite direction from the one that we are taking in the proof of Theorem \ref{Main_theorem}.

Our second method to obtain Theorem \ref{thm:uu-1d} is based on
multi-linear estimates in restriction norms based on Fourier-Lebesgue
spaces and the Hausdorff-Young inequality, we refer to Section
\ref{sec:1d} for more details on this. While the implied result
on unconditional uniqueness in $H^{s}(\T)$ for $s>\frac16$ is not new,
the general observation here is that the analysis of dispersive PDEs on tori in Fourier-Lebesgue spaces yields unconditional uniqueness results in $L^2$-based spaces.

\subsection{Organization of the paper}\label{subsec:orga}
In Section \ref{The Gross-Pitaevskii hierarchy}, we recall the definition of the Gross-Pitaevskii hierarchy and we prove the general result given by Theorem \ref{Main_theorem}.
In Section \ref{torus_admissible}, we apply Theorem \ref{Main_theorem} to the setting of rectangular tori and show the explicit unconditional uniqueness result stated in Corollary \ref{unconditional_uniqueness_torus} above. Section \ref{sec:1d} is devoted to the proof of Theorem \ref{thm:uu-1d}. In Appendix \ref{Appendix A}, we prove the multi-linear estimate given by Proposition \ref{Product estimate proposition}, which is used in the proof of Theorem \ref{Main_theorem}. In doing so, we recall several general Sobolev embedding results. In Appendix \ref{appendix-bern}, we give a proof of the Bernstein inequality on rectangular tori, which is stated in Lemma \ref{Bernstein_inequality_torus}. In Appendix \ref{sec:lin-est}, we give the proof of a key linear estimate used in the proof of Theorem \ref{thm:uu-1d}.

\section{The Gross-Pitaevskii hierarchy}
\label{The Gross-Pitaevskii hierarchy}
In this section, we study the Gross-Pitaevskii hierarchy and prove Theorem \ref{Main_theorem}.
The required notation and definitions are given in Subsection \ref{Definition of the hierarchy}. In Subsection \ref{Factorized solutions of the Gross-Pitaevskii hierarchy}, we study tensor products of mild solutions of \eqref{NLS} and show that they solve the Gross-Pitaevskii hierarchy in an appropriate sense. Finally, we give the proof of Theorem \ref{Main_theorem} in Subsection \ref{The uniqueness analysis}.

\subsection{Definition of the hierarchy}
\label{Definition of the hierarchy}


Given $k \in \mathbb{N}$ we call functions $\sigma^{(k)}:\mathbb{D}^k \times \mathbb{D}^k \rightarrow \mathbb{C}$ \emph{density matrices of order $k$}, and we denote this class by $\mathcal{M}_k$. In \eqref{GP hierarchy}, for each fixed $t$, $\gamma^{(k)}(t) \in \mathcal{M}_k$. Given $\sigma^{(k+1)} \in \mathcal{M}_{k+1}$ we formally define $B_{k+1} \sigma^{(k+1)} \in \mathcal{M}_{k+1}$ by
\begin{equation}
\label{Collision_Operator_1}
B_{k+1} \sigma^{(k+1)} \deq \sum_{j=1}^{k} B_{j,k+1}(\sigma^{(k+1)})\,,
\end{equation}
where, for $1 \leq j \leq k$ $(\vec{x}_k;\vec{x}_k') \in \mathbb{D}^k \times \mathbb{D}^k$ we define
\begin{align}
\notag
&B_{j,k+1}(\sigma^{(k+1)}) (\vec{x}_k;\vec{x}_k') \deq 
\\
\label{Collision_Operator_2}
&\int_{\mathbb{D}} \Big(\delta(x_j-x_{k+1})-\delta(x_j'-x_{k+1})\Big) \sigma^{(k+1)}(\vec{x}_k,x_{k+1};\vec{x}_k',x_{k+1}) \,\dd x_{k+1}\,.
\end{align}
$B_{k+1}$ is called the \emph{collision operator}.
At this point we view \eqref{Collision_Operator_1} and \eqref{Collision_Operator_2} as formal definitions on the level of density matrices \footnote{In \eqref{Collision_Operator_2} we are just formally restricting the function $\sigma^{(k+1)}$ to the set where $x_{k+1}=x_{k+1}'=x_j$ and $x'_j=x_{k+1}=x'_j$ respectively.}. 
For our purposes we are going to look at $\sigma^{(k+1)}$ of the form
\begin{equation*}
\sigma^{(k+1)}(\vec{x}_{k+1};\vec{x}'_{k+1}) = {\prod_{\ell=1}^{k+1}} f_{\ell} (x_\ell) g_\ell (x_\ell')\,,
\end{equation*}
for some $f_1,\ldots,f_k,g_1,\ldots,g_k \in L^2(\mathbb{D})$ or superpositions thereof. All of the objects involving the collision operator that we will consider will be well-defined by using this a priori formal definition, see Lemmas \ref{Mild factorized solutions} and \ref{sigma_bound} below. 
In principle it is possible to impose additional regularity assumptions on $\sigma^{(k+1)}$ under which these operations can be written in terms of an approximate identity (see \cite[Theorem 1(iv)]{EESY}). We do not take this approach here.

The \emph{(defocusing, cubic) Gross-Pitaevskii (GP) hierarchy} on $\mathbb{D}$ is given by
\begin{equation}
\label{GP hierarchy}
\begin{cases}
\ii \partial_t \gamma^{(k)} + (\Delta_{\vec{x}_k}-\Delta_{\vec{x}'_k}) \gamma^{(k)}=B_{k+1} \gamma^{(k+1)}\\
\gamma^{(k)} \big|_{t=0}=\gamma_0^{(k)}\,.
\end{cases}
\end{equation}
Here, $\gamma^{(k)}$ are time-dependent density matrices of order $k$. We always assume that the right-hand side of the first equation in \eqref{GP hierarchy} is well-defined in the sense described above.

On density matrices of order $k$ we define the Sobolev differentiation operators 
\begin{equation*}
S^{(k,\alpha)} := \prod_{j=1}^{k} \langle \Delta_{x_j} {\rangle^{\alpha/2}} \prod_{j=1}^{k} \langle \Delta_{x_j'} \rangle^{\alpha/2}
\end{equation*}
and the free Schr\"{o}dinger evolution operators
\begin{equation}
\label{U^k}
  \mathcal{U}^{(k)}(t) \,\sigma^{(k)}:=e^{it \sum_{j=1}^{k} \Delta_{x_j}} \sigma^{(k)} e^{-it\sum_{j=1}^{k} \Delta_{x'_j}}
\end{equation}
in the standard way.
We note that \eqref{U^k} is well-defined if we view $\sigma^{(k)} \equiv \sigma^{(k)}(\vec{x}_k,\vec{x}'_k)$ as the kernel of an operator on $L^2(\mathbb{D}^k \times \mathbb{D}^k)$.

\begin{definition}[The class $\mathfrak{H}^s$]
\label{mathfrakHs}
Given $s \geq 0$, $\mathfrak{H}^{s} \equiv \mathfrak{H}^s(\mathbb{D})$ denotes the set of all sequences $(\gamma^{(k)})_k$ of density matrices such that, for each $k \in \mathbb{N}$ the following properties hold:
\begin{itemize}
\item[(i)]  $\gamma^{(k)} \in L^2_{sym}(\mathbb{D}^k \times \mathbb{D}^k)$ in the sense that $\gamma^{(k)} \in L^2(\mathbb{D}^k \times \mathbb{D}^k)$ and
\begin{equation*}\gamma^{(k)}(x_{\pi(1)},\ldots,x_{\pi(k)};x'_{\pi(1)},\ldots,x'_{\pi(k)}) =\gamma^{(k)}(x_1,\ldots,x_k;x'_1,\ldots,x'_k)
\end{equation*}
for all permutations $\pi \in \mathcal{S}^k$. 
Moreover, 
$$\gamma^{(k)}(\vec{x}_k;\vec{x}'_k)=\overline{\gamma^{(k)}(\vec{x}'_k;\vec{x}_k)}$$ 
for all $(\vec{x}_k,\vec{x}'_k)$ in $\mathbb{D}^k \times \mathbb{D}^k$.
\item[(ii)] $S^{(k,s)}\gamma^{(k)} \in \mathcal{L}^1_k$, i.e.\ $S^{(k,s)}\gamma^{(k)}$ belongs to the trace class on $L^2(\mathbb{D}^k \times \mathbb{D}^k)$, whose norm is given by $\|\mathcal{T}\|_{\mathcal{L}^1_k} := \tr |\mathcal{T}|$, where $|\mathcal{T}| := \sqrt{\mathcal{T}^*\,\mathcal{T}}$.
\end{itemize}
We abbreviate (i) and (ii) as $\gamma^{(k)} \in \mathfrak{H}^s_k= \mathfrak{H}^s_k(\mathbb{D})$ and we define the norm on this space as
\begin{equation*}
\|\gamma^{(k)}\|_{\mathfrak{H}^s_k} := \tr |S^{(k,s)}\gamma^{(k)}|\,.
\end{equation*}
\end{definition}

\begin{definition}[Mild solution of the Gross-Pitaevskii hierarchy]
\label{Mild solution of the Gross-Pitaevskii hierarchy} Given $s > \frac{d}{4}$, $T>0$, and 
$\big(\gamma_0^{(k)}\big)_k \in \mathfrak{H}^{s}$,
we say that $\big(\gamma^{(k)}\big)_k=\big(\gamma^{(k)}(\vec{x}_k;\vec{x}'_k,t)\big)_k$ is a mild solution of \eqref{GP hierarchy} in $\mathfrak{H}^{s}$ with initial data $\big(\gamma_0^{(k)}\big)_k\,$
 if the following conditions hold:
\begin{itemize}
\item[(i)]
For all $k \in \mathbb{N}$ we have
\begin{equation}
\label{GP hierarchy mild solution 2}
\gamma^{(k)} \in L^{\infty}_{t \in [-T,T]} \mathfrak{H}_k^s\,.
\end{equation}
\item[(ii)] 
For all $k \in \mathbb{N}$ and $t \in [-T,T]$ we have
\begin{equation}
\label{GP hierarchy mild solution}
\gamma^{(k)}(t)=\mathcal{U}^{(k)}(t) \gamma_0^{(k)}-\ii \, \int_0^t \mathcal{U}^{(k)}(t-s) B_{k+1} \gamma^{(k+1)} (s)\,\dd s\,.
\end{equation}
\item[(iii)] There exists a constant $\mathcal{R}>0$ such that 
\begin{equation}
\label{GP hierarchy mild solution 3}
\tr \big(|S^{(k,s)}\gamma^{(k)}(t)|\big) \leq \mathcal{R}^{2k}
\end{equation}
for all $k \in \mathbb{N}$ and $t \in [-T,T]$.
\end{itemize}
\end{definition}

\begin{remark}
We note that, for $t \in [-T,T]$ equality in \eqref{NLS mild solution} is assumed to hold in $H^s$, whereas in \eqref{GP hierarchy mild solution} it is assumed to hold in $\mathfrak{H}^s_k$.
\end{remark}

\subsection{Factorized solutions of the Gross-Pitaevskii hierarchy}
\label{Factorized solutions of the Gross-Pitaevskii hierarchy}
We first note a general relationship between mild solutions of \eqref{NLS} and \eqref{GP hierarchy}.
\begin{lemma} [Mild factorized solutions of the Gross-Pitaevskii hierarchy]
\label{Mild factorized solutions} 
Let $s \geq 0$, $T>0$ and $\phi_0 \in H^s(\mathbb{D})$ be given.
Suppose that $\phi$ is a mild solution of \eqref{NLS} in $H^s$ with initial data $\phi_0$. Then
\begin{equation}
\label{factorized_solution}
\gamma^{(k)}(\vec{x}_k;\vec{x}'_k,t):=|\phi \rangle \langle \phi|^{\otimes k} (\vec{x}_k;\vec{x}'_k,t)
\end{equation}
is a mild solution of \eqref{GP hierarchy} in $\mathfrak{H}^{s}$ with initial data $\big(|\phi_0 \rangle \langle \phi_0|^{\otimes k}\big)_k$.
\end{lemma}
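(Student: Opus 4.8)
The plan is to verify directly that $\gamma^{(k)} := |\phi\rangle\langle\phi|^{\otimes k}$ satisfies the three defining conditions of a mild solution of \eqref{GP hierarchy} in $\mathfrak{H}^s$ (Definition \ref{Mild solution of the Gross-Pitaevskii hierarchy}): the regularity/boundedness statements (i) and (iii), the symmetry and Hermiticity properties needed for $\gamma^{(k)}(t)\in\mathfrak{H}^s_k$, and the Duhamel identity \eqref{GP hierarchy mild solution}. First I would record the kernel: $\gamma^{(k)}(\vec{x}_k;\vec{x}_k',t)=\prod_{\ell=1}^k \phi(x_\ell,t)\overline{\phi(x_\ell',t)}$. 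Symmetry under permutations $\pi\in\mathcal{S}^k$ and the conjugation relation $\gamma^{(k)}(\vec{x}_k;\vec{x}_k')=\overline{\gamma^{(k)}(\vec{x}_k';\vec{x}_k)}$ are immediate from the product structure. For the $\mathfrak{H}^s_k$-bound, note $S^{(k,s)}\gamma^{(k)}=\big(\langle\Delta\rangle^{s/2}|\phi\rangle\langle\phi|\langle\Delta\rangle^{s/2}\big)^{\otimes k}$ is (up to the usual identification) the $k$-fold tensor power of the rank-one operator $|\langle\Delta\rangle^{s/2}\phi\rangle\langle\langle\Delta\rangle^{s/2}\phi|$, whose trace norm is $\||\langle\Delta\rangle^{s/2}\phi\||_{L^2}^2=\|\phi\|_{H^s}^2$; hence $\|\gamma^{(k)}(t)\|_{\mathfrak{H}^s_k}=\|\phi(t)\|_{H^s}^{2k}$, which by \eqref{NLS mild solution 2} is bounded by $R^{2k}$, giving (i) and (iii) with $\mathcal{R}=\max\{R,1\}$. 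Measurability in $t$ with values in $\mathfrak{H}^s_k$ follows from $\phi\in L^\infty_t H^s_x$ together with continuity of the tensor-power map.

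The substantive step is the Duhamel identity \eqref{GP hierarchy mild solution}. Starting from the NLS Duhamel formula \eqref{NLS mild solution}, write $\phi(t)=e^{it\Delta}\phi_0 - i\int_0^t e^{i(t-\tau)\Delta}(|\phi|^2\phi)(\tau)\,d\tau$, abbreviate $F:=|\phi|^2\phi$, and expand the tensor product $\gamma^{(k)}(t)=\big|\phi(t)\big\rangle\big\langle\phi(t)\big|^{\otimes k}$. I would first handle $k=1$: using $|\phi(t)\rangle\langle\phi(t)|$ and the above expression for $\phi(t)$, one gets the free term $\mathcal{U}^{(1)}(t)|\phi_0\rangle\langle\phi_0|$ plus two cross terms $-i\int_0^t e^{i(t-\tau)\Delta}|F(\tau)\rangle\langle e^{i(t-\tau)\Delta}\phi(\tau)|\,d\tau$ and its conjugate, plus a quadratic-in-$F$ remainder. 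The key algebraic point is that the two cross terms combine to $-i\int_0^t \mathcal{U}^{(1)}(t-\tau)\big(|F(\tau)\rangle\langle\phi(\tau)| - |\phi(\tau)\rangle\langle F(\tau)|\big)\,d\tau$, and that $B_2\gamma^{(2)}(\tau)=B_2|\phi\rangle\langle\phi|^{\otimes2}(\tau)$ equals exactly $|F(\tau)\rangle\langle\phi(\tau)| - |\phi(\tau)\rangle\langle F(\tau)|$ by the formal definition \eqref{Collision_Operator_1}–\eqref{Collision_Operator_2}: the $\delta(x_1-x_2)$ term contracts the kernel $\phi(x_1)\phi(x_2)\overline{\phi(x_1')}\,\overline{\phi(x_2')}$ to $|\phi(x_1)|^2\phi(x_1)\overline{\phi(x_1')}=F(x_1)\overline{\phi(x_1')}$, and the $\delta(x_1'-x_2)$ term to $\phi(x_1)\overline{F(x_1')}$. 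The quadratic remainder must be shown to vanish — but it cannot, so what actually happens is that those are precisely the terms that would appear if one iterates; more carefully, at $k=1$ the correct identity follows because $\int_0^t$ of the Duhamel cross terms already reproduces $-i\int_0^t\mathcal{U}^{(1)}(t-\tau)B_2\gamma^{(2)}(\tau)\,d\tau$ once one substitutes $\phi(\tau)$ (not its Duhamel expansion) inside $F$ and inside the bra/ket. I would therefore set up the computation so that $\gamma^{(k)}$ and $B_{k+1}\gamma^{(k+1)}$ are expressed in terms of $\phi(\tau)$ itself and verify \eqref{GP hierarchy mild solution} by differentiating in $t$ (or equivalently by the fundamental theorem of calculus applied to $\mathcal{U}^{(k)}(-t)\gamma^{(k)}(t)$), reducing the claim to the pointwise identity $i\partial_t\gamma^{(k)}+(\Delta_{\vec{x}_k}-\Delta_{\vec{x}_k'})\gamma^{(k)}=B_{k+1}\gamma^{(k+1)}$, which is a Leibniz-rule computation: $i\partial_t$ hitting the $\ell$-th factor $\phi(x_\ell)$ produces $(-\Delta_{x_\ell}\phi+F)(x_\ell)$ times the rest, the $-\Delta_{x_\ell}$ cancels against $\Delta_{\vec{x}_k}$, and the surviving $F(x_\ell)\overline{\phi(x_\ell')}\prod_{m\neq\ell}\cdots$ is exactly the $\delta(x_\ell-x_{k+1})$ contribution to $B_{\ell,k+1}\gamma^{(k+1)}$; the $\phi(x_\ell')$-factors contribute the $-\delta(x_\ell'-x_{k+1})$ terms with opposite sign.

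The main obstacle is rigor in the preceding paragraph: the collision operator is only formally defined (Remark after \eqref{GP hierarchy}, and the footnote), $F=|\phi|^2\phi$ is merely in $L^\infty_t H^\beta_x$ for $\beta<-d/2$ by \eqref{H_Beta}, and one is integrating/differentiating operator-valued functions whose kernels live in negative Sobolev spaces. I would address this exactly as the paper signals — by invoking Lemma \ref{sigma_bound} (referenced in the text as justifying that ``all of the objects involving the collision operator... will be well-defined'') to make sense of $B_{k+1}\gamma^{(k+1)}$ acting on factorized kernels, and by carrying out the time-integral identity at the level of kernels tested against smooth functions on $\mathbb{D}^k\times\mathbb{D}^k$, where all manipulations (Fubini, Leibniz, the $\delta$-contractions) are legitimate because $\phi\in L^3_{t,x}$ makes $F\in L^1_{t,x}$ and hence the formal contraction $F(x_\ell)\overline{\phi(x_\ell')}\prod_{m\neq\ell}\phi(x_m)\overline{\phi(x_m')}$ is an integrable kernel, integrable in $\tau$ over the compact interval. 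Passing from the tested identity back to an identity in $\mathfrak{H}^s_k$ uses \eqref{GP hierarchy mild solution 2}/\eqref{GP hierarchy mild solution 3}, which we have already established, so that both sides of \eqref{GP hierarchy mild solution} are a priori known to lie in $\mathfrak{H}^s_k$ (resp.\ to be integrable in that space) and agreement as distributions upgrades to agreement in the space. I expect the write-up to spend most of its length on this bookkeeping; the algebra itself is the short tensor-Leibniz computation sketched above.
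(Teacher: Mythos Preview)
Your handling of the regularity conditions (i) and (iii) matches the paper. For the Duhamel identity you take a genuinely different route: you propose to differentiate $\mathcal{U}^{(k)}(-t)\gamma^{(k)}(t)$ in $t$, verify the pointwise GP equation via a Leibniz-rule computation, and then integrate back. The paper instead stays entirely at the integral level. It expands both $\gamma^{(k)}(t)$ and the right-hand side of \eqref{GP hierarchy mild solution} using the NLS Duhamel formula (substituting $e^{i(t-\tau)\Delta}\phi(\tau)=e^{it\Delta}\phi_0-i\int_0^\tau e^{i(t-\sigma)\Delta}(|\phi|^2\phi)(\sigma)\,d\sigma$ on the right), and then matches the two via an elementary combinatorial identity (Lemma~\ref{Useful_algebraic_fact}) for a product $\prod_{r=1}^{2k}\big(F_r+\int_0^tG_r\big)$, proved by Fubini after decomposing the cube $[0,t]^{|\mathcal S_2|}$ according to its largest coordinate. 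That identity is exactly what resolves your ``quadratic remainder'' puzzle: the iterated integrals appearing on the GP side symmetrize to reproduce \emph{every} term in the full product expansion of $|\phi(t)\rangle\langle\phi(t)|^{\otimes k}$, including the quadratic-and-higher ones in $F$. So your first approach was in fact the paper's, and you abandoned it one step too early. Your differentiation approach is viable in principle, but it requires justifying the product rule for $\partial_t\big(|\Phi\rangle\langle\Phi|^{\otimes k}\big)$ when $\Phi=e^{-it\Delta}\phi$ is only absolutely continuous with values in $H^\beta$, $\beta<-d/2$ (while bounded in $H^s$); this can be made rigorous by testing against tensor-product smooth functions and reducing to products of scalar absolutely continuous functions, but it is more delicate than your sketch indicates, and the paper's purely integral argument sidesteps the issue entirely.
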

It is a known fact that tensor products of classical solutions to the NLS give us classical solutions of the Gross-Pitaevskii hierarchy. Lemma \ref{Mild factorized solutions} tells us that the analogous claim is true when we consider mild solutions. Note that we cannot deduce Lemma \ref{Mild factorized solutions} by a density argument from classical solutions, because this would in particular rely on unconditional uniqueness of solutions to \eqref{NLS}, which is what we are proving. Instead, we have to argue directly and verify the assumptions of Definition \ref{Mild solution of the Gross-Pitaevskii hierarchy} above.
Before proving Lemma \ref{Mild factorized solutions} we note a general algebraic fact that we will use in the proof.

\begin{lemma}
\label{Useful_algebraic_fact}
Given $m \in \mathbb{N}$, numbers $F_1, \ldots, F_m \in \mathbb{C}$ and locally integrable functions $G_1, \cdots, G_m :\mathbb{R} \rightarrow \mathbb{C}$, we have for all $t \in \mathbb{R}$
\begin{align}
\notag
&\prod_{r=1}^{m} \Big(F_r+\int_0^t G_r(t_r)\,\dd t_r\Big)=
\\
&\mathop{\sum_{\mathcal{S}_1 \sqcup \mathcal{S}_2=}}_{\{1,2,\ldots,m\}} \Big(\prod_{r \in \mathcal{S}_1} F_r\Big) \, \Bigg(\sum_{r \in \mathcal{S}_2} \int_0^{t} G_r(\tau_r) \, \Big(\prod_{\tilde{r} \in \mathcal{S}_2 \setminus \{r\}} \int_0^{\tau_r} G_{\tilde{r}}(\tau_{\tilde{r}}) \,\dd \tau_{\tilde{r}}\Big) \,\dd \tau_r\Bigg)\,.
\end{align}
\end{lemma}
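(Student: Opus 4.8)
The plan is to prove the identity by induction on $m$, which is the natural way to handle a product of $m$ factors. First I would dispose of the base case $m=1$: the claimed formula reads $F_1 + \int_0^t G_1(t_1)\,\dd t_1 = F_1 + \int_0^t G_1(\tau_1)\,\dd\tau_1$, where on the right-hand side the partition of $\{1\}$ is either $\mathcal{S}_1 = \{1\}$, $\mathcal{S}_2 = \emptyset$ (contributing $F_1$) or $\mathcal{S}_1 = \emptyset$, $\mathcal{S}_2 = \{1\}$ (contributing $\int_0^t G_1(\tau_1)\,\dd\tau_1$, with the inner product over $\mathcal{S}_2\setminus\{1\} = \emptyset$ being empty, hence $1$). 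So the base case is immediate.

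For the inductive step, suppose the formula holds for $m-1$. I would write
\[
\prod_{r=1}^{m}\Big(F_r + \int_0^t G_r(t_r)\,\dd t_r\Big) = \Big(\prod_{r=1}^{m-1}\big(F_r + \int_0^t G_r(t_r)\,\dd t_r\big)\Big)\cdot\Big(F_m + \int_0^t G_m(t_m)\,\dd t_m\Big),
\]
apply the induction hypothesis to the first factor, and distribute the second factor $F_m + \int_0^t G_m(\tau)\,\dd\tau$ over the resulting sum. Each partition $\mathcal{S}_1\sqcup\mathcal{S}_2 = \{1,\dots,m-1\}$ then splits into two partitions of $\{1,\dots,m\}$ according to whether $m$ is placed in $\mathcal{S}_1$ (multiply the $\mathcal{S}_1$-product by $F_m$) or in $\mathcal{S}_2$ (multiply by $\int_0^t G_m$). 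The first case is bookkeeping and matches the target sum restricted to partitions with $m\in\mathcal{S}_1$. The real content is the second case: one must show that
\[
\Big(\sum_{r\in\mathcal{S}_2}\int_0^t G_r(\tau_r)\prod_{\tilde r\in\mathcal{S}_2\setminus\{r\}}\int_0^{\tau_r}G_{\tilde r}(\tau_{\tilde r})\,\dd\tau_{\tilde r}\,\dd\tau_r\Big)\cdot\int_0^t G_m(\tau)\,\dd\tau
\]
equals the corresponding nested-integral expression for the index set $\mathcal{S}_2\cup\{m\}$.

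The heart of the matter — and the step I expect to be the main obstacle — is this last product-of-two-nested-integrals identity. The clean way to see it is to recognize that for any finite index set $\mathcal{A}$, the quantity $H(\mathcal{A}) := \sum_{r\in\mathcal{A}}\int_0^t G_r(\tau_r)\prod_{\tilde r\in\mathcal{A}\setminus\{r\}}\int_0^{\tau_r}G_{\tilde r}(\tau_{\tilde r})\,\dd\tau_{\tilde r}\,\dd\tau_r$ is exactly the sum, over all total orderings (permutations $\rho$) of $\mathcal{A}$, of the iterated integral $\int_{t\ge \tau_{\rho(1)}\ge\cdots\ge\tau_{\rho(|\mathcal{A}|)}\ge 0}\prod_{r\in\mathcal{A}}G_r(\tau_r)\,\dd\tau$: indeed, choosing the outermost variable (the one with the largest value, i.e.\ $r$ with $\tau_r$ largest) is exactly the sum over $r\in\mathcal{A}$, and the inner factor $\prod_{\tilde r\ne r}\int_0^{\tau_r}\cdots$ recursively enumerates the orderings of $\mathcal{A}\setminus\{r\}$ on $[0,\tau_r]$. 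Equivalently, and most transparently, $\int_0^t H(\mathcal{A} \text{ on } [0,s])$-type telescoping shows $H(\mathcal{A}) = \prod_{r\in\mathcal{A}}\big(\exp\!\int_0^t G_r\big)$-like structure is \emph{not} quite right; rather, the correct statement is simply the combinatorial product rule for the simplex decomposition: $\big(\sum_{\text{orderings of }\mathcal{S}_2}\big)\cdot\big(\int_0^t G_m\big)$ re-sorts into $\big(\sum_{\text{orderings of }\mathcal{S}_2\cup\{m\}}\big)$ by inserting $m$ into all possible positions of each ordering, which corresponds precisely to Fubini applied to the product of a simplex integral and a one-dimensional integral. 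I would therefore first prove the lemma $H(\mathcal{A}) = \sum_{\rho\in\mathcal{S}(\mathcal{A})}\int_{\Delta_\rho(t)}\prod_{r}G_r$ by an easy induction (peeling off the largest variable), and then the product identity $H(\mathcal{S}_2)\cdot\int_0^t G_m = H(\mathcal{S}_2\cup\{m\})$ follows because inserting a new independent variable into an ordered simplex and integrating is the same as summing over all insertion positions — a standard shuffle/Fubini argument. Feeding this back into the distribution step completes the induction. The only genuine care needed is tracking empty-set conventions (empty products are $1$, empty sums are $0$) so that the degenerate partitions $\mathcal{S}_1 = \emptyset$ and $\mathcal{S}_2 = \emptyset$ are handled correctly; everything else is routine.
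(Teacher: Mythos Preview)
Your argument is correct, but it is considerably more roundabout than the paper's. The paper does not induct on $m$ at all: it simply expands the product on the left-hand side, which immediately gives
\[
\sum_{\mathcal{S}_1\sqcup\mathcal{S}_2=\{1,\dots,m\}}\Big(\prod_{r\in\mathcal{S}_1}F_r\Big)\Big(\prod_{r\in\mathcal{S}_2}\int_0^t G_r(\tau_r)\,\dd\tau_r\Big),
\]
and then applies the single identity
\[
\prod_{r\in\mathcal{S}_2}\int_0^t G_r(\tau_r)\,\dd\tau_r
=\sum_{r\in\mathcal{S}_2}\int_0^t G_r(\tau_r)\Big(\prod_{\tilde r\in\mathcal{S}_2\setminus\{r\}}\int_0^{\tau_r}G_{\tilde r}(\tau_{\tilde r})\,\dd\tau_{\tilde r}\Big)\,\dd\tau_r,
\]
obtained by Fubini and decomposing the cube $[0,t]^{|\mathcal{S}_2|}$ according to which coordinate is maximal. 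In your notation this is exactly the statement $H(\mathcal{A})=\prod_{r\in\mathcal{A}}\int_0^t G_r$, and once you have it the product identity $H(\mathcal{S}_2)\cdot\int_0^t G_m=H(\mathcal{S}_2\cup\{m\})$ is a triviality---no shuffle argument or sum over orderings is needed. Your induction effectively rediscovers this identity inside the inductive step, routed through the (correct but unnecessary) interpretation of $H(\mathcal{A})$ as a sum of simplex integrals over all orderings. Both approaches are valid; the paper's buys a two-line proof, while yours has the minor advantage of making the role of the ``maximal variable'' structurally explicit via the outermost integration in the induction.
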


\begin{proof}[Proof of Lemma \ref{Useful_algebraic_fact}]
Let $\mathcal{S}_2 \subset \{1,2,\ldots,m\}$ be nonempty. We note that
\begin{equation*}
\Bigg(\prod_{r \in \mathcal{S}_2} \int_0^{t} G_r(\tau_r)\,d\tau_r\Bigg)=\Bigg(\sum_{r \in \mathcal{S}_2} \int_0^{t} G_r(\tau_r) \, \Big(\prod_{\tilde{r} \in \mathcal{S}_2 \setminus \{r\}} \int_0^{\tau_r} G_{\tilde{r}}(\tau_{\tilde{r}}) \,\dd \tau_{\tilde{r}}\Big) \,\dd \tau_r\Bigg)\,.
\end{equation*}
The above identity follows by using Fubini's theorem for the integral on the left-hand side and by decomposing the region of integration $[0,t]^{{|\mathcal{S}_2|}}$ in terms of the maximal component $\tau_r$ with $r \in \mathcal{S}_2$. 
The claim now follows by expanding the product in the expression on the left-hand side of Lemma \ref{Useful_algebraic_fact}.
\end{proof}
\begin{proof}[Proof of Lemma \ref{Mild factorized solutions}]
From \eqref{NLS mild solution 2}, we deduce the condition \eqref{GP hierarchy mild solution 2} of being a mild solution, as well as \eqref{GP hierarchy mild solution 3} with $\mathcal{R}=R$.
We hence have to verify \eqref{GP hierarchy mild solution} for fixed $k \in \mathbb{N}$ and $t \in [-T,T]$. 
By using \eqref{NLS mild solution}, it follows that
\begin{align}
& \gamma^{(k)}(\vec{x}_k;\vec{x}'_k,t)
=\prod_{j=1}^{k} \Bigg[\Big(\ee^{\ii t \Delta} \phi_0(x_j)+ \int_0^{t} -\ii  \, \ee^{\ii(t-\tau_j) \Delta} |\phi|^2 \phi (x_j,\tau_j)\,\dd \tau_j \Big) \notag\\
&\times \Big(\overline{ \ee^{\ii t \Delta} \phi_0(x'_j)}+ \int_0^{t}\overline{ -\ii \, \ee^{\ii(t-\tau'_j) \Delta} |\phi|^2 \phi (x'_j,\tau'_j)}\,\dd \tau'_j \Big)\Bigg]\,. \label{LHS k}
\end{align}
By \eqref{U^k}, it follows that
\begin{equation}
\label{RHS k 1}
\mathcal{U}^{(k)}(t) \gamma_0^{(k)}(\vec{x}_k;\vec{x}'_k)=\prod_{j=1}^{k} \ee^{\ii t \Delta} \phi_0(x_j) \, \overline{\ee^{\ii t \Delta} \phi_0(x'_j)}.
\end{equation}
We compute
\begin{align*}
&-\ii \, \int_0^t \mathcal{U}^{(k)}(t-\tau) \,B_{k+1}  \gamma^{(k+1)} (\vec{x}_k;\vec{x}'_k,\tau)\,\dd \tau\\
=&  -\ii \, \int_0^{t} \mathcal{U}^{(k)}(t-\tau) \, \sum_{j=1}^{k} \,  \Big[\Big(|\phi|^2 \phi (x_j,\tau) \, \overline{\phi(x_j',\tau)} -\phi(x_j,\tau) \, |\phi|^2 \bar{\phi}(x'_j,\tau) \Big) \, \mathop{\prod_{{\ell}=1}^{k}}_{\ell \neq j} \phi(x_{\ell},\tau) \, \overline{\phi(x'_{\ell},\tau)} \Big] \,\dd \tau  \\
=& -\ii \, \int_0^{t} \sum_{j=1}^{k} \Bigg[\Big( \ee^{\ii(t-\tau) \Delta} |\phi|^2 \phi (x_j,\tau) \, \overline{\ee^{\ii(t-\tau)\Delta} \phi(x_j',\tau)}-\ee^{\ii(t-\tau)\Delta}\phi(x_j,\tau) \, \overline{\ee^{\ii(t-\tau) \Delta} |\phi|^2 \phi (x'_j,\tau)} \Big) \\
&\times \mathop{\prod_{\ell=1}^{k}}_{\ell \neq j} \ee^{\ii(t-\tau)\Delta}\phi(x_{\ell},\tau) \, \overline{\ee^{\ii(t-\tau)\Delta}\phi(x'_{\ell},\tau)}
\Bigg] \,\dd \tau.
\end{align*}
We again use \eqref{NLS mild solution} and the semigroup property to deduce that the above expression equals
\begin{align}
& \int_0^{t} \sum_{j=1}^{k} \Bigg\{ \Bigg[ \Big(-\ii \, \ee^{\ii(t-\tau) \Delta} |\phi|^2 \phi (x_j,\tau) \Big) \, \Big(\overline{\ee^{\ii t \Delta} \phi_0(x'_j)}+\int_0^{\tau} \overline{-\ii \, \ee^{\ii(t-\tau'_j) \Delta} |\phi|^2 \phi(x'_j,\tau'_j)}\,\dd \tau'_j\Big) \notag \\
&+\Big(\ee^{\ii t \Delta} \phi_0(x_j) + \int_0^{\tau} -\ii\, \ee^{\ii(t-\tau_j) \Delta} |\phi|^2 \phi(x_j,\tau_j)\,\dd \tau_j\Big) \,  \overline{-\ii \, \ee^{\ii(t-\tau) \Delta} |\phi|^2 \phi (x'_j,\tau) }\Bigg] \notag\\
&\times \mathop{\prod_{\ell=1}^{k}}_{\ell \neq j} \Bigg[\Big(\ee^{\ii t \Delta}\phi_0(x_{\ell}) + \int_0^{\tau} -\ii\, \ee^{\ii(t-\tau_{\ell}) \Delta} |\phi|^2 \phi(x_{\ell},\tau_{\ell})\,\dd \tau_{\ell}\Big) \notag \\
& \Big( \overline{\ee^{\ii t \Delta}\phi_0(x'_{\ell})} + \int_0^{\tau}\overline{-\ii\, \ee^{\ii(t-\tau'_{\ell}) \Delta} |\phi|^2 \phi(x_{\ell},\tau'_{\ell})}\,\dd \tau'_{\ell}\Big) \Bigg] \Bigg\}\,\dd \tau \label{RHS k 2}\,.
\end{align}

The claim now follows from \eqref{LHS k}, \eqref{RHS k 1}, \eqref{RHS k 2} if we apply Lemma \ref{Useful_algebraic_fact} with $m=2k$ and functions $F_r,G_r$ given by
\begin{equation}
\label{Fr_Gr}
\begin{cases}
F_r:=\ee^{\ii t \Delta} \phi_0(x_r)\,\,\mbox{for}\,\,r \in\{1,2,\ldots,k\}\\
\\
F_r:=\overline{\ee^{\ii t \Delta} \phi_0(x'_{r-k})}\,\,\mbox{for}\,\,r \in\{k+1,k+2,\ldots,2k\}\\
\\
G_r(\tau):=-\ii  \, e^{\ii(t-\tau) \Delta} |\phi|^2 \phi (x_r,\tau)\,\,\mbox{for}\,\,r \in\{1,2,\ldots,k\}\\
\\
G_r(\tau):=\overline{-\ii  \, e^{\ii(t-\tau) \Delta} |\phi|^2 \phi (x'_{r-k},\tau)}\,\,\mbox{for}\,\,r \in\{k+1,k+2,\ldots,2k\}\,.
\end{cases}
\end{equation}
We note that we are applying Lemma \ref{Useful_algebraic_fact} to a time integral in functions that take values in the Sobolev space $H^{\beta}_x$, for $\beta$ as in \eqref{H_Beta} above. In particular, applying Lemma \ref{Useful_algebraic_fact} at this step is well-justified.
\end{proof}

\begin{remark}
We note that, in the proof of Lemma \ref{Mild factorized solutions} , we did not use condition \eqref{NLS mild solution 3} of being a mild solution of the NLS. This condition will be used crucially at several later points, see \eqref{modulus one phase}, \eqref{Case 1 lower bound on v}, and \eqref{Case 2 lower bound on v} below.
\end{remark}

\subsection{The uniqueness analysis: Proof of Theorem \ref{Main_theorem}}
\label{The uniqueness analysis}

Throughout this subsection we consider $s>s_0$, for $s_0$ as in \eqref{Definition_of_s_0}. 
It suffices to consider $s <\frac{d}{2}$.
Moreover, we fix $\phi_0 \in H^s(\mathbb{D})$ and consider two mild solutions $\phi^{(1)},\phi^{(2)}$ of \eqref{NLS} in $H^s$ with initial data $\phi_0$ as in Definition \ref{Mild Solution of the NLS}. Our goal is to show that $\phi^{(1)}=\phi^{(2)}$. Before proving Theorem \ref{Main_theorem} we first note a uniqueness result for the \emph{factorized solutions} \eqref{factorized_solution} of \eqref{GP hierarchy}.

\begin{proposition}
\label{factorized_GP_uniqueness}
Let $\phi^{(1)},\phi^{(2)}$ be mild solutions of \eqref{NLS} in $H^s$ with initial data $\phi_0 \in H^s$. We have
\begin{equation}
\label{factorized_GP_uniqueness2}
\big|\phi^{(1)} \rangle \langle \phi^{(1)}\big|^{\otimes k} =\big|\phi^{(2)} \rangle \langle \phi^{(2)}\big|^{\otimes k}\,,
\end{equation}
for all $k \in \mathbb{N}$.
\end{proposition}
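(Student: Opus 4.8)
The plan is to prove Proposition~\ref{factorized_GP_uniqueness} by combining Lemma~\ref{Mild factorized solutions} with a uniqueness theorem for the Gross-Pitaevskii hierarchy in the class $\mathfrak{H}^s$. By Lemma~\ref{Mild factorized solutions}, both $\gamma_1^{(k)} := |\phi^{(1)}\rangle\langle\phi^{(1)}|^{\otimes k}$ and $\gamma_2^{(k)} := |\phi^{(2)}\rangle\langle\phi^{(2)}|^{\otimes k}$ are mild solutions of \eqref{GP hierarchy} in $\mathfrak{H}^s$ with the \emph{same} initial data $(|\phi_0\rangle\langle\phi_0|^{\otimes k})_k$, since $\phi^{(1)}(0) = \phi^{(2)}(0) = \phi_0$. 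Hence the proposition reduces to showing that mild solutions of \eqref{GP hierarchy} in $\mathfrak{H}^s$ with given initial data are unique, at least for the two specific factorized solutions at hand. The admissibility hypotheses \eqref{Star1}--\eqref{Star3}, together with $s > s_0 \geq d/4$, are precisely the multilinear inputs needed to run such a uniqueness argument, so the bulk of the work is the GP uniqueness analysis, which presumably occupies the remainder of Subsection~\ref{The uniqueness analysis} after this statement.

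Concretely, I would set $\Gamma^{(k)} := \gamma_1^{(k)} - \gamma_2^{(k)}$ and iterate the Duhamel formula \eqref{GP hierarchy mild solution}: since the initial data agree, $\Gamma^{(k)}(t) = -\ii\int_0^t \mathcal{U}^{(k)}(t-\tau) B_{k+1}\Gamma^{(k+1)}(\tau)\,d\tau$, and unwinding this $n$ times expresses $\Gamma^{(k)}(t)$ as an $n$-fold time-integral of $\mathcal{U}$'s and collision operators applied to $\Gamma^{(k+n)}$. One then needs (a) a way to control the combinatorial explosion in the number of terms — this is where one invokes a boardgame-type reorganization, the quantum de Finetti theorem, or, exploiting the factorized structure here, a direct estimate keyed to the tensor-product form $|\phi^{(i)}\rangle\langle\phi^{(i)}|^{\otimes k}$; and (b) the a priori bounds \eqref{GP hierarchy mild solution 3}, $\tr|S^{(k,s)}\gamma_i^{(k)}(t)| \leq R^{2k}$, to absorb the growth in $k$. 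The spacetime bounds \eqref{Star1} and \eqref{Star2} are used to estimate each collision term with a gain of a power $T^\epsilon$ of the (short) time interval, and \eqref{Star3} converts the $B^{-\zeta_0}_{2,1}$ output back into the $H^s$-type trace norm; iterating, one obtains $\|\Gamma^{(k)}\|_{L^\infty_t \mathfrak{H}^s_k} \lesssim (CT^\epsilon)^n R^{\text{poly}(k,n)} \to 0$ as $n\to\infty$ on a sufficiently small subinterval, forcing $\Gamma^{(k)} \equiv 0$ there, and then a continuation argument covers all of $[-T,T]$.

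I expect the main obstacle to be exactly the interplay between the factorial number of Duhamel terms and the $R^{2k}$ growth of the a priori bound: naively, the $n$-th iterate of the collision operator $B_{k+1}\cdots B_{k+n}$ produces roughly $k(k+1)\cdots(k+n-1)$ summands, and each application of \eqref{Star3} to a density matrix of order $k+n$ costs a factor growing with $k+n$, so the smallness $(CT^\epsilon)^n$ must be strong enough to beat this. The key simplification available here — and the reason the factorized setting is tractable without the full machinery of the derivation literature — is that $B_{k+1}\gamma_i^{(k+1)}$, for $\gamma_i$ of the factorized form, reduces to a sum of $k$ rank-one-in-each-variable pieces built from $\phi^{(i)}$ and $|\phi^{(i)}|^2\phi^{(i)}$, so one can hope to collapse the hierarchy estimate to a single scalar multilinear estimate for products of free evolutions, which is precisely what \eqref{Star1}--\eqref{Star2} provide; also, the $L^2$-conservation \eqref{NLS mild solution 3}, flagged in the remark after Lemma~\ref{Mild factorized solutions}, should enter to control the "unperturbed" $\phi$ factors uniformly. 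A secondary technical point is justifying all the Duhamel manipulations and Fubini exchanges in the low-regularity trace-class setting, but the low-regularity distributional framework set up in Remark~\ref{local_integrability} and the formal-but-well-defined collision operator conventions should handle this.
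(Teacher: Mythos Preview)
Your plan is essentially the paper's proof: set up the difference of the two factorized hierarchies, iterate Duhamel, use the tensor-product structure to reduce the collision terms to one-particle multilinear estimates governed by \eqref{Star1}--\eqref{Star3}, invoke the Klainerman--Machedon boardgame argument to replace the factorially many terms by $\leq 2^{k+r}$ representatives, and close via the a~priori bound $R^{2(k+r)}$ against $(CT^\epsilon)^r$ on a short interval.

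Three small corrections to your sketch. First, the difference is measured not in $\mathfrak{H}^s_k$ but in the \emph{negative} regularity norm $\tr\bigl|S^{(k,-\zeta_0)}\,\cdot\,\bigr|$: the distinguished branch of the tree carries a factor $|f|^2 f$, which for $s<d/2$ you can only place in $B^{-\zeta_0}_{2,1}$ (this is exactly the role of \eqref{Star3}), and \eqref{Star1} then propagates this low-regularity factor through the iteration; attempting to close in $\mathfrak{H}^s_k$ would require $\||f|^2 f\|_{H^s}\lesssim \|f\|_{H^s}^3$, which is false in the regime of interest. Second, \eqref{Star3} is therefore not a device to ``convert back'' to $H^s$-trace norm but rather the estimate at the innermost leaf of the distinguished tree. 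Third, $L^2$-conservation is \emph{not} used in the proof of this proposition; it enters only afterwards, in the Gronwall step that upgrades \eqref{factorized_GP_uniqueness2} to $\phi^{(1)}=\phi^{(2)}$.
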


We also note a multi-linear estimate which follows from Sobolev embedding.
\begin{proposition}
\label{Product estimate proposition}
Suppose that $\varrho \in (\frac{d}{4},\frac{d}{2})$ is given. Then we have for all $\delta>0$ 
\begin{equation*}
\|f_1 f_2 f_3\|_{H^{3 \varrho-d}} \lesssim_{\delta} \|f_1\|_{H^{\varrho+\delta}} \|f_2\|_{H^{\varrho+\delta}} \|f_3\|_{H^{\varrho+\delta}}\,.
\end{equation*}
\end{proposition}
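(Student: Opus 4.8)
The plan is to derive the estimate from two successive applications of the standard Sobolev product estimate on the compact domain $\mathbb{D}$, which we take from the general results recalled in Appendix \ref{Appendix A} (these in turn follow from Sobolev embedding, either by localization to $\R^d$ or intrinsically via the Littlewood--Paley pieces $P_N$). In the form we use it, $\|uv\|_{H^{s}} \lesssim \|u\|_{H^{s_1}}\|v\|_{H^{s_2}}$ holds whenever $s_1+s_2 \geq 0$, $s \leq \min(s_1,s_2)$ and $s_1 + s_2 - s > \tfrac d2$; all exponents we shall feed into it are $<\tfrac d2$, so no borderline issue arises. First I would reduce to the case where $\delta>0$ is small, which is harmless since $\|f_j\|_{H^{\varrho+\delta}}$ is nondecreasing in $\delta$; in particular we may assume $\varrho+\delta<\tfrac d2$.

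Next, introduce the intermediate regularity $\sigma := 2\varrho - \tfrac d2$. Because $\tfrac d4 < \varrho < \tfrac d2$ we have $0 < \sigma < \varrho < \varrho+\delta$ and $\sigma < \tfrac d2$, while $(\varrho+\delta)+(\varrho+\delta)-\sigma = \tfrac d2 + 2\delta > \tfrac d2$; hence the product estimate (in its non-negative-target form, since $\sigma>0$) gives
\[
\|f_1 f_2\|_{H^{\sigma}} \;\lesssim\; \|f_1\|_{H^{\varrho+\delta}}\,\|f_2\|_{H^{\varrho+\delta}}\,.
\]
Then I would apply the product estimate a second time with $u = f_1 f_2 \in H^{\sigma}$, $v = f_3 \in H^{\varrho+\delta}$ and target exponent $s = 3\varrho - d$: the hypotheses hold because $\sigma + (\varrho+\delta) = 3\varrho - \tfrac d2 + \delta > 0$ (as $\varrho>\tfrac d4$); because $3\varrho - d \leq 2\varrho - \tfrac d2 = \sigma$ (as $\varrho \leq \tfrac d2$) and $3\varrho - d \leq \varrho+\delta$ (as $2\varrho - d < 0 \leq \delta$), so $s \leq \min(\sigma,\varrho+\delta)$; and because $\sigma + (\varrho+\delta) - (3\varrho-d) = \tfrac d2 + \delta > \tfrac d2$. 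This yields
\[
\|f_1 f_2 f_3\|_{H^{3\varrho-d}} \;\lesssim\; \|f_1 f_2\|_{H^{\sigma}}\,\|f_3\|_{H^{\varrho+\delta}} \;\lesssim\; \|f_1\|_{H^{\varrho+\delta}}\,\|f_2\|_{H^{\varrho+\delta}}\,\|f_3\|_{H^{\varrho+\delta}}\,,
\]
which is the asserted inequality.

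The only point that needs a little care is that the target exponent $3\varrho - d$ may be negative — it is, precisely when $\varrho < \tfrac d3$ — so the second application must use the version of the product estimate that permits negative target regularity; on the compact domain $\mathbb{D}$ this reduces to the non-negative case by duality (using $\|uv\|_{H^s} = \sup_{\|w\|_{H^{-s}}\le 1}|\langle u, \bar v\, w\rangle|$ together with a product estimate for $\bar v\,w$), and it is among the embeddings recorded in Appendix \ref{Appendix A}. Apart from this, the argument is pure bookkeeping: the choice $\sigma = 2\varrho - \tfrac d2$ is essentially forced by demanding strict inequalities with a margin of order $\delta$ at both steps, and I expect no genuine obstacle.
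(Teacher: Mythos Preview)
Your proposal is correct and follows essentially the same route as the paper: two applications of a bilinear Sobolev product estimate with the intermediate regularity $\sigma = 2\varrho - \tfrac{d}{2}$, exactly as in the paper's proof via Lemma~\ref{Product estimate lemma 1}. The only cosmetic difference is that you phrase the bilinear input as a general product rule with the conditions $s\le \min(s_1,s_2)$ and $s_1+s_2-s>\tfrac{d}{2}$, whereas the paper states and proves the specific form $\|f_1f_2\|_{H^{\varrho_1+\varrho_2-d/2}}\lesssim_\delta \|f_1\|_{H^{\varrho_1+\delta}}\|f_2\|_{H^{\varrho_2+\delta}}$ for $\varrho_1,\varrho_2\in(0,\tfrac{d}{2})$; for the parameter values you feed in, these coincide (and the $\delta/2$ bookkeeping you implicitly absorb is made explicit in the paper).
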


Assuming Propositions \ref{factorized_GP_uniqueness} and \ref{Product estimate proposition} for now, we give the proof of Theorem \ref{Main_theorem}.

\begin{proof}[Proof of Theorem \ref{Main_theorem}]
By condition \eqref{NLS mild solution 3} of being a mild solution of \eqref{NLS}, we note that $\phi_0 = 0$ implies that $\phi^{(1)}(\,,t)=\phi^{(2)}(\,,t)=0$ for all $t \in [-T,T]$. Hence, we need to consider the case when $\phi_0 \neq 0$.

We note that \eqref{factorized_GP_uniqueness2} for $k=1$ implies that for all $x,x' \in \mathbb{D}$ we have

\begin{equation}
\notag
\phi^{(1)}(x,t) \, \overline{\phi^{(1)}(x',t)}=\phi^{(2)}(x,t) \, \overline{\phi^{(2)}(x',t)}
\end{equation}
We multiply both sides of the above identity with $\phi^{(2)}(x',t)$ and we integrate in $x'$ to deduce that
\begin{equation}
\notag
\phi^{(2)}(x,t) \, \|\phi^{(2)}(t)\|_{L^2_x}^2 = \phi^{(1)}(x,t) \, \langle \phi^{(2)}(t), \phi^{(1)}(t) \rangle_{L^2_x}\,.
\end{equation}
Hence, by \eqref{NLS mild solution 3} for $\phi^{(2)}$, we have
\begin{equation}
\label{phase a}
\phi^{(2)}(x,t)=a(t) \, \phi^{(1)}(x,t)\,,
\end{equation}
where
\begin{equation*}
a(t) \deq \frac{\langle \phi^{(2)}(t), \phi^{(1)}(t) \rangle_{L^2_x}}{\|\phi_0\|_{L^2_x}^2}\,.
\end{equation*}
Setting $t=0$, and recalling that $\phi^{(1)}(0)=\phi^{(2)}(0)=\phi_0$, it follows that
\begin{equation}
\label{a(0)}
a(0)=1.
\end{equation}
Substituting \eqref{phase a} into \eqref{NLS mild solution 3}, it follows that, for all $t \in [-T,T]$ we have
\begin{equation}
\label{modulus one phase}
|a(t)|=1.
\end{equation}
Let us note that \eqref{phase a} with $a$ satisfying conditions \eqref{a(0)} and \eqref{modulus one phase} is all that we can deduce on the level of solutions to \eqref{NLS} from Proposition \ref{factorized_GP_uniqueness}.
Namely, any $\phi^{(1)}$ and $\phi^{(2)}$ satisfying the above conditions will also satisfy \eqref{factorized_GP_uniqueness2}. 

Our goal now is to show that $a=1$. This claim holds trivially if $\phi^{(1)}$ and $\phi^{(2)}$ are classical solutions and if $a$ is differentiable in $t$. In our case, we have to work a bit more. We start from \eqref{NLS mild solution}. Using \eqref{phase a} and \eqref{modulus one phase}, we can rewrite \eqref{NLS mild solution} for $\phi^{(2)}$ as 
\begin{equation*}
a(t) \, \phi^{(1)}(x,t)= \ee^{\ii t\Delta} \phi_0 - \ii \, \int_0^{t}\ee^{\ii(t-\tau)\Delta} |\phi^{(1)}|^2 \phi^{(1)} (x,\tau) \, a(\tau) \,\dd \tau\,.
\end{equation*}
Taking differences of the above equality with \eqref{NLS mild solution} for $\phi^{(1)}$ we get
\begin{equation}
\label{difference equation}
(a(t)-1) \, \phi^{(1)}(x,t)=-\ii \, \int_0^{t} \ee^{\ii(t-\tau)\Delta} |\phi^{(1)}|^2\phi^{(1)}(x,\tau) \, (a(\tau)-1) \,\dd \tau\,.
\end{equation}
By recalling \eqref{Definition_of_s_0} and the assumption $s>s_0$, we can find $\varrho \in (\frac{d}{4},\frac{d}{2})$ such that $\varrho<s$. We fix such a parameter $\varrho$ for the remainder of the proof. Let us now estimate both sides of \eqref{difference equation} in the $H^{3\varrho-d}$ norm. We have
\begin{align*}
\notag
&\big\|(a(t)-1) \, \phi^{(1)}(t) \big\|_{H^{3\varrho-d}}=\bigg\|-\ii \, \int_0^{t} \ee^{\ii(t-\tau)\Delta} |\phi^{(1)}|^2\phi^{(1)}(\,,\tau) \, (1-a(\tau)) \,\dd \tau \bigg\|_{H^{3\varrho-d}}\\
&
\leq
\int_0^{t} \big\||\phi^{(1)}|^2\phi^{(1)}(\tau)\|_{H^{3\varrho-d}} \, |a(\tau)-1| \,\dd \tau
\lesssim_{\delta} \int_0^{t} \|\phi^{(1)}(t)\|_{H^{\varrho+\delta}}^3 \, |a(\tau)-1| \,\dd \tau\,.
\end{align*}
In the last step we applied Proposition \ref{Product estimate proposition}, which is possible by the assumptions on $\varrho$.
Hence, by \eqref{NLS mild solution 2} for $\phi^{(1)}$, we have for $\delta>0$ sufficiently small
\begin{align}
\notag
|a(t)-1| \, \|\phi^{(1)}(t)\|_{H^{3\varrho-d}} \lesssim \int_0^{t} \|\phi^{(1)}(\tau)\|_{H^s}^3 \, |a(\tau)-1| \,\dd \tau 
\\
\label{Gronwall 1}
\lesssim 
\int_0^{t} R^3 \, |a(\tau)-1| \,\dd \tau\,. 
\end{align}

The fact that $a =1$ follows from \eqref{Gronwall 1} by an application of Gronwall's inequality if we show that $\|\phi^{(1)}(x,t)\|_{H^{3s_0(d)-d}_x}$ is bounded from below. In order to do this, we need to consider two cases.

\begin{itemize}
\item[\textbf{Case 1:}] $3 \varrho - d \geq 0$.
\\
In this case we note that 
\begin{equation}
\label{Case 1 lower bound on v}
\|\phi^{(1)}(t)\|_{H^{3\varrho-d}} \geq \|\phi^{(1)}(t)\|_{L^2}=\|\phi_0\|_{L^2_x}\,,
\end{equation}
by using \eqref{NLS mild solution 3}.

\item[\textbf{Case 2:}] $3 \varrho -d<0$.
\\
In this case we interpolate. Namely, we know that $3 \varrho-d<0<s$, hence there exists $\theta \in (0,1)$ such that 
\begin{equation}
\notag
\|\phi^{(1)}(t)\|_{L^2} \leq \|\phi^{(1)}(t)\|_{H^{3\varrho-d}}^{\theta} \, \|\phi^{(1)}(t)\|_{H^s}^{1-\theta}\,.
\end{equation}
\end{itemize}
Using \eqref{NLS mild solution 2} and \eqref{NLS mild solution 3} we deduce that 
\begin{equation}
\label{Case 2 lower bound on v}
\|\phi^{(1)}(t)\|_{H^{3 \varrho-d}} \geq \bigg(\frac{\|\phi_0\|_{L^2_x}}{R^{1-\theta}}\bigg)^{1/\theta}\,.
\end{equation}
Combining \eqref{Gronwall 1}, \eqref{Case 1 lower bound on v}, and \eqref{Case 2 lower bound on v}, we deduce that
\begin{equation*}
|a(t)-1| \lesssim \int_0^{t} |a(\tau)-1|\,d\tau\,,
\end{equation*}
from where we conclude that $a = 1$. The result now follows.
\end{proof}
The remainder of this section is devoted to the proof of Proposition \ref{factorized_GP_uniqueness}. 
The proof of Proposition \ref{Product estimate proposition} is given in Appendix \ref{Appendix A} below. We first note several auxiliary facts, which are based on the arguments of \cite{ChHaPavSei}.
Given $t \in [-T,T]$ we let 
\begin{equation}
\label{dmu}
\dd \mu^{(1)}_t \deq \delta_{\phi^{(1)}(\,,t)}\,, \quad \dd 
\mu^{(2)}_t \deq \delta_ {\phi^{(2)}(\,,t)}\,.
\end{equation}
Note that these are Borel measures on $L^2(\mathbb{D})$.
Following \cite{ChHaPavSei}, we define for $k,r \in \mathbb{N}$ the set
\begin{multline}
\label{M_kr}
\mathcal{M}_{k,r} \deq \Big\{\sigma: \{k+1, k+2, \ldots, k+r\} \rightarrow \{1, 2, \ldots, k+r-1\}:\,
\\\sigma(j)<j \,\, \mbox{for all }j \in \{k+1, k+2, \ldots, k+r\}  \Big\}\,,
\end{multline}
and the quantity
\begin{align}
\notag
&J^k(f;\sigma;t,t_1,\ldots,t_r; \vec{x}_k;\vec{x}_k') \deq 
\mathcal{U}^{(k)} (t-t_1) \, B_{\sigma(k+1), \,k+1} \, \mathcal{U}^{(k+1)}(t_1-t_2) \, B_{\sigma(k+2),\,k+2}
\\
\label{J^k}
&\cdots \, \mathcal{U}^{(k+r-1)} (t_{r-1}-t_r) \, B_{\sigma(k+r),\,k+r} \big( |f \rangle \langle f|^{\otimes (k+r)}\big) (\vec{x}_k;\vec{x}'_k)\,.
\end{align}

\begin{lemma} 
\label{sigma_bound}
Let $s>s_0$. There exists a constant $C>0$ depending only on the domain $\mathbb{D}$ and $s$ such that for $\sigma \in \mathcal{M}_{k,r}, t \in [-T,T]$ and $q \in \{1,2\}$ we have
\begin{align}
\notag
&\int_{[0,t]^r} \int_{L^2(\mathbb{D})} \dd \mu_{t_r}^{(q)}(f) \tr \big|S^{(k,-\zeta_0)} J^k(f;\sigma;t,t_1,\ldots,t_r;\vec{x}_k;\vec{x}_k')\big| \,\dd t_r \, \cdots \, \dd t_1 
\\
\label{sigma_bound_claim}
&\leq C^r t^{\epsilon(r-1)} \, \int_{0}^{t} \|\phi^{(q)}(\,,t_r)\|_{H^s}^{2(k+r)}\,\dd t_r\,.
\end{align}
Here $\epsilon \equiv \epsilon(\mathbb{D})$ is the parameter from Definition \ref{Admissible_domain}.
\end{lemma}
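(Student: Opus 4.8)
The plan is to expand the trace norm of $J^k(f;\sigma;\cdots)$ by repeatedly peeling off one collision operator $B_{\sigma(j),j}$ at a time, using the multilinear estimates \eqref{Star1}, \eqref{Star2} built into the definition of an admissible domain. First I would note that, since $|f\rangle\langle f|^{\otimes(k+r)}$ is a rank-one tensor and the free evolutions $\mathcal{U}^{(k+j)}$ preserve this factorized structure, each application of $B_{\sigma(k+j),\,k+j}$ contracts two of the factors into a product of three functions of the form $(\ee^{\ii(t_{j-1}-t_j)\Delta}f)$ — two holomorphic and one antiholomorphic, or vice versa — sitting in one slot, while the remaining factors stay as free evolutions of $f$. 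Consequently $J^k(f;\sigma;\cdots)$ is again a (signed sum of) rank-one tensors whose individual factors are, in each of the $k$ left/right pairs of variables, either $\ee^{\ii(t-\tau)\Delta}f$ for the appropriate intermediate time $\tau$, or a triple product $u_1u_2u_3$ with $u_i=\ee^{\ii(t_{j-1}-t_j)\Delta}f$. The trace norm of a tensor product factorizes, $\tr|A\otimes B|=\tr|A|\cdot\tr|B|$, so $\tr|S^{(k,-\zeta_0)}J^k|$ becomes a product over the $k$ slots of one-particle quantities.

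Next I would set up the estimate slot by slot. For a slot that never gets hit by a collision, the contribution is $\|\ee^{\ii t'\Delta}f\|_{L^2}^2=\|f\|_{L^2}^2\le\|f\|_{H^s}^2$. For the (at most one, for a fixed $\sigma$, after accounting for the tree structure — actually several, since $r$ contractions land among $k+r-1$ available slots) slots where contractions occur, one applies \eqref{Star1} to the slot carrying the $B^{-\zeta_0}$-type weight coming from $S^{(k,-\zeta_0)}$ and \eqref{Star2} to the slots carrying positive $B^{\zeta}_{2,1}$-regularity, integrating in the corresponding time variable to produce the factor $t^\epsilon$ each time. Crucially, the nesting of time integrals in \eqref{J^k} — the $t_j$ ranges over $[0,t_{j-1}]$ — matches the iterated structure, and since there are $r$ collision operators but the outermost time integral $\dd t_1$ is absorbed into the final $\int_0^t\cdots\dd t_r$ on the right-hand side, one gains $r-1$ factors of $t^\epsilon$, giving the claimed $t^{\epsilon(r-1)}$. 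Collecting all slots, the $H^s$-norms multiply to the total power $2(k+r)$ (there are $k+r$ original factors $f$, each appearing once holomorphically or antiholomorphically), and the constants from \eqref{Star1}, \eqref{Star2} combine into $C^r$. Passing from $\|f\|_{H^s}^{2(k+r)}$ to $\|\phi^{(q)}(\cdot,t_r)\|_{H^s}^{2(k+r)}$ is immediate since $\dd\mu^{(q)}_{t_r}$ is the Dirac mass at $\phi^{(q)}(\cdot,t_r)$, so the $\dd\mu^{(q)}_{t_r}$-integral just evaluates the integrand at $f=\phi^{(q)}(\cdot,t_r)$.

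The main obstacle I anticipate is bookkeeping: tracking which factors are holomorphic versus antiholomorphic through each contraction, verifying that exactly one slot inherits the negative-regularity weight $S^{(k,-\zeta_0)}$ while the others can be estimated at positive regularity $\zeta>\zeta_0$ (so that \eqref{Star2} applies there and the $B^{\zeta}_{2,1}$ norms can be bootstrapped back to $H^s$ via \eqref{Inclusion}, using $s>s_0\ge\zeta_0$ and choosing $\zeta\in(\zeta_0,s)$), and confirming that the combinatorics of $\mathcal{M}_{k,r}$ is compatible with this one-slot-negative structure along the branch of the collision tree. A secondary point is that the collision operators are only formally defined on density matrices, so one must justify — as promised after \eqref{Collision_Operator_2}, and consistently with Remark \ref{local_integrability} — that on the factorized class the contractions reduce to the honest pointwise products $u_1u_2u_3$ to which \eqref{Star1}–\eqref{Star2} literally apply; this is where the Besov spaces $B^{s}_{2,1}$ rather than $H^s$ are needed, via the embeddings \eqref{Inclusion}, to control the one-dimensional trace-class norms by $L^2$-norms of dyadic pieces. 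Once the slot-by-slot estimate is in place, summing the dyadic decompositions and reassembling gives \eqref{sigma_bound_claim}.
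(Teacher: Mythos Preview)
Your overall plan---factorize $J^k$ into a product of $k$ one-particle kernels (``trees'') and estimate each tree recursively via the trilinear bounds of Definition~\ref{Admissible_domain}---matches the paper's approach, which follows \cite{ChHaPavSei}. However, there are two genuine gaps.

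First, you misidentify where the negative regularity sits. The operator $S^{(k,-\zeta_0)}=\prod_{j=1}^k\langle\Delta_{x_j}\rangle^{-\zeta_0/2}\langle\Delta_{x_j'}\rangle^{-\zeta_0/2}$ puts the weight $-\zeta_0$ on \emph{every} one of the $k$ slots, not just one; each of the $k$ trees $J^1_j$ is estimated in $\tr\big|S^{(1,-\zeta_0)}\,\cdot\,\big|$. The relevant dichotomy is not among the $k$ slots but \emph{within} the recursive estimate of a single tree: at each node one child branch is kept at $B^{-\zeta_0}_{2,1}$ (via \eqref{Star1}) while the other is promoted to $B^{\zeta}_{2,1}$ (via \eqref{Star2}). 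For the ``regular'' trees both branches can in fact be promoted immediately to $B^{\zeta}_{2,1}$ (since $\|\cdot\|_{H^{-\zeta_0}}\lesssim\|\cdot\|_{B^{\zeta}_{2,1}}$), and only \eqref{Star2} is needed there.

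Second, and more seriously, you omit estimate \eqref{Star3} entirely. Among the $k$ trees exactly one---the \emph{distinguished} tree in the terminology of \cite{ChHaPavSei}---contains at its leaf the factor $|f|^2 f$ produced by the innermost collision $B_{\sigma(k+r),k+r}$ acting on $|f\rangle\langle f|^{\otimes(k+r)}$. This factor sits at the \emph{fixed} time $t_r$, so neither \eqref{Star1} nor \eqref{Star2} (which are $L^1_t$ estimates on products of free evolutions) applies to it; one needs the purely spatial bound $\||f|^2 f\|_{B^{-\zeta_0}_{2,1}}\lesssim\|f\|_{H^s}^3$, and that is precisely \eqref{Star3}, available because $s>s_0\ge\alpha_0$. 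This is also why the distinguished tree contributes only $t^{(m_j-1)\epsilon}$ rather than $t^{m_j\epsilon}$, and why the time variable singled out on the right of \eqref{sigma_bound_claim} is $t_r$ (the innermost one, on which $\dd\mu^{(q)}_{t_r}$ depends), not $t_1$ as you wrote. Without \eqref{Star3} the recursion does not close at that leaf, and the argument as you have outlined it cannot be completed.
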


\begin{proof}[Proof of Lemma \ref{sigma_bound}]
We can rewrite \eqref{J^k} as a product of $1$-particle kernels as in \cite[(4.26)]{ChHaPavSei}
\begin{equation}
\label{J^k_1}
J^k(f;\sigma;t,t_1,\ldots,t_r;\vec{x}_k;\vec{x}_k') = \prod_{j=1}^{k} J^1_j (f;\sigma_j; t, t_{\ell_{j,1}}, \ldots, t_{\ell_{j,m_j}};x_j;x_j')\,.
\end{equation}
For the precise definitions and explanations of the notation, we refer the reader to the discussion in \cite[Section 5]{ChHaPavSei}.
Following the terminology of \cite{ChHaPavSei}, one of the factors on the right-hand side of \eqref{J^k_1} is \emph{distinguished}, whereas the other $k-1$ are \emph{regular}. The distinguished term contains a factor of $|f|^2f$.

Let us first explain the modifications needed to analyze the contribution from the distinguished factor.
We change the expression in \cite[eq. (6.4)]{ChHaPavSei} to \footnote{We change the convention from \cite{ChHaPavSei} and denote elements of $L^2(\mathbb{D})$ by $f$ instead of $\phi$.}
\begin{equation*}
\int_{[0,t]^{m_j-1}} \tr \big|S^{(1,-\zeta_0)}J_j^1(f;\sigma_j;t,t_{\ell_{j,1}}, \ldots, t_{\ell_{j,m_j}})\big| \, \dd t_{\ell_{j,m_j-1}} \, \dd t_{\ell_{j,m_j-2}} \cdots \, \dd t_{\ell_{j,1}}\,.
\end{equation*}

Therefore, the expression in \cite[eq. (7.1)]{ChHaPavSei} gets changed to
\begin{equation*}
\sum_{\beta_1} \mathop{\int}_{[0,t]^{m_j-1}} \|\psi_{\beta_1}^{1}\|_{H^{-\zeta_0}} \, \|\chi_{\beta_1}^{1}\|_{H^{-\zeta_0}} \, \dd t_{\ell_{j,m_j-1}} \,\dd t_{\ell_{j,m_j-2}} \cdots \, \dd t_{\ell_{j,1}}\,
\end{equation*}
which is
\begin{equation}
\notag
\leq \sum_{\beta_1} \mathop{\int}_{[0,t]^{m_j-1}} \|\psi_{\beta_1}^1\|_{B^{-\zeta_0}_{2,1}} \, \|\chi_{\beta_1}^1\|_{B^{\zeta}_{2,1}} \,dt_{\ell_{j,m_j-1}} \,dt_{\ell_{j,m_j-2}} \,\cdots \,dt_{\ell_{j,1}}
\end{equation}
for some $\zeta \in (\zeta_0,s)$, which we take to be fixed for the remainder of the proof. Note that, in the last step we used \eqref{Inclusion}.
 
The key point is that we can now also change what were originally the \emph{bound on the $L^2$ level} \cite[eq. (7.4)]{ChHaPavSei} and the \emph{bound on the $H^1$ level} \cite[eq. (7.5)]{ChHaPavSei} by using the estimates that hold by assumption from Definition \ref{Admissible_domain}. In particular, we can use \eqref{Star1} and \eqref{Star2} from Definition \ref{Admissible_domain} and replace \cite[eq. (7.4)]{ChHaPavSei}  with

\begin{align}
\notag
&\int_{[0,t]^{d_a}} \|\psi_{\beta_a}^{a}\|_{B^{-\zeta_0}_{2,1}} \, \|\chi_{\beta_a}^{a}\|_{B^{\zeta}_{2,1}} \prod_{a' \in \tau_{j,a}} \,\dd t_{\ell_{j,a'}}
\\
\notag
&\lesssim t^{\,\epsilon} \, \bigg( \int_{[0,t]^{d_{\kappa_{-}(a)}}} \|\psi_{\beta_{\kappa_{-}(a)}}^{\kappa_{-}(a)}\|_{B^{\zeta}_{2,1}} \, \|\chi_{\beta_{\kappa_{-}(a)}}^{\kappa_{-}(a)}\|_{B^{\zeta}_{2,1}} \prod_{a' \in \tau_{j,\kappa_{-}(a)}} \, \dd t_{\ell_{j,a'}} \bigg)
\\
\label{Bound_on_the_low_regularity_level}
&\times \bigg( \int_{[0,t]^{d_{\kappa_{+}(a)}}} \|\psi_{\beta_{\kappa_{+}(a)}}^{\kappa_{+}(a)}\|_{B^{-\zeta_0}_{2,1}} \, \|\chi_{\beta_{\kappa_{+}(a)}}^{\kappa_{+}(a)}\|_{B^{\zeta}_{2,1}} \prod_{a' \in \tau_{j,\kappa_{+}(a)}} \, \dd t_{\ell_{j,a'}} \bigg)\,.
\end{align}
We always apply this estimate in the case where $\psi_{\beta_{\kappa_{+}(a)}}^{\kappa_{+}(a)}$ contains the factor $|f|^2f$.
Furthermore, we can use \eqref{Star2} and replace \cite[eq. (7.5)]{ChHaPavSei} with
\begin{align}
\notag
&\int_{[0,t]^{d_a}} \|\psi_{\beta_a}^{a}\|_{B^{\zeta}_{2,1}} \, \|\chi_{\beta_a}^{a}\|_{B^{\zeta}_{2,1}} \prod_{a' \in \tau_{j,a}} \,\dd t_{\ell_{j,a'}}
\\
\notag
&\lesssim t^{\,\epsilon} \, \bigg( \int_{[-T,T]^{d_{\kappa_{-}(a)}}} \|\psi_{\beta_{\kappa_{-}(a)}}^{\kappa_{-}(a)}\|_{B^{\zeta}_{2,1}} \, \|\chi_{\beta_{\kappa_{-}(a)}}^{\kappa_{-}(a)}\|_{B^{\zeta}_{2,1}} \prod_{a' \in \tau_{j,\kappa_{-}(a)}} \, \dd t_{{\ell_{j,a'}}} \bigg)
\\
\label{Bound_on_the_high_regularity_level}
&\times \bigg( \int_{[0,t]^{d_{\kappa_{+}(a)}}} \|\psi_{\beta_{\kappa_{+}(a)}}^{\kappa_{+}(a)}\|_{B^{\zeta}_{2,1}} \, \|\chi_{\beta_{\kappa_{+}(a)}}^{\kappa_{+}(a)}\|_{B^{\zeta}_{2,1}} \prod_{a' \in \tau_{j,\kappa_{+}(a)}} \, \dd t_{{\ell_{j,a'}}} \bigg)\,.
\end{align}
Finally, we note that we replace applications of \cite[eq. (7.2)]{ChHaPavSei}), i.e.\
\begin{equation}
\notag
\||f|^2 f\|_{L^2} \lesssim \|f\|_{H^1}^3
\end{equation}
by \eqref{Star3} of Definition \ref{Admissible_domain}, which implies that 
\begin{equation}
\label{Estimate on the nonlinearity application}
\||f|^2 f\|_{B^{-\zeta_0}_{2,1}} \lesssim \|f\|_{H^s}^3\,. 
\end{equation}
Using \eqref{Bound_on_the_low_regularity_level}, \eqref{Bound_on_the_high_regularity_level}, and \eqref{Estimate on the nonlinearity application}, it follows that the bound \cite[eq. (8.1)]{ChHaPavSei} for the distinguished tree is replaced with

\begin{align}
&\mathop{\int}_{[0,t]^{m_j-1}} \tr \big|S^{(1,-\zeta_0)}J_j^1(f;\sigma_j;t,t_{\ell_{j,1}},\ldots,t_{\ell_{j,m_j}})\big| \, dt_{\ell_{j,m_j-1}} \cdots \, dt_{\ell_{j,1}}
\\
\label{Distinguished tree bound}
&\leq C_1^{m_j} \, t^{(m_j-1) \epsilon} \, \|f\|_{B^{\zeta}_{2,1}}^{2m_j-1} \, \|f\|_{H^s}^3 \leq C^{m_j} \, t^{(m_j-1) \epsilon} \, \|f\|_{H^s}^{2m_j+2}\,,
\end{align}
where $C_1,C>0$ depend on $\mathbb{D}$ and $s$. In the last step we used \eqref{Inclusion}.

The contribution from each regular tree, i.e.\  the bound \cite[eq. (8.6)]{ChHaPavSei} is replaced with

\begin{align}
\notag
\mathop{\int}_{[0,t]^{m_j}}  \tr \big|S^{(1,-\zeta_0)} J_j^1(f;\sigma_j;t,t_{\ell_{j,1}},\ldots,t_{\ell_{j,m_j}})\big| \, dt_{\ell_{j,m_j}} \,dt_{\ell_{j,m_j-1}} \cdots \,dt_{\ell_{j,1}}
\\
\label{Regular tree bound}
\leq C_2^{m_j} \, t^{m_j  \epsilon} \, \|f\|_{B^{\zeta}_{2,1}}^{2m_j+2}
\leq C^{m_j} \, t^{m_j  \epsilon} \, \|f\|_{H^s}^{2m_j+2}\,,
\end{align}
where $C_2,C>0$ depend on $\mathbb{D}$ and $s$.
In order to obtain \eqref{Regular tree bound}, one applies analogous modifications to those needed to obtain \eqref{Distinguished tree bound}. The difference is that one now only uses \eqref{Star2} and does not need to use \eqref{Star1} and \eqref{Star3}.

Using \eqref{Distinguished tree bound} and \eqref{Regular tree bound} in \eqref{J^k_1}, it follows that the left-hand side of \eqref{sigma_bound_claim} is 
\begin{equation*}
\leq C^r t^{\epsilon(r-1)} \, \int_{0}^{t} \int_{L^2(\mathbb{D})} \dd \mu_t^{(q)}(f) \|f\|_{H^s}^{2(k+r)}\,\dd t_r
 =C^r t^{\epsilon(r-1)} \, \int_{0}^{t} \|\phi^{(q)}(\,,t_r)\|_{H^s}^{2(k+r)}\,\dd t_r\,.
\end{equation*}
\end{proof}

The \emph{boardgame argument} of \cite[Section 3]{KM} gives an explicit equivalence relation $\sim$ on the set 
$\mathcal{M}_{k,r}$ defined in \eqref{M_kr} above.
Furthermore, there exists a complete set of representatives $\mathcal{N}_{k,r}$ with the property that
\begin{equation}
\label{N_kr_bound}
\sharp \,\mathcal{N}_{k,r} \leq 2^{k+r}\,.
\end{equation}
We omit the details of the construction and refer the reader to \cite[Section 3]{KM}.

\begin{lemma} 
\label{Boardgame_argument}For $\sigma_0 \in \mathcal{N}_{k,r}$, there exists $D(\sigma_0,t) \subset [0,t]^r$ such that for $q=1,2$, we have
\begin{multline}
\label{boardgame_argument_claim}
\mathop{\sum_{\sigma \in \mathcal{M}_{k,r}}}_{\sigma \sim \sigma_0} \int_0^t \int_0^{t_1} \cdots \int_0^{t_{r-1}}  \int_{L^2(\mathbb{D})}\, \dd \mu_{t_r}^{(q)}(f) J^k(f;\sigma;t,t_1,\ldots,t_r)\,\, \dd t_r \, \cdots \, \dd t_1
\\
= \int_{D(\sigma_0,t)} \int_{L^2(\mathbb{D})}\, \dd \mu_{t_r}^{(q)}(f)  J^k(f;\sigma_0;t,t_1,\ldots,t_r) \,\, \dd t_r \, \cdots \, \dd t_1.
\end{multline}
\end{lemma}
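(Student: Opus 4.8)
The plan is to follow the \emph{boardgame argument} of \cite[Section 3]{KM}, adapting it to the present functional-analytic setting in which the ``upper endpoint'' of the iterated time integrals is the factorized solution of \eqref{GP hierarchy} evaluated against the Dirac measure $\dd\mu^{(q)}_{t_r}$. The starting point is the classical observation that any $\sigma\in\mathcal{M}_{k,r}$ is determined, up to the equivalence relation $\sim$, by a combinatorial datum encoding the order in which the collision operators $B_{\sigma(k+j),k+j}$ act on the various ``particle slots''; the key analytic input is that the free evolutions $\mathcal{U}^{(\ell)}$ are unitary and that, for fixed times, the operators $B_{j,\ell}$ acting on $|f\rangle\langle f|^{\otimes(\ell)}$ commute appropriately with relabeling of the dummy integration variables $t_1,\dots,t_r$. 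I would first record that, since all the operators in \eqref{J^k} are, after the reductions in Lemma \ref{sigma_bound}, applied to the rank-one object $|f\rangle\langle f|^{\otimes(k+r)}$ and then integrated against $\dd\mu^{(q)}_{t_r}$, Fubini's theorem is legitimate on the product of the time simplices (the absolute integrability being exactly \eqref{sigma_bound_claim}); this lets me freely permute integration variables, which is what the boardgame argument requires.

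The main steps, in order, would be: (1) recall from \cite[Section 3]{KM} the definition of the ``$KM$-boards'' and the fact that two maps $\sigma,\sigma'\in\mathcal{M}_{k,r}$ are equivalent, $\sigma\sim\sigma'$, precisely when they yield identical integrands after a measure-preserving change of variables in the time simplex $\{0\le t_r\le\cdots\le t_1\le t\}$; (2) for a fixed representative $\sigma_0\in\mathcal{N}_{k,r}$, enumerate the $\sigma\sim\sigma_0$ and, for each, apply the corresponding permutation $\pi_\sigma$ of $\{1,\dots,r\}$ to the integration variables $t_1,\dots,t_r$, noting that the innermost measure $\dd\mu^{(q)}_{t_r}$ is pulled back to $\dd\mu^{(q)}_{t_{\pi_\sigma(r)}}$ but, since after the change of variables the new ``last'' variable plays the role of $t_r$, the measure is still evaluated at the correct time; (3) observe that each such change of variables maps the simplex $\{0\le t_{\pi_\sigma^{-1}(r)}\le\cdots\le t_{\pi_\sigma^{-1}(1)}\le t\}$ bijectively onto a subregion of $[0,t]^r$, and that these subregions, as $\sigma$ ranges over the class of $\sigma_0$, tile (up to measure zero) a set which we call $D(\sigma_0,t)$; (4) sum over $\sigma\sim\sigma_0$ and conclude that the left-hand side of \eqref{boardgame_argument_claim} equals the integral of the single integrand $J^k(f;\sigma_0;\cdots)$ over $D(\sigma_0,t)$.

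The one point requiring genuine care — and the main obstacle — is verifying that the reduction to a single representative $\sigma_0$ is compatible with the presence of the external measure $\dd\mu^{(q)}_{t_r}(f)$ and the integrand being $J^k(f;\sigma;\cdots)$ rather than a scalar: in \cite{KM} the argument is run at the level of density-matrix-valued integrands, but the measure $\dd\mu^{(q)}_{t_r}$ depends on the \emph{innermost} time variable, so I must check that under the boardgame permutations the variable that ends up innermost is always the same one that carries the $\mu$-dependence. This is true because in the expansion \eqref{boardgame_argument_claim} the measure is attached to the terminal time $t_r$ of the Duhamel iteration and the boardgame change of variables is precisely a reordering of the \emph{intermediate} times that preserves which slot is terminal; I would make this explicit by tracking, through the identification \eqref{J^k_1} into one-particle kernels, that the factor $|f|^2f$ (i.e.\ the terminal collision) sits in the distinguished tree and its time argument is unaffected by the equivalence $\sim$. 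Granting this, the remaining bookkeeping — that the tiling regions $D(\sigma_0,t)$ are well-defined and that \eqref{N_kr_bound} bounds the number of representatives — is exactly \cite[Section 3]{KM}, which we invoke directly.
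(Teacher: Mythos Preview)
Your general plan—invoke the Klainerman--Machedon boardgame combinatorics of \cite[Section 3]{KM} and use Lemma \ref{sigma_bound} for absolute integrability—is the right outline, but you have misidentified the genuine obstacle, and as a result the actual gap in your argument goes unaddressed.

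The difficulty is \emph{not} tracking the measure $\dd\mu^{(q)}_{t_r}$ through the boardgame permutations. In the expression $J^k(f;\sigma;t,t_1,\dots,t_r)$ the function $f$ simply occupies the terminal tensor slot $|f\rangle\langle f|^{\otimes(k+r)}$; the boardgame moves act on the string of $B$-operators and $\mathcal{U}$-propagators and leave that terminal slot where it is. Once the KM identity holds for fixed $f$, integrating against $\dd\mu^{(q)}_{t_r}(f)$ is immediate. Your paragraph about the ``terminal collision sitting in the distinguished tree'' conflates the time argument of $\dd\mu^{(q)}$ with the distinguished/regular tree dichotomy from Lemma \ref{sigma_bound}; these are unrelated, and the argument you sketch there does not actually resolve anything.

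The real issue, and the one the paper flags, is \emph{spatial regularity}: the commutation relation \cite[eq.\ (37)]{KM} is proved by manipulating operator kernels involving delta functions, and this is only rigorous when the density matrices have smooth kernels. For $f=\phi^{(q)}(\cdot,t_r)\in H^s$ with $s$ barely above $s_0$, one cannot simply quote \cite{KM}. Your appeal to Fubini via \eqref{sigma_bound_claim} handles integrability in time but says nothing about why the pointwise operator identity underlying the boardgame move is valid at this regularity. The paper's fix is to approximate $\phi^{(q)}$ by smooth space--time functions $F_m\to\phi^{(q)}$ in $L^{2(k+r)}_{[0,t]}H^s_x$, run the KM argument verbatim for the smooth $F_m$ (where kernel manipulations are justified), and then pass to the limit $m\to\infty$; convergence of both sides of \eqref{boardgame_argument_claim} is controlled precisely by the estimate in Lemma \ref{sigma_bound}. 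Your proposal omits this approximation--limit step entirely, so as written it does not give a rigorous proof.
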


\begin{proof}[Proof of Lemma \ref{Boardgame_argument}]
This claim follows from the analysis of \cite[Section 3]{KM}. A subtle point that one should address is that this proof relies on the commutation relation \cite[eq. (37)]{KM}, which in turn relies on considering operator kernels. To fully justify the application of this commutation relation, we work with functions which are smooth in the spatial variables and conclude \eqref{boardgame_argument_claim} by density. More precisely, we find a sequence $F_m$ of smooth functions on $[0,t]  \times \mathbb{D}$ which converge to $\phi^{(q)}$ in $L^{2(k+r)}_{[0,t]} H^s_x$. Here we think of $\phi^{(q)}$ as a function of space and time. For $\tau \in [0,t]$, 
we let $\dd \nu_\tau^{(m)} \deq \delta_{F_m(\,,\tau)}$. Then, since for all $\tau \in [0,t]$, the function $F(\,, \tau)$ is smooth in $x$, the proof of \cite[Lemma 3.1]{KM} carries over and allows us to deduce the analogue of \eqref{boardgame_argument_claim} with $\dd \mu^{(q)}$ replaced by $\dd \nu^{(m)}$. The claim now follows by Lemma \ref{sigma_bound} and letting $m \rightarrow \infty$.
\end{proof}
We can now prove Proposition \ref{factorized_GP_uniqueness}.
\begin{proof}[Proof of Proposition \ref{factorized_GP_uniqueness}] 
Given $k \in \mathbb{N}$ we define
\begin{equation*}
\gamma^{(k)}(\vec{x}_k;\vec{x}'_k,t):=\big|\phi^{(1)} \rangle \langle \phi^{(1)}\big|^{\otimes k} (\vec{x}_k;\vec{x}'_k,t)\,,\widetilde{\gamma}^{(k)}(\vec{x}_k;\vec{x}'_k,t):=\big|\phi^{(2)} \rangle \langle \phi^{(2)}\big|^{\otimes k} (\vec{x}_k;\vec{x}'_k,t)\,.
\end{equation*}
By Lemma \ref{Mild factorized solutions}, we know that $(\gamma^{(k)})_k,(\widetilde{\gamma}^{(k)})_k$ are mild solutions of \eqref{GP hierarchy} in $\mathfrak{H}^{s}$ with the same initial data.
In particular
\begin{equation*}
\widehat{\gamma}^{(k)} \deq \gamma^{(k)}-\widetilde{\gamma}^{(k)}\,,
\end{equation*}
is a mild solution of \eqref{GP hierarchy} in $\mathfrak{H}^{s}$ with zero initial data.  
It suffices to show that
\begin{equation}
\label{gamma_hat_claim}
\tr \big| S^{(k,-\zeta_0)} \widehat{\gamma}^{(k)} \big| = 0 \,\quad \mbox{on } [-T,T]\,,
\end{equation}
for all $k \in \mathbb{N}$ provided that we choose $T$ to be sufficiently small depending on $R$ (from Definition \ref{Mild Solution of the NLS}). The claim for general $T$ follows by an iteration argument.

Recalling \eqref{dmu}, we define
\begin{equation}
\label{dmu_hat}
\dd \widehat{\mu}_t \deq \dd \mu^{(1)}_t-\dd \mu^{(2)}_t\,.
\end{equation}
By applying an iterated Duhamel expansion, we have that for all $k,r \in \mathbb{N}$ and $t \in [-T,T]$
\begin{multline}
\widehat{\gamma}^{(k)}(t) = (-\ii)^r\, \sum_{\sigma \in \mathcal{M}_{k,r}} \int_0^t \int_{0}^{t_1} \cdots \int_{0}^{t_{r-1}} \int_{L^2(\mathbb{D})} 
\dd \widehat{\mu}_{t_r} (f) 
\\
\label{Duhamel_expansion}
J^k(f;\sigma;t,t_1,\ldots,t_r; \vec{x}_k;\vec{x}_k')\, \dd t_r \, \cdots \, \dd t_1 \,,
\end{multline}
for $\mathcal{M}_{k,r}$ as in \eqref{M_kr} and $J^k$ as in \eqref{J^k}.
Using Lemma \ref{Boardgame_argument} and \eqref{dmu_hat} we can rewrite the expression on the right hand side of \eqref{Duhamel_expansion} as
\begin{equation}
\label{Duhamel_expansion2}
 (-\ii)^r \, \sum_{\sigma_0 \in \mathcal{N}_{k,r}} \int_{D(\sigma_0,t)} \int_{L^2(\mathbb{D})}\, \dd \widehat{\mu}_{t_r}(f)  J^k(f;\sigma_0;t,t_1,\ldots,t_r) \,\, \dd t_r \, \cdots \, \dd t_1\,.
\end{equation}
Using the triangle inequality, \eqref{N_kr_bound}, Lemma \ref{sigma_bound}, and the assumptions on $\phi^{(1)},\phi^{(2)}$ in the expression \eqref{Duhamel_expansion2}, it follows that for $t \in [-T,T]$ we have
\begin{equation*}
\tr \big| S^{(k,-\zeta_0)} \widehat{\gamma}^{(k)} \big| \leq 2^{k+r+1} C^r t^{\epsilon(r-1) + 1 } R^{2(k+r)} = 2^{k+1} t^{1-\epsilon} R^{2k} \big(2CR^2 t^{\epsilon} \big)^r\,.
\end{equation*}
We deduce \eqref{gamma_hat_claim} by letting $r \rightarrow \infty$.
\end{proof}

\begin{remark}
\label{Previous_paper}
Note that the proof of Proposition \ref{factorized_GP_uniqueness} implies that mild solutions of \eqref{GP hierarchy} in $\mathfrak{H}^s$ of the form 
\begin{equation*}
\gamma^{(k)}(t) = \int_{L^2(\mathbb{D})} \, \dd \mu_t (f) \, |f \rangle \langle f |^{\otimes k}
\end{equation*}
for $\dd \mu_t$ a Borel measure on $L^2(\mathbb{D})$ supported on a ball centered at zero of radius independent of $t$, are uniquely determined by their initial data provided that $s>s_0$. In light of Proposition \ref{Torus_admissibility} below, this improves on our earlier uniqueness results for such solutions on rectangular tori \cite{HerrSohinger}.
\end{remark}
\begin{remark}
Note that, in the proof of Theorem \ref{Main_theorem}, we never directly use the quantum de Finetti theorem. For our purposes, it suffices to apply the conclusion of this theorem concerning the structure of density matrices. This, in turn, allows us to apply the techniques developed in \cite{ChHaPavSei}. In doing so we work in $L^2$-based Besov spaces, which allows us to avoid the issues that arise when working in $L^p$-based spaces. The latter approach was taken on $\mathbb{R}^d$ in \cite{HTX,HTX2} and was based on the stronger dispersive properties available on the whole space.
\end{remark}
\section{Admissibility of the rectangular torus of dimension $d \geq 2$}
\label{torus_admissible}

In this section we consider the general rectangular torus in $d \geq 2$ dimensions, which we write as

\begin{equation*}
\mathbb{T}^d_g=\Big(\mathbb{R} \big/\,\frac{2\pi}{\theta_1} \mathbb{Z}\Big) \times
\Big(\mathbb{R} \big/\,\frac{2\pi}{\theta_2} \mathbb{Z}\Big) \times \cdots \times \Big(\mathbb{R} \big/\,\frac{2\pi}{\theta_d} \mathbb{Z}\Big)
\end{equation*}
for some $\theta_1,\theta_2,\ldots,\theta_d>0$ and we show that it is admissible in the sense of Definition \ref{Admissible_domain} above.
\begin{proposition}
\label{Torus_admissibility}
$\mathbb{D}=\mathbb{T}^d_g$ is admissible with the following parameters:

\begin{equation}
\label{Definition of sigma}
\zeta_0=
\begin{cases}
\frac{d  (d-1)}{2(d+2)}\,\,\, &\mbox{for } 2 \leq d \leq 4 \\
\frac{d}{2}-1\,\,\, &\mbox{for }d \geq 5\,.
\end{cases}
\end{equation}
\begin{equation}
\label{Definition of alpha}
\alpha_0=
\begin{cases}
\frac{d  (d+5)}{6(d+2)}\,\,\, &\mbox{for }2 \leq d \leq 4\\
\frac{d}{6}+\frac{1}{3}\,\,\, &\mbox{for }d \geq 5\,.
\end{cases}
\end{equation}
\begin{equation}
\label{Definition of epsilon(d)}
\epsilon=
\begin{cases}
\Big(\frac{4-d}{2(d+2)}\Big)+\,\,\, &\mbox{for } 2 \leq d \leq 4\\
\quad 0+\,\,\, &\mbox{for } d \geq 5\,.
\end{cases}
\end{equation}

\end{proposition}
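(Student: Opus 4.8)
The plan is to verify the three estimates \eqref{Star1}, \eqref{Star2}, \eqref{Star3} of Definition \ref{Admissible_domain} on $\D = \T^d_g$ with the stated parameters, and then check that $\zeta_0 \leq \alpha_0$ (so that the constraint $0\le\eta\le\zeta_0$ is compatible with $\alpha>\alpha_0$) and that everything is consistent with $s_0 = \max\{\zeta_0,\alpha_0,d/4\}$. The central input is the $L^4$ Strichartz estimate for the Schr\"odinger equation on rectangular tori. Recall that on $\T^d_g$ one has the scale-invariant estimate (after Bourgain--Demeter decoupling, as in \cite{Burq_Gerard_Tzvetkov_2005,Herr_2013,HTT}) of the form $\|e^{it\Delta}P_N\phi\|_{L^4_{t,x}([0,1]\times\T^d_g)} \lesssim N^{s(d)+}\|P_N\phi\|_{L^2}$, where $s(d) = \tfrac{d-2}{4}$ for $d\ge 4$ (or $d\ge 5$, with a genuine loss at $d=4$) and $s(d)$ is the appropriate critical exponent for $d=2,3$ (namely $0+$ for $d=2$ and $\tfrac14+$ for $d=3$). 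Interpolating/combining with the trivial $L^\infty_t L^2_x$ bound and using H\"older in time, one gets the trilinear bound in $L^1_t$ with a small positive power of $T$ on a short time interval. The parameter $\epsilon(d)$ in \eqref{Definition of epsilon(d)} is exactly the room left over: for $d\ge 5$ the Strichartz estimate is scaling-critical, so only an $0+$ gain in $T$ is available, whereas for $2\le d\le 4$ there is a genuine positive gain $\tfrac{4-d}{2(d+2)}+$.

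First I would reduce \eqref{Star1} to a Littlewood--Paley piece: fix dyadic blocks $N_1,N_2,N_3$ for $\phi_1,\phi_2,\phi_3$, estimate $\|P_{N_1}u_1\,P_{N_2}u_2\,P_{N_3}u_3\|_{L^1_tL^2_x}$ by H\"older and the bilinear/trilinear Strichartz estimate (pairing the two lowest frequencies against each other, or using an $L^4\cdot L^4\cdot L^\infty$ or $L^4\cdot L^4\cdot L^2$ split with a Bernstein factor on the third — here Lemma \ref{Bernstein_inequality_torus} enters), and then sum over the dyadic pieces. The output block must be controlled in $B^{-\eta}_{2,1}$, so after taking $L^2$ norms there is a further sum with weight $\langle N_{\max}\rangle^{-\eta}$; since $\eta\le\zeta_0$ and $\zeta>\zeta_0$ the two large frequencies absorb the derivative losses, and the geometric decay in the frequency gaps makes the sums converge. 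Then \eqref{Star2} follows from \eqref{Star1} by Remark \ref{rmk:star1impliesstar2} (the argument that on tori one simply redistributes the $\langle N\rangle^\zeta$ weight onto whichever factor carries the top frequency, using symmetry of the estimate). For \eqref{Star3}, this is a fixed-time statement and follows from the Sobolev product estimate of Proposition \ref{Product estimate proposition}: with $\varrho=\alpha_0$ one has $3\alpha_0 - d = -\zeta_0$ in all dimension ranges (this is the arithmetic that pins down \eqref{Definition of alpha} given \eqref{Definition of sigma}), and $H^{3\alpha_0-d}\hookrightarrow B^{-\zeta_0}_{2,1}$ after possibly spending a $\delta$; so \eqref{Star3} holds with $\alpha>\alpha_0$. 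I would double-check the algebra: for $d\ge5$, $3(\tfrac d6 + \tfrac13) - d = \tfrac d2 + 1 - d = 1 - \tfrac d2 = -(\tfrac d2 -1) = -\zeta_0$, consistent; for $2\le d\le4$, $3\cdot\tfrac{d(d+5)}{6(d+2)} - d = \tfrac{d(d+5)}{2(d+2)} - d = \tfrac{d(d+5) - 2d(d+2)}{2(d+2)} = \tfrac{d(d+5-2d-4)}{2(d+2)} = \tfrac{d(1-d)}{2(d+2)} = -\tfrac{d(d-1)}{2(d+2)} = -\zeta_0$, consistent as well.

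The main obstacle I expect is \eqref{Star1}: getting the trilinear Strichartz estimate with a \emph{positive} power $T^\epsilon$ (rather than just a bound independent of $T$) while keeping the frequency losses exactly at the critical threshold $\zeta_0$. For $d\ge5$ this is delicate because the relevant Strichartz estimate is scaling-critical and there is genuinely no room; the resolution is to accept $\epsilon = 0+$, i.e. to extract a tiny power of $T$ at the cost of an arbitrarily small extra derivative loss, which is harmless since $\zeta$ only needs to exceed $\zeta_0$, not equal it. Concretely, one interpolates the critical $L^4$ estimate with a slightly subcritical one (available by Bernstein on the torus, losing $N^{0+}$ but gaining a power of $T$ from H\"older in the time integration against the short interval $[-T,T]$). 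For $2\le d\le4$ one instead works below the decoupling threshold from the start and reads off the genuine gain $\tfrac{4-d}{2(d+2)}$ from the gap between the exponent $\tfrac{d(d-1)}{2(d+2)}$ and the larger exponent one would get from a naive $L^4$-based estimate; this bookkeeping is where care is needed but no new idea is required. A secondary technical point is handling the zero frequency block $P_0$ and the Besov (rather than Bochner--$L^2$) summation, but these are routine given \eqref{Inclusion} and \eqref{dyadic Sobolev space M}.
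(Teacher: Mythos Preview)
Your outline is on the right track for \eqref{Star2} (via Remark~\ref{rmk:star1impliesstar2}), and the arithmetic verification $3\alpha_0 - d = -\zeta_0$ underlying \eqref{Star3} is correct. But the proposed implementation of \eqref{Star1} has a genuine gap, and the H\"older splits you suggest will not produce the stated value of $\zeta_0$.

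The crux of \eqref{Star1} is that the dyadic bound on $\|u_{1,N_1}\,u_{2,N_2}\,u_{3,N_3}\|_{L^1_T L^2_x}$ (with $N_1$ the largest frequency) must carry \emph{no} derivative loss on $N_1$: the entire loss must land on $N_2,N_3$. Your $L^4$-based split cannot achieve this for $d\ge 3$: the $L^4$ Strichartz estimate already costs $N_1^{(d-2)/4}$ on the top-frequency factor, and when $N_1 \gg N_2, N_3$ there is nowhere to put it. The paper instead places the top frequency in the near-endpoint space $L^{\frac{2(d+2)}{d}+}_{T,x}$, incurring only $N_1^{0+}$; then, in the regime $N_1 \gg N_2, N_3$, it removes even this $0+$ by an \emph{almost orthogonality} argument: decompose $u_{1,N_1}$ into frequency cubes $\mathcal{C}$ of sidelength $N_2$, note that the products $(P_{\mathcal{C}}\,u_{1,N_1})\,u_{2,N_2}\,u_{3,N_3}$ are almost orthogonal in $L^2_x$, and apply Strichartz at scale $N_2$ to each $P_{\mathcal{C}}\,u_{1,N_1}$ via Galilean invariance. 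This step is essential and is absent from your sketch. The two lower-frequency factors are placed in $L^{\frac{4(d+2)}{d+4}}_T L^{2(d+2)}_x$, and the dichotomy $d\le 4$ versus $d\ge 5$ arises because $\frac{4(d+2)}{d+4} \le \frac{2(d+2)}{d}$ exactly when $d\le 4$ (permitting, after Bernstein in space, a genuine H\"older-in-time gain $T^{\frac{4-d}{4(d+2)}}$ per factor), not because of any change in the scaling behaviour of Strichartz itself.

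Two further omissions. First, you do not address the high-high-to-low interaction $N \ll \max\{N_1,N_2,N_3\}$ (Case~2 in the paper), where one must convert the output weight $\langle N\rangle^{-\eta}$ into powers of the large input frequencies; the paper does this via Bernstein on the output, $\|P_N g\|_{L^2} \lesssim \langle N\rangle^{d/q_0 - d/2}\|g\|_{L^{q_0}}$ with $q_0<2$ chosen so that $d/q_0 - d/2 = \zeta_0$. Second, for \eqref{Star3} you invoke Proposition~\ref{Product estimate proposition}, but its hypothesis $\varrho \in (d/4, d/2)$ fails for $d\ge 5$ since there $\alpha_0 = d/6 + 1/3 < d/4$. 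The paper instead proves the embedding $L^{q_0+} \hookrightarrow B^{-\zeta_0}_{2,1}$ directly and then applies H\"older and Sobolev; the underlying mechanism is the one you intend, but Proposition~\ref{Product estimate proposition} as stated does not cover it.
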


From Theorem \ref{Main_theorem} and Proposition \ref{Torus_admissibility} we obtain 
Corollary \ref{unconditional_uniqueness_torus} stated earlier.
In the proof of Proposition \ref{Torus_admissibility} we use the Strichartz estimate on $\mathbb{T}^d_g$ \cite{Bourgain_Demeter1,Killip_Visan}.
\begin{proposition}[Strichartz estimate on $\mathbb{T}^d_g$]
\label{Strichartz Estimate}
For $p>\frac{2(d+2)}{d}$ we have
\begin{equation}
\notag
\|e^{it\Delta}P_{\leq N}f\|_{L^p_{t,x}([0,1] \times \mathbb{T}^d_g)} \lesssim_{d,p} \langle N \rangle^{\frac{d}{2}-\frac{d+2}{p}} \,\|f\|_{L^2_x}\,.
\end{equation}
Here 
\begin{equation*}
P_{\leq N}:=\sum_{N' \leq N} P_{N'}\,.
\end{equation*}
\end{proposition}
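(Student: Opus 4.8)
The plan is to obtain Proposition~\ref{Strichartz Estimate} from the $\ell^2$-decoupling theorem of Bourgain--Demeter for the paraboloid, combined with the periodicity and approximation arguments of Killip--Visan --- the two references attached to the statement; below I outline the mechanism.

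\textbf{Spectral reduction.} On $\mathbb{T}^d_g=\prod_{i=1}^d(\mathbb{R}/\tfrac{2\pi}{\theta_i}\mathbb{Z})$ the Laplacian has eigenfunctions $e^{i\xi\cdot x}$ indexed by the lattice $\Lambda_g:=\prod_i\theta_i\mathbb{Z}$, with eigenvalue $|\xi|^2$, so for $f$ with Fourier coefficients $(a_\xi)$ one has $P_{\le N}f$ supported on $\{|\xi|<N\}$ and
\[
e^{it\Delta}P_{\le N}f(x)=\sum_{\xi\in\Lambda_g,\ |\xi|<N}a_\xi\,e^{i(\xi\cdot x-t|\xi|^2)},\qquad \|f\|_{L^2}^2\sim_g\sum_\xi|a_\xi|^2 .
\]
Thus the space--time frequencies $(\xi,-|\xi|^2)$ lie on the paraboloid $\eta_{d+1}=-|\eta'|^2$ in $\mathbb{R}^{d+1}$, and the claim is a discrete restriction (Strichartz) estimate for the truncated paraboloid equipped with the lattice $\Lambda_g$, to be proved uniformly in $N$ (constants depending on $d,p,g$).

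\textbf{Decoupling on a Euclidean cube.} Suppose first the $\theta_i^2$ are rational, so that $u:=e^{it\Delta}P_{\le N}f$ is periodic in $t$ with some $g$-dependent period and in each $x_i$ with period $\tfrac{2\pi}{\theta_i}$. As $[0,1]$ sits inside one $t$-period, periodicity and positivity of $|u|^p$ give $\|u\|_{L^p([0,1]\times\mathbb{T}^d_g)}^{p}\lesssim_g N^{-2(d+1)}\|u\|_{L^p(Q)}^{p}$, where $u$ is now regarded as a function on $\mathbb{R}^{d+1}$ and $Q$ is a cube of side $\sim N^2$ tiled by $\sim_g N^{2(d+1)}$ period cells. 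To $\|u\|_{L^p(Q)}$ apply the $\ell^2$-decoupling theorem at scale $N^{-2}$, i.e.\ into unit caps of the region $\{|\xi|<N\}$: since the decoupling inequality is invariant under invertible affine changes of the frequency variable, the anisotropy of the $\theta_i$ (equivalently, using $\Lambda_g$ on the round paraboloid in place of $\mathbb{Z}^d$ on the elliptic one $\eta_{d+1}=\sum_i\theta_i^2\eta_i^2$) costs only a $g$-dependent factor and no power of $N$; and each unit cap meets $\Lambda_g$ in $O_g(1)$ points, so each decoupled piece is a bounded combination of characters whose $L^p(w_Q)$ norm is evaluated directly. Reassembling and undoing the first reduction gives
\[
\|u\|_{L^p([0,1]\times\mathbb{T}^d_g)}\ \lesssim_{d,p,g}\ D_p(N^{-2})\,\|f\|_{L^2},
\]
where $D_p(N^{-2})$ is the $\ell^2$-decoupling constant of the paraboloid in $\mathbb{R}^{d+1}$: $\lesssim_{\varepsilon}N^{\varepsilon}$ at $p=\tfrac{2(d+2)}{d}$ and $\lesssim_{\varepsilon}N^{\frac d2-\frac{d+2}{p}+\varepsilon}$ for $p>\tfrac{2(d+2)}{d}$.

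\textbf{Removing the loss; general rectangular tori.} Two points remain. First, for $p$ strictly above $\tfrac{2(d+2)}{d}$ the exponent $\tfrac d2-\tfrac{d+2}{p}$ is positive, and this slack permits removing the $N^{\varepsilon}$ by an induction on scales / $\varepsilon$-removal argument (seeded by the trivial bound $\|e^{it\Delta}P_{\le M}g\|_{L^\infty}\lesssim_g M^{d/2}\|g\|_{L^2}$), giving the sharp power. Second, for irrational $\theta_i^2$ the evolution $u$ is no longer $t$-periodic, so the tiling in $t$ is unavailable; this is handled by approximating $g$ by rational data at a scale depending on $N$, controlling the resulting error on $[0,1]$ by the trivial bound, and invoking the $N$-uniformity (and aspect-ratio uniformity, from affine invariance) of the decoupling constant above. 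Both steps are carried out in \cite{Killip_Visan}, building on \cite{Bourgain_Demeter1}; I expect the second --- the absence of exact time-periodicity on irrational tori --- to be the main obstacle, and in practice would invoke those theorems directly, exactly as the statement does.
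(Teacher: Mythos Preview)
The paper does not supply its own proof of this proposition: it is simply quoted from the literature, with the references \cite{Bourgain_Demeter1} and \cite{Killip_Visan} attached, and then used as a black box in the proof of Proposition~\ref{Torus_admissibility}. Your outline is a faithful account of how those two references combine to give the estimate---discrete restriction for the lattice paraboloid via $\ell^2$-decoupling (Bourgain--Demeter), followed by an $\varepsilon$-removal argument to reach the sharp scale-invariant exponent for $p>\tfrac{2(d+2)}{d}$ (Killip--Visan)---so in substance you are doing exactly what the paper does, namely invoking the cited theorems.

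One minor inaccuracy: in \cite{Killip_Visan} the passage to irrational tori is not carried out by rational approximation of the aspect ratios. Time-periodicity is never used; instead one unfolds the spatial periodicity to $\mathbb{R}^d$, inserts a Schwartz cutoff in the time variable, and applies decoupling directly on $\mathbb{R}^{d+1}$, which is insensitive to whether the $\theta_i^2$ are rational. This is a cleaner route than the approximation you sketch, but since you explicitly defer to the references for this step, it does not affect the correctness of your proposal.
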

Moreover we recall that the Bernstein inequality holds for the full range of integrability exponents, as on $\mathbb{R}^n$. We note that for general domains $\mathbb{D}$, we only have the claim for a partial range of integrability exponents, see Lemma \ref{Bernstein1_lemma} below.

Let $\Z^d_g=(\theta_1\Z\times \cdots \times \theta_d\Z)$ and $\widetilde{P}_N$ be a smoothed out version of $P_N$, i.e.\
\begin{equation}
\label{P_N_smooth}
\widetilde{P}_N \deq \sum_{k\in \Z^d_g} \varphi_N(k) \chi_k\,,
\end{equation}
where, if $N\geq 1$, $\varphi_N=\varphi(\cdot/N)$ and $\varphi \in C_c^\infty(\mathbb{R}^d)$ equal to one on $1 \leq |\xi| <2$ and supported on $1/2 \leq |\xi| \leq 4$, and $\varphi_0 \in C_c^\infty(\mathbb{R}^d)$ equal to one on $|\xi| <1$ and supported on $|\xi| \leq 2$. In particular, we have $\widetilde{P}_N P_N=P_N$.
The following is well-known, for the reader's convenience we provide a short proof
in Appendix \ref{appendix-bern}.
\begin{lemma}[Bernstein's inequality on the rectangular torus]
\label{Bernstein_inequality_torus}
Let $1 \leq p \leq q \leq \infty$. For all $N$ we have
\begin{equation}
\label{Bernstein_inequality_torus_bound}
\|\widetilde{P}_N f\|_{L^q} \lesssim \langle N \rangle^{d/p-d/q}  \|f\|_{L^p}.
\end{equation}
\end{lemma}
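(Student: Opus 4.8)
The plan is to prove Bernstein's inequality on $\mathbb{T}^d_g$ by reducing it to the Euclidean Bernstein inequality via the Poisson summation formula, using the fact that the smoothed projection $\widetilde P_N$ is given by convolution with a kernel whose $L^1$ norm we can control uniformly in $N$. First I would observe that $\widetilde P_N f = K_N * f$ on $\mathbb{T}^d_g$, where the kernel is the periodization
\[
K_N(x) = \sum_{k \in \Z^d_g} \varphi_N(k)\, e^{ik\cdot x},
\]
and by Poisson summation this equals (up to the normalizing volume constant of the torus) $\sum_{n \in L} \widehat{\varphi_N}(x - n)$, where $L$ is the dual lattice to $\Z^d_g$ (so $L = (\tfrac{2\pi}{\theta_1}\Z) \times \cdots \times (\tfrac{2\pi}{\theta_d}\Z)$) and $\widehat{\varphi_N}$ denotes the Euclidean inverse Fourier transform of $\varphi_N$. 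Here $\varphi_N = \varphi(\cdot/N)$ for $N\geq 1$, so $\widehat{\varphi_N}(y) = N^d\, \widehat{\varphi}(Ny)$, and since $\varphi \in C_c^\infty(\R^d)$, $\widehat\varphi$ is Schwartz.

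The key step is the uniform bound $\|K_N\|_{L^1(\mathbb{T}^d_g)} \lesssim 1$. By the periodization formula,
\[
\|K_N\|_{L^1(\mathbb{T}^d_g)} \le \int_{\mathbb{T}^d_g} \sum_{n \in L} |\widehat{\varphi_N}(x-n)|\, dx = \int_{\R^d} |\widehat{\varphi_N}(y)|\, dy = \|\widehat{\varphi}\|_{L^1(\R^d)},
\]
which is a finite constant independent of $N$ (and the $N=0$ case is handled identically with $\varphi_0$ in place of $\varphi$). Then Young's inequality on the torus gives $\|\widetilde P_N f\|_{L^q} = \|K_N * f\|_{L^q} \le \|K_N\|_{L^r} \|f\|_{L^p}$ with $1 + \tfrac1q = \tfrac1r + \tfrac1p$. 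To get the claimed gain $\langle N\rangle^{d/p - d/q}$ I would instead estimate $\|K_N\|_{L^r}$ directly: interpolating between $\|K_N\|_{L^1} \lesssim 1$ and $\|K_N\|_{L^\infty} \lesssim \|\widehat{\varphi_N}\|_{L^1(\mathbb{T}^d_g \text{-cell})} \cdot(\text{count}) \lesssim N^d$ — more precisely $\|K_N\|_{L^\infty(\mathbb{T}^d_g)} \le \sum_{k\in\Z^d_g}|\varphi_N(k)| \lesssim \langle N\rangle^d$ since $\varphi_N$ is supported in a ball of radius $\sim N$ containing $\sim N^d$ lattice points — yields $\|K_N\|_{L^r} \lesssim \langle N\rangle^{d(1 - 1/r)} = \langle N\rangle^{d/p - d/q}$, and Young's inequality finishes the proof.

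I expect the main (minor) obstacle to be bookkeeping the normalization constants: the torus $\mathbb{T}^d_g$ has volume $\prod_j (2\pi/\theta_j)$ and dual lattice $\Z^d_g$, so Poisson summation and Young's inequality carry volume-dependent factors, but these are all finite constants depending only on $g$ (hence absorbed into $\lesssim$), and since the statement already treats the torus as fixed this causes no real difficulty. One should also be slightly careful that $\widetilde P_N$ as defined in \eqref{P_N_smooth} acts as a genuine Fourier multiplier on $\mathbb{T}^d_g$ — this is immediate since each $\chi_k$ projects onto the single exponential $e^{ik\cdot x}$ (frequencies lie in $\Z^d_g$), so $\widetilde P_N$ is exactly convolution with $K_N$ as above. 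With these points checked, the proof is the standard Euclidean argument transported to the torus via periodization.
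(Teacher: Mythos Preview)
Your proposal is correct and follows essentially the same route as the paper's proof: represent $\widetilde P_N$ as convolution with a kernel, use Poisson summation to identify the kernel as a periodization of $\widehat{\varphi_N}$, bound the kernel in $L^1$ (via the $L^1(\R^d)$ norm of $\widehat\varphi$) and in $L^\infty$ (by $\langle N\rangle^d$), interpolate to $L^r$, and apply Young's inequality. The only cosmetic differences are that the paper first rescales to the standard torus, and obtains the $L^\infty$ bound from the spatial side using the Schwartz decay of the periodized kernel rather than by counting lattice points in $\supp\varphi_N$ as you do; both are equally valid.
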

We now give the proof of Proposition \ref{Torus_admissibility}
\begin{proof}[Proof of Proposition \ref{Torus_admissibility}]
We first prove \eqref{Star1} for the choice of parameters as in \eqref{Definition of sigma}, \eqref{Definition of alpha}, and \eqref{Definition of epsilon(d)} above. In the sequel we abbreviate $\|\cdot\|_{L^p([-T,T])}$ as $\|\cdot\|_{L^p_T}$.
Let us note for $0 \leq \eta \leq \zeta_0$ we have
\begin{equation}
\label{Big Sum}
\|u_1 u_2 u_3 \|_{L^1_t B^{-\eta}_{2,1}([-T,T] \times \mathbb{T}^d_g)} \leq \sum_{N} \langle N \rangle^{-\eta} \, \|P_N(u_1 u_2 u_3)\|_{L^1_T L^2_x} =: I\,.
\end{equation}
We write
\begin{equation*}
P_N(u_1 u_2 u_3)=\sum_{N_1,N_2,N_3} P_N\big(u_{1,N_1} \, u_{2,N_2} u_{3,N_3}\big)\,,
\end{equation*}
where we abbreviate $u_{j,N_j}:=P_{N_j}u_j$ for $j=1,2,3$. We remark that there is no contribution unless $N \lesssim \max\,\{N_1,N_2,N_3\}$. We now consider two cases, depending on the relative size of $\max\,\{N_1,N_2,N_3\}$ with respect to $N$. 

\bigskip

\textbf{Case 1:} $\max\,\{N_1,N_2,N_3\} \lesssim N$.

\medskip

\textbf{Case 1.A:} $N_1=\max\,\{N_1,N_2,N_3\}$.
In this case there is no contribution unless $N\sim N_1$ and we obtain
\begin{equation}
\label{sum Case 1A}
I \lesssim \mathop{\sum_{N_1,N_2,N_3}}_{N_1 \geq N_2,N_3} \langle N_1 \rangle^{-\eta} \, \|u_{1,N_1} \,u_{2,N_2} \,u_{3,N_3}\|_{L^1_T L^2_x}\,.
\end{equation}
With the abbreviation $\phi_{j,N_j}:=P_{N_j}\phi_j$ for $j=1,2,3$, it suffices to show that
\begin{align}
\notag
&\|u_{1,N_1} \,u_{2,N_2} \,u_{3,N_3}\|_{L^1_T L^2_x}
\\
\label{star}
&\lesssim
T^{\epsilon} \,\|\phi_{1,N_1}\|_{L^2_x} \,\Big(\langle N_2 \rangle^{\zeta_0+} \,\|\phi_{2,N_2}\|_{L^2_x} \Big) \,\Big(\langle N_3 \rangle^{\zeta_0+} \,\|\phi_{3,N_3}\|_{L^2_x}\Big)\,.
\end{align}

In order to establish \eqref{star} we consider two subcases.

\medskip

\textbf{Subcase 1.A.i:} $N_1 \lesssim \max \,\{N_2,N_3\}$.
By H\"{o}lder's inequality we have
\begin{align}
\notag
&\|u_{1,N_1} \,u_{2,N_2} \,u_{3,N_3}\|_{L^1_T L^2_x} 
\\
\label{Subcase 1A1 Holder's inequality}
&\leq \|u_{1,N_1}\|_{L^{\frac{2(d+2)}{d}}_{T,x}} \,\|u_{2,N_2}\|_{L^{\frac{4(d+2)}{d+4}}_T L^{2(d+2)}_x} \,\|u_{3,N_3}\|_{L^{\frac{4(d+2)}{d+4}}_T L^{2(d+2)}_x}\,.
\end{align}

We will apply \eqref{Subcase 1A1 Holder's inequality} differently depending on whether $d \leq 4$ or $d \geq 5$.

\medskip

\textbf{(1) $2 \leq d \leq 4$.}
In this case, we have 
$
\tfrac{4(d+2)}{d+4} \leq \tfrac{2(d+2)}{d} \leq 2(d+2).
$
We apply Lemma \ref{Bernstein_inequality_torus} (which is possible since $u_{2,N_2}=\widetilde{P}_{N_2} u_{2,N_2}$), H\"{o}lder's inequality, and Proposition \ref{Strichartz Estimate} with $p=\frac{2(d+2)}{d}+$ to deduce that
\begin{align*}
\notag
\|u_{2,N_2}\|_{L^{\frac{4(d+2)}{d+4}}_T L^{2(d+2)}_x} 
\lesssim T^{\,\frac{4-d}{4(d+2)}+} \,\langle N_2 \rangle^{\,\frac{d \,(d-1)}{2(d+2)}+} \, \|\phi_{2,N_2}\|_{L^2_x}\,.
\end{align*}
An analogous bound holds for $u_{3,N_3}$. Using these bounds and Proposition \ref{Strichartz Estimate} with $p=\frac{2(d+2)}{d}+$ for the $u_{1,N_1}$ factor, we get that the expression \eqref{Subcase 1A1 Holder's inequality} is 
\begin{align*}
\notag
&\lesssim T^{\,\frac{4-d}{2(d+2)}+} \,\Big(\langle N_1 \rangle^{\,0+} \,\|\phi_{1,N_1}\|_{L^2_x} \Big) \,\Big(\langle N_2 \rangle^{\,\frac{d \,(d-1)}{2(d+2)}+} \,\|\phi_{2,N_2}\|_{L^2_x}\Big) \,\Big(\langle N_3 \rangle^{\,\frac{d \,(d-1)}{2(d+2)}+} \,\|\phi_{3,N_3}\|_{L^2_x}\Big)
\\
&\lesssim  T^{\,\frac{4-d}{2(d+2)}+} \,\|\phi_{1,N_1}\|_{L^2_x} \,\Big(\langle N_2 \rangle^{\,\frac{d \,(d-1)}{2(d+2)}+} \,\|\phi_{2,N_2}\|_{L^2_x}\Big) \,\Big(\langle N_3 \rangle^{\,\frac{d \,(d-1)}{2(d+2)}+} \,\|\phi_{3,N_3}\|_{L^2_x}\Big)\,.
\end{align*}
In the last line, we used the fact that $N_1 \lesssim \max \,\{N_2,N_3\}$ to distribute the factor of $\langle N_1 \rangle^{0+}$.

\medskip

\textbf{(2) $d \geq 5$.}
In this case, we have
$\tfrac{2(d+2)}{d}<\tfrac{4(d+2)}{d+4}<2(d+2).
$
We first apply Lemma \ref{Bernstein_inequality_torus} to obtain that \eqref{Subcase 1A1 Holder's inequality} is
\begin{equation}
\label{d>=5_estimate}
\leq \|u_{1,N_1}\|_{L^{\frac{2(d+2)}{d}}_{T,x}} \,\Big(\langle N_2 \rangle^{\,\frac{d}{4}} \,\|u_{2,N_2}\|_{L^{\frac{4(d+2)}{d+4}}_{T,x}}\Big) \,\Big(\langle N_3 \rangle^{\,\frac{d}{4}} \,\|u_{3,N_3}\|_{L^{\frac{4(d+2)}{d+4}}_{T,x}}\Big)\,.
\end{equation} 
To estimate the $u_{2,N_2}$ and $u_{3,N_3}$ factors in \eqref{d>=5_estimate} apply Proposition \ref{Strichartz Estimate} with $p=\frac{4(d+2)}{d+4}$. The $u_{1,N_1}$ factor we estimate as we did for $d \leq 4$. Therefore, \eqref{d>=5_estimate} is 
\begin{align*}
&\lesssim T^{\,0+} \, \|\phi_{1,N_1}\|_{L^2_x} \, \Big(\langle N_2 \rangle^{\,(\frac{d}{2}-1)+} \, \|\phi_{2,N_2}\|_{L^2_x} \Big) \, \Big(\langle N_3 \rangle^{\,(\frac{d}{2}-1)+} \, \|\phi_{3,N_3}\|_{L^2_x}\Big)\,.
\end{align*}
This finishes the proof of \eqref{star} in Subcase 1.A.i.

\medskip

\textbf{Subcase 1.A.ii:} $N_1 \gg N_2,N_3$. By symmetry, we can assume without loss of generality that $N_2 \geq N_3$.
In this subcase, we can no longer distribute the factor of $N_1^{\,0+}$ into the factors involving $N_2$ and $N_3$. Instead, we need to apply an \emph{almost orthogonality} argument. More precisely, let $\Gamma$ denote the collection of all non-overlapping cubes of size $N_2$ with centers in the lattice $N_2 \mathbb{Z}^d_{g}$.
For $\mathcal{C} \in \Gamma$, we denote by $P_{\mathcal{C}}$ the projection to $\mathcal{C}$ in frequency space. Given $\mathcal{C} \in \Gamma$, we observe that
\begin{equation}
\label{almost_orthogonality}
\int_{\mathbb{T}^d_{g}} \big(P_{\mathcal{C}} \,u_{1,N_1}\big) \, u_{2,N_2} \, u_{3,N_3} \, \overline{\big(P_{\mathcal{C'}} \,u_{1,N_1}\big) \, u_{2,N_2} \, u_{3,N_3}} \,dx \neq 0
\end{equation}
only for finitely many $\mathcal{C'} \in \Gamma$, with a bound on this number independent of $\mathcal{C},N_1,N_2,N_3$.
By Galilean invariance of $e^{it \Delta}$ and by Proposition \ref{Strichartz Estimate} for $p=\frac{2(d+2)}{d}+$, it follows that

\begin{equation} 
\label{Strichartz Estimate Galilean Invariance}
\|P_{\mathcal{C}}\,u_{1,N_1}\|_{L^{\frac{2(d+2)}{d}+}_{T,x}} \lesssim \langle N_2 \rangle^{\,0+} \, \|P_{\mathcal{C}}\,\phi_{1,N_1}\|_{L^2_x}.
\end{equation}
By applying \eqref{Strichartz Estimate Galilean Invariance} instead of
Proposition \ref{Strichartz Estimate} when $p=\frac{2(d+2)}{d}+$ to
estimate the factor with frequency of order $N_1$, the arguments from
Subcase 1.A.i imply that
\begin{align}
\notag
&\big\|\big(P_{\mathcal{C}} \,u_{1,N_1}\big) \, u_{2,N_2} \, u_{3,N_3}\big\|_{L^1_T L^2_x} 
\\
\label{almost_orthogonality2}
&\lesssim T^{\,\epsilon} \, \|P_{\mathcal{C}}\,\phi_{1,N_1}\|_{L^2_x} \, \Big(\langle N_2 \rangle^{\zeta_0+} \, \|\phi_{2,N_2}\|_{L^2_x} \Big) \, \Big(\langle N_3 \rangle^{\zeta_0+} \, \|\phi_{3,N_3}\|_{L^2_x}\Big)\,.
\end{align}
Taking an $\ell^2$ sum over $\mathcal{C} \in \Gamma$ in
\eqref{almost_orthogonality2} and using \eqref{almost_orthogonality},
we deduce \eqref{star} in Subcase 1.A.ii. In summary, in Case 1.A we obtain

\begin{align}
\notag
I \lesssim{} &\mathop{\sum_{N_1,N_2,N_3}}_{N_1 \geq N_2,N_3} \langle N_1 \rangle^{-\eta} \, \|u_{1,N_1} \, u_{2,N_2} \, u_{3,N_3}\|_{L^1_T L^2_x} 
\\
\notag
\lesssim{}& \mathop{\sum_{N_1,N_2,N_3}}_{N_1 \geq N_2,N_3} T^{\,\epsilon} \, \Big(\langle N_1 \rangle^{-\eta} \, \|\phi_{1,N_1}\|_{L^2_x} \Big) \, \Big(\langle N_2 \rangle^{\zeta_0+} \, \|\phi_{2,N_2}\|_{L^2_x} \Big) \, \Big(\langle N_3 \rangle^{\zeta_0+} \, \|\phi_{3,N_3}\|_{L^2_x}\Big)
\\
\label{Case 1A bound}
\lesssim{}& T^{\,\epsilon} \, \|\phi_1\|_{B^{-\eta}_{2,1}} \, \|\phi_2\|_{B^{\zeta}_{2,1}} \, \|\phi_3\|_{B^{\zeta}_{2,1}}\,.
\end{align}

\medskip

\textbf{Case 1.B:} $N_2=\max\,\{N_1,N_2,N_3\}$. 
Since $N \sim N_2 \geq N_1$, it suffices to estimate
\begin{equation}
\label{sum Case 1B}
\mathop{\sum_{N_1,N_2,N_3}}_{N_2 \geq N_1,N_3} \langle N_1 \rangle^{-\eta} \, \|u_{1,N_1} \, u_{2,N_2} \, u_{3,N_3}\|_{L^1_T L^2_x}.
\end{equation}
We estimate each summand in \eqref{sum Case 1B} by using \eqref{star} as in Case 1. 
The point is that, since $N_1 \leq N_2$, a factor of $\langle N_1 \rangle^{\,0+}$ can be absorbed into the powers of $\langle N_2 \rangle$. 
Therefore, we can deduce that the contribution from Case 1.B also satisfies the bound \eqref{Case 1A bound}.

\medskip

\textbf{Case 1.C:} $N_3=\max\,\{N_1,N_2,N_3\}$. 
Case 1.C is analogous to Case 1.B if we reverse the roles of $N_2$ and $N_3$.

\medskip

\textbf{Case 2:} $\max\,\{N_1,N_2,N_3\} \gg N$.
We observe that $P_N(u_{1,N_1} u_{2,N_2} u_{3,N_3})=0$ unless the largest two elements of $\{N_1,N_2,N_3\}$ are comparable.
We consider three possible cases.

\medskip

\textbf{Case 2.A:} $N_1 \sim N_2 \gtrsim N_3$.
In this case, we know that $N_1 \sim N_2 \gg N$.
Let $q \in [1,2]$ be fixed. We have

\begin{align}
\notag
& \langle N \rangle^{-\eta} \, \|P_N(u_{1,N_1} \, u_{2,N_2} \, u_{3,N_3})\|_{L^1_T L^2_x} \leq
\langle N \rangle^{-\eta} \, \|\widetilde{P}_N(u_{1,N_1} \, u_{2,N_2} \, u_{3,N_3})\|_{L^1_T L^2_x} 
\\
\label{Case2A product bound1} 
&\lesssim \langle N \rangle^{-\delta} \, \langle N \rangle^{-\eta+\frac{d}{q}-\frac{d}{2}+\delta} \,  \|u_{1,N_1} \, u_{2,N_2} \, u_{3,N_3}\|_{L^1_T L^q_x}\,.
\end{align}
For the last inequality, we used Lemma \ref{Bernstein_inequality_torus}.
Here $\delta>0$ is a small, arbitrary constant. We now choose $q$ to be $q_0 \equiv q_0(d)$ such that $\frac{d}{q_0}-\frac{d}{2}=\zeta_0$, for $\zeta_0$ as in \eqref{Definition of sigma}. In particular, we have
\begin{equation}
\label{choice_of_q}
q_0 = 
\begin{cases}
\frac{2(d+2)}{2d+1}\,\,\, &\mbox{for } d \leq  4\\
\frac{d}{d-1}\,\,\, &\mbox{for } d \geq 5\,.
\end{cases}
\end{equation}
Then $q_0 \in [1,2]$. Furthermore, the condition on $\eta$ in \eqref{Condition_on_eta} can then be rewritten as
\begin{equation*}
0 \leq \eta \leq \frac{d}{q_0}-\frac{d}{2}\,.
\end{equation*}
In particular, choosing $q=q_0$, the expression \eqref{Case2A product bound1} is
\begin{align}
\notag
&\lesssim \langle N \rangle^{-\delta} \, \langle N_1 \rangle^{-\eta+\zeta_0+\delta} \, \|u_{1,N_1} \, u_{2,N_2} \, u_{3,N_3}\|_{L^1_T L^q_x}
\\
\label{Case2A product bound1B}
&\sim \langle N \rangle^{-\delta} \, \langle N_1 \rangle ^{-\eta} \, \langle N_2 \rangle^{\zeta_0+\delta} \, \|u_{1,N_1} \, u_{2,N_2} \, u_{3,N_3}\|_{L^1_T L^q_x}\,.
\end{align}
Therefore, it remains to estimate
\begin{equation}
\label{Case2A product bound 2}
\langle N_1 \rangle^{-\eta} \, \langle N_2 \rangle^{\zeta_0+\delta} \, \|u_{1,N_1} \, u_{2,N_2} \, u_{3,N_3}\|_{L^1_T L^q_x}
\end{equation}
for fixed dyadic $N_1,N_2,N_3$ with $N_1 \sim N_2 \gtrsim N_3$. As in Case 1 we consider the cases $d \leq 4$ and $d \geq 5$ separately.

\medskip
\textbf{(1)} $2 \leq d \leq 4$.
By H\"{o}lder's inequality we note that, for $q$ as in \eqref{choice_of_q} we have
\begin{equation}
\label{Case 2A Holder's inequality}
\|u_{1,N_1} \, u_{2,N_2} \, u_{3,N_3}\|_{L^1_TL^q_x} \leq \|u_{1,N_1}\|_{L^{\frac{2(d+2)}{d}}_{T,x}} \, \|u_{2,N_2}\|_{L^{\frac{2(d+2)}{d}}_{T,x}} \, \|u_{3,N_3}\|_{L^{\frac{d+2}{2}}_T L^{2(d+2)}_x}\,.
\end{equation}
By Lemma \ref{Bernstein_inequality_torus}, H\"{o}lder's inequality, and Proposition \ref{Strichartz Estimate}, we can estimate the third factor in \eqref{Case 2A Holder's inequality} as
\begin{equation}
\label{3rd_factor_Case2}
\|u_{3,N_3}\|_{L^{\frac{d+2}{2}}_T L^{2(d+2)}_x} \lesssim T^{\,\frac{4-d}{2(d+2)}+} \, \langle N_3 \rangle^{\frac{d(d-1)}{2(d+2)}+}\,\|\phi_{3,N_3}\|_{L^2_x}\,.
\end{equation}
Estimating the $u_{1,N_1},u_{2,N_2}$ factors in \eqref{Case 2A Holder's inequality} as in Case 1, using \eqref{3rd_factor_Case2} as well as \eqref{choice_of_q} we obtain that the right-hand side of \eqref{Case2A product bound 2} is
\begin{equation}
\lesssim T^{\,\epsilon} \, \Big(\langle N_1 \rangle^{-\eta} \, \|\phi_{1,N_1}\|_{L^2_x} \Big) \, \Big(\langle N_2 \rangle^{\zeta_0+} \, \|\phi_{2,N_2}\|_{L^2_x} \Big) \, \Big(\langle N_3 \rangle^{\zeta_0+} \, \|\phi_{3,N_3}\|_{L^2_x}\Big)\,.
\end{equation}
We now conclude the argument as in Case 1.

\medskip

\textbf{(2)} $d \geq 5$.
In this case we use
\begin{align}
\notag
&\|u_{1,N_1} \, u_{2,N_2} \, u_{3,N_3}\|_{L^1_T L^{\frac{d}{d-1}}_x} 
\\
\label{Case2_d>4}
&\leq \|u_{1,N_1}\|_{L^{\frac{2(d+2)}{d}}_{T,x}} \, \|u_{2,N_2}\|_{L^{\frac{2(d+2)}{d}}_{T,x}} \, \|u_{3,N_3}\|_{L^{\frac{d+2}{2}}_T L^{\frac{d(d+2)}{d-2}}_x}\,.
\end{align}
By Lemma \ref{Bernstein_inequality_torus}, H\"{o}lder's inequality, and Proposition \ref{Strichartz Estimate} we obtain that
\begin{equation*}
\|u_{3,N_3}\|_{L^{\frac{d+2}{2}}_T L^{\frac{d(d+2)}{d-2}}_x} \lesssim T^{0+} \,\langle N_3 \rangle^{\zeta_0+} \, \|\phi_{3,N_3}\|_{L^2_x}
\end{equation*}
and we conclude the claim as in the case $d \leq 4$.

\medskip

\textbf{Case 2.B:} $N_1 \sim N_3 \gtrsim N_2$. This is analogous to Case 2.A  (interchange the roles $N_2$ and $N_3$).

\medskip

\textbf{Case 2.C:} $N_2 \sim N_3 \gtrsim N_1$.
In this case, we replace the power of $N_1$ in \eqref{Case2A product bound1B} by the corresponding power of $N_2$ and obtain
\begin{align*}
 \notag & \langle N \rangle^{-\eta} \, \|P_N(u_{1,N_1} \, u_{2,N_2} \, u_{3,N_3})\|_{L^1_T L^2_x}\\
\notag \lesssim{}& \langle N \rangle^{-\delta} \, \langle N_2 \rangle^{-\eta+\zeta_0+\delta} \, \|u_{1,N_1} \, u_{2,N_2} \, u_{3,N_3}\|_{L^1_T L^q_x}\\
\lesssim{}& \langle N \rangle^{-\delta} \, \langle N_1 \rangle^{-\eta} \, \langle N_2 \rangle^{\zeta_0+\delta} \, \|u_{1,N_1} \, u_{2,N_2} \, u_{3,N_3}\|_{L^1_T L^q_x}
\end{align*}
We can now argue as in Case 2.A. It is important to note that the factor of $N_1^{\,0+}$ coming from the application of Proposition \ref{Strichartz Estimate} can be absorbed into the powers of $N_2$ and $N_3$ in this case. This finishes the proof of \eqref{Star1} for the given choice of parameters.

\bigskip

As mentioned in Remark \ref{rmk:star1impliesstar2}, on $\D=\T^d_g$ the estimate \eqref{Star2} is a immediate consequence of \eqref{Star1}.
Indeed, if $S_3$ denotes the symmetric group, we have
\[
\|u_{1,N_1} u_{2,N_2} u_{3,N_3}\|_{L^1_T B^{\zeta}_{2,1}}\lesssim \sum_{\sigma \in S_3}\|\langle N_{\sigma(1)}\rangle^{\zeta} u_{\sigma(1),N_{\sigma(1)}} u_{\sigma(2),N_{\sigma(2)}} u_{\sigma(3),N_{\sigma(3)}}\|_{L^1_T B^{0}_{2,1}}.
\]
Now, \eqref{Star1} with $\eta=0$ yields
\[
\|u_{1,N_1} u_{2,N_2} u_{3,N_3}\|_{L^1_T B^{\zeta}_{2,1}}\lesssim \| \phi_{1,N_1}\|_{B^{\zeta}_{2,1}} \|\phi_{2,N_2}\|_{B^{\zeta}_{2,1}}\| \phi_{3,N_3}\|_{B^{\zeta}_{2,1}},
\]
and by summing up with respect to $N_1,N_2,N_3$ we obtain \eqref{Star2}.

\bigskip

We now turn to the proof of \eqref{Star3}. We start by noting that 
\begin{equation}
\label{Besov_embedding}
\|f\|_{B^{-\zeta_0}_{2,1}} \lesssim \|f\|_{L^q_x}\,,
\end{equation}
whenever $q \in (q_0,\infty]$ where $q_0$ is given by \eqref{choice_of_q}. By duality and compactness of the domain, \eqref{Besov_embedding} follows if we prove
\begin{equation}
\label{Besov Sobolev embedding bound duality}
\|f\|_{L^{q_0'-}_x} \lesssim \|f\|_{B^{\zeta_0}_{2,\infty}}\,
\end{equation}
Let $p:=q_0'-$. We then have $p>2$. In order to obtain \eqref{Besov Sobolev embedding bound duality} we use $P_N=\widetilde{P}_N P_N$ and Lemma \ref{Bernstein_inequality_torus} to deduce that
\begin{equation*}
\|f\|_{L^p_x}  
\lesssim_{\delta} \sup_{N}\, \langle N \rangle^{d/2-d/p+\delta} \|P_Nf\|_{L^2_x}= \|f\|_{B^{d/2-d/p+\delta}_{2,\infty}} \leq \|f\|_{B^{\zeta_0}_{2,\infty}}\,,
\end{equation*}
for $\delta$ sufficiently small. This proves \eqref{Besov Sobolev embedding bound duality} and hence \eqref{Besov_embedding}. To conclude the claim, we consider two cases, depending on the dimension.

\medskip

\textbf{(1)} $2 \leq d \leq 4$.
In this case, we use \eqref{Besov_embedding} and H\"{o}lder's inequality to obtain
\begin{equation*}
\|f_1 f_2 f_3\|_{B^{-\zeta_0}_{2,1}} 
\lesssim \|f_1\|_{L^{\frac{6(d+2)}{2d+1}+}_x} \,\|f_2\|_{L^{\frac{6(d+2)}{2d+1}+}_x}\,\|f_3\|_{L^{\frac{6(d+2)}{2d+1}+}_x}
\end{equation*}
and we deduce the claim from the Sobolev embedding
\begin{equation*}
H^{\frac{d \,(d+5)}{6(d+2)}+}_x \hookrightarrow L^{\frac{6(d+2)}{2d+1}+}_x\,,
\end{equation*}
see Proposition \ref{Sobolev_embedding} below.

\medskip

\textbf{(1)} $d \geq 5$.
In this case, we use
\begin{equation*}
\|f_1 f_2 f_3\|_{B^{-\zeta_0}_{2,1}} \lesssim \|f_1\|_{L^{\frac{3d}{d-1}+}_x} \, \|f_2\|_{L^{\frac{3d}{d-1}+}_x}  \,  \|f_3\|_{L^{\frac{3d}{d-1}+}_x}  
\end{equation*}
and the Sobolev embedding
\begin{equation*}
H^{\frac{d}{6}+\frac{1}{3}+}_x \hookrightarrow L^{\frac{3d}{d-1}+}_x
\end{equation*}
implies the claimed estimate.
\end{proof}

\begin{remark}\label{rmk:zhou}
One can also adapt the methods developed in \cite{Zhou} to prove an unconditional uniqueness result for \eqref{NLS}, but in this way, one obtains a worse range for $s$ than in \eqref{unconditional_uniqueness_torus_s}. In particular, this method does not give unconditional uniqueness in the energy class when $d=3$.
\end{remark}

\section{Uniqueness results in the 1d case and the Proof of Theorem \ref{thm:uu-1d}}
\label{sec:1d}
Let 
$I=[-T,T]$ for $T>0$, $\D=\T$ or $\D=\R$. Consider \eqref{NLS} in $1d$ 
with initial condition $\phi(0)=\phi_0 \in L^2(\D)$. 
It is well-posed, with uniqueness in some auxiliary space such as $X^{0,\frac{3}{8}}$ \cite{B93} or merely $L^4(I\times \D)$. The latter is  obvious in the case $\D=\R$, while in the case $\D=\T$ this follows from \cite{MV} and the Christ-Kiselev Lemma. Further, the problem is ill-posed below $L^2(\T)$, see \cite{Mol,OhSul}.

If $\D=\R$, it is proved in \cite{G2} that \eqref{NLS} is well-posed for initial data in Fourier-Lebesgue spaces. To be precise, let $\mu$ denote the Lebesgue measure on $\D^\ast=\R$ if $\D=\R$ and the counting measure on $\D^\ast=\Z$ if $\D=\T$. Suppose that $\psi(0)=\psi_0$ is a ($2\pi$-periodic if $\D=\T$) tempered distribution satisfying
\[
\|\phi_0\|_{\widehat{L^r}}:=\Big(\int |\widehat{\phi_0}(\xi)|^{r'}d\mu(\xi)\Big)^{\frac{1}{r'}}<+\infty,
\]
for $1<r\leq 2$, and $\frac{1}{r}+\frac{1}{r'}=1$. Following \cite{G,G2} we define the space $X^{s,b}_r$ via
\[
\|\phi\|_{X^{s,b}_r}=\Big(\int \ang{\xi}^{sr'}\int_\R \ang{\tau+|\xi|^2}^{br'}|\widehat{\phi}(\tau,\xi)|^{r'}d\tau d\mu(\xi)\Big)^{\frac{1}{r'}},
\]
and $X^{s,b}_r(I)$ is the restriction space. We refer the reader to \cite[Section 2]{G} for more details on these spaces. Now, if $\D=\R$, due to \cite[Theorem 1]{G2} there exists a unique solution $\psi\in X^{0,b}_r(I)$ of \eqref{NLS}, provided that $T>0$ is small enough, $1<r\leq 2$ and $b>\frac{1}{r}$.

In the case $\D=\T$, the key nonlinear estimate for the corresponding result in the case $\D=\T$ is proved in the introduction of \cite{GH}. This case is slightly more delicate and one needs to renormalize the equation first.
The well-known gauge transform \[\psi(t,x)=\exp\Big(i t \frac{1}{\pi} \int_0^{2\pi}|\phi(t,y)|^2dy \Big)\phi(t,x)\]
leads to the renormalized NLS
\begin{equation}\label{eq:1d-ren-nls}
i\partial_t\psi +\partial_x^2 \psi=\Big(|\psi|^2-\frac{1}{\pi}\int_0^{2\pi} |\psi(\cdot,y)|^2dy \Big) \psi \text{ on }(0,T)\times \T.
\end{equation}
The discussion in the introduction of
\cite{GH} implies that there is a trilinear form $N(\psi_1,\psi_2,\psi_3)$ with the property that
\[N(\psi,\psi,\overline{\psi})=\Big(|\psi|^2-\frac{1}{\pi}\int_0^{2\pi} |\psi(\cdot,y)|^2dy \Big)\psi .\]
\cite[Prop. 1.3]{GH} further implies that if $1<r\leq 2$, $b>\frac{1}{r}$, $\varepsilon>0$, then for all $\psi_j \in X^{0,b}_r$  we have $N(\psi_1,\psi_2,\overline{\psi_3})\in X^{0,-\varepsilon}_r$ and
\[
\|N(\psi_1,\psi_2,\overline{\psi_3})\|_{X^{0,-\varepsilon}_r}\lesssim \prod_{j=1}^3\|\psi_j\|_{X^{0,b}_r}.
\]
Then, by using the linear estimates in Lemma \ref{lem:lin} and the contraction mapping principle, there exists a unique solution $\psi\in X^{0,b}_r(I)$, provided that $T>0$ is small enough and $b>\frac{1}{r}$, see \cite{G2} or \cite[Section 2]{G} for details.

\begin{proof}[Proof of Theorem \ref{thm:uu-1d}]
By invariance under time translations we may assume that $|I|$ is sufficiently small.
By assumption $\phi$ satisfies
\begin{equation}\label{eq:mild-phi}
e^{-it \Delta}\phi(t)=\phi_0-i \int_0^t e^{-it'\Delta}|\phi|^2\phi(t')dt', \; t \in I,
\end{equation}
in some Sobolev space of negative order
and $\phi$ conserves the $L^2$-norm.
Let 
\[\psi(t,x)=e^{i t m}\phi(t,x), \qquad m=\frac{1}{\pi} \int_0^{2\pi}|\phi(t,y)|^2dy.\]
Then,
\begin{align*}
i\int_0^t e^{-it'\Delta} (|\psi|^2\psi(t')-m \psi(t'))dt'=&{}i\int_0^t e^{-it'\Delta} |\phi|^2\phi(t') e^{it' m}dt'\\&{}-\int_0^t e^{-it'\Delta} \phi(t')  \partial_{t'} e^{it' m}dt'
\end{align*}
Now, \eqref{eq:mild-phi} and integration by parts yield
\[-\int_0^t e^{-it'\Delta} \phi(t')  \partial_{t'} e^{it' m}dt'=\phi_0 - e^{-it\Delta} \phi(t) e^{it m}-i\int_0^t e^{-it'\Delta}|\phi|^2\phi(t') e^{it' m} dt',\]
which implies
\begin{align*}
i\int_0^t e^{-it'\Delta} (|\psi|^2\psi(t')-m \psi(t'))dt'=&\psi_0- e^{-it\Delta} \psi(t),
\end{align*}
which is equivalent to
\begin{equation}\label{eq:mild-psi}
\psi(t)= e^{it\Delta}\psi_0-i\int_0^t e^{i(t-t')\Delta} (|\psi|^2\psi(t')-m \psi(t'))dt', \; t\in I,
\end{equation}
hence $\psi$ is a mild solution of \eqref{eq:1d-ren-nls} in $L^2(\T)$ with
$\psi \in L^p(I\times \T)$.

Let $f=|\psi|^2\psi-m \psi$, extended by zero outside of $I$. It satisfies $f\in L^{r}(\R \times \T)$ with $r=p/3>1$. The statement of the Theorem is relevant only if $p>3$ is small, so without loss we may assume $p\leq 6$. By Hausdorff-Young we have $\widehat{f}\in  L^{r'}(\R\times \Z)$, in other words $f\in X^{0,0}_r$. Let $\chi \in C^\infty(\R)$ with $\chi(t)=1$ for $t\in [-1,1]$, $\supp(\chi)\subset (-2,2)$ and $\chi_T(t)=\chi(t/T)$ and let \[F(t) := \chi(t)e^{it\Delta}\psi_0-i\chi_T(t)\int_0^t e^{i(t-t')\Delta} f(t') dt'. \]
For $t\in I$ we have $F(t)=\psi(t)$. Observing $L^2(\T)\subset \widehat{L^{r}}(\T)$ for $r\leq 2$,
Lemma \ref{lem:lin} implies $F\in X^{0,1}_{r}$ and we obtain $\psi\in X^{0,1}_{r}(I)$. Due to the uniqueness result in $X^{0,1}_{r}(I)$ explained above the proof is complete.
\end{proof}

\appendix

\section{Proof of Proposition \ref{Product estimate proposition}}
\label{Appendix A}

For completeness, we present the details of the proof of Proposition \ref{Product estimate proposition}. The results here are well-known, some of them can be slightly improved on certain domains (such as tori). We first recall a general fact concerning the boundedness of the spectral projectors $\chi_k$ from the work of Sogge \cite{Sogge}.
Here $\mathbb{D}$ is a general smooth compact Riemannian manifold of dimension $d$ without boundary, which is connected and orientable.
\begin{proposition}
\label{Sogge_estimate}
Let $k \in \mathbb{N}$ be given. 
We have
\begin{equation}
\|\chi_k f \|_{L^{\infty}} \lesssim k^{(d-1)/2}\|f\|_{L^2}\,.
\end{equation}
\end{proposition}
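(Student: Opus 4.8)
The plan is to deduce the $L^2\to L^\infty$ bound from a uniform pointwise bound on the diagonal of a smoothed spectral function, which in turn I would evaluate using the short-time parametrix for the half-wave group $e^{-it\sqrt{-\Delta}}$. First I would fix an orthonormal basis $(e_m)_m$ of $L^2(\mathbb{D})$ consisting of eigenfunctions of $-\Delta$, so that the Schwartz kernel of $\chi_k$ equals $\sum_m e_m(x)\overline{e_m(y)}$, the sum running over the $m$ with $\sqrt{\lambda_m}\in[k-1,k)$. By Cauchy--Schwarz (using orthonormality of the $e_m$) one gets $\|\chi_k\|_{L^2\to L^\infty}^2=\sup_{x\in\mathbb{D}}\sum_m|e_m(x)|^2$, so it suffices to prove $\sup_x\sum_m|e_m(x)|^2\lesssim k^{d-1}$; for $k$ bounded this is trivial since only finitely many eigenvalues are involved, so I may assume $k$ large.

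Next I would dominate this spectral sum by a smooth spectral cutoff. Choosing $\chi\in\mathcal S(\mathbb{R})$ with $\chi\geq 0$ on $\mathbb{R}$, $\chi\geq c>0$ on $[-1,1]$, and $\widehat\chi\in C_c^\infty$ supported in $(-\varepsilon,\varepsilon)$ with $\varepsilon$ below the injectivity radius of $\mathbb{D}$ (such a $\chi$ exists, e.g.\ as a suitable autocorrelation), functional calculus gives that $\chi(k-\sqrt{-\Delta})$ has kernel $\sum_j\chi(k-\sqrt{\lambda_j})e_j(x)\overline{e_j(y)}$; since $\chi\geq 0$ everywhere and $\chi\geq c$ on the band $k-\sqrt{\lambda_j}\in(0,1]$, restricting to the diagonal yields $\sum_m|e_m(x)|^2\leq c^{-1}[\chi(k-\sqrt{-\Delta})](x,x)$. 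Writing $\chi(k-\sqrt{-\Delta})=\frac{1}{2\pi}\int_{\mathbb{R}}\widehat\chi(t)e^{ikt}e^{-it\sqrt{-\Delta}}\,dt$ by Fourier inversion, only $|t|<\varepsilon$ contributes, and for such small times one has, after localizing in space (finite propagation speed of $\cos(t\sqrt{-\Delta})$) and passing to geodesic normal coordinates about a base point, the Hadamard-type parametrix $e^{-it\sqrt{-\Delta}}(x,y)=(2\pi)^{-d}\int_{\mathbb{R}^d}e^{i(\langle x-y,\xi\rangle-t\,p(x,\xi))}a(t,x,y,\xi)\,d\xi$ modulo a smooth remainder, with principal symbol $p(x,\xi)=|\xi|_{g(x)}$ homogeneous of degree $1$ (from solving the eikonal equation, possible precisely because there are no conjugate points below the injectivity radius) and $a$ a classical symbol of order $0$ with $a(0,x,y,\xi)=1$, all bounds uniform by compactness. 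Setting $x=y$ at the center of the coordinate chart, carrying out the $t$-integral by non-stationary phase — it reproduces $\chi(k-|\xi|)$ up to a factor rapidly decaying in $\langle k-|\xi|\rangle$ — and then integrating in $\xi$, which localizes to $|\xi|\sim k$ because $\chi$ is concentrated near the origin, gives $[\chi(k-\sqrt{-\Delta})](x,x)=(2\pi)^{-d}\int_{\mathbb{R}^d}\chi(k-|\xi|)\,d\xi+O(k^{d-2})\sim k^{d-1}\int_{\mathbb{R}}\chi(k-r)\,dr\sim k^{d-1}$, uniformly in $x$. Combining the three steps yields $\|\chi_k\|_{L^2\to L^\infty}\lesssim k^{(d-1)/2}$.

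The main obstacle is Step three: constructing the short-time parametrix for $e^{-it\sqrt{-\Delta}}$ (equivalently for $\cos(t\sqrt{-\Delta})$), i.e.\ solving the eikonal equation on $(-\varepsilon,\varepsilon)\times\mathbb{D}$ and the transport equations for the amplitude, with all estimates uniform in the base point; everything after that is routine stationary/non-stationary phase. Alternatively one may bypass this construction entirely by quoting H\"ormander's pointwise Weyl law $\sum_{\sqrt{\lambda_j}\leq\lambda}|e_j(x)|^2=(2\pi)^{-d}\omega_d\,\lambda^d+O(\lambda^{d-1})$ uniformly in $x$ and subtracting the estimates at $\lambda=k$ and $\lambda=k-1$; and on a rectangular torus the whole statement collapses to the elementary lattice-point count $\#\{n\in\mathbb{Z}^d_g:\ |n|\in[k-1,k)\}\lesssim k^{d-1}$, since there the eigenfunctions are exponentials of constant modulus.
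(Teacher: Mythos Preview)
The paper does not actually prove this proposition; it simply cites \cite[Proposition~2.1]{Sogge}. Your sketch is correct and is, in fact, essentially the argument carried out in Sogge's paper: reduce the operator norm to the supremum of the on-diagonal spectral function, mollify, and evaluate via the short-time parametrix for the half-wave group. The alternative you mention via H\"ormander's pointwise Weyl law is equally standard and indeed is what underlies Sogge's bound; and your observation that on a rectangular torus the whole thing collapses to a lattice-point count is a useful shortcut, since the torus is the only domain on which the paper ultimately relies on this proposition.
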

This result is shown in \cite[Proposition 2.1]{Sogge}. 
Furthermore, we note that the non-endpoint Sobolev embedding holds on $\mathbb{D}$. This estimate suffices for our purposes.

\begin{proposition}[Non-endpoint Sobolev embedding]
\label{Sobolev_embedding}
The following results hold:
\begin{itemize}
\item[(a)]
Let $p \geq 2$ be given. Then we have
\begin{equation*}
\|f\|_{L^p} \lesssim_{p,s} \|f\|_{H^s}
\end{equation*}
for all $s>\frac{d}{2}-\frac{d}{p}$.
\item[(b)] Let $1 \leq p \leq 2$ be given. Then we have 
\begin{equation*}
\|f\|_{H^{-s}} \lesssim_{q,s} \|f\|_{L^p}
\end{equation*}
for all $s>\frac{d}{p}-\frac{d}{2}$.
\end{itemize}
\end{proposition}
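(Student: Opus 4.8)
The plan is to prove part (a) first, using a dyadic (Littlewood--Paley) decomposition together with Sogge's spectral projector bound and elementary $L^p$-interpolation, and then to obtain part (b) from (a) by duality. The only nontrivial analytic input is Proposition \ref{Sogge_estimate}; everything else is bookkeeping.

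For (a), the first step is to upgrade Proposition \ref{Sogge_estimate} to an $L^\infty$-Bernstein inequality for the projectors $P_N$. Since $P_N=\sum_{k\in[N,2N)}\chi_k$ involves $O(N)$ mutually orthogonal pieces with $k\sim N$, Proposition \ref{Sogge_estimate} and the Cauchy--Schwarz inequality in $k$ give
\begin{equation*}
\|P_N f\|_{L^\infty}\;\leq\;\sum_{k\in[N,2N)}\|\chi_k f\|_{L^\infty}\;\lesssim\;\langle N\rangle^{(d-1)/2}\sum_{k\in[N,2N)}\|\chi_k f\|_{L^2}\;\lesssim\;\langle N\rangle^{d/2}\,\|P_N f\|_{L^2},
\end{equation*}
with the case $N=0$ being trivial ($P_0 f$ is constant and $\mathbb{D}$ has finite measure). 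By log-convexity of $L^p$-norms, $\|P_N f\|_{L^p}\leq\|P_N f\|_{L^2}^{2/p}\|P_N f\|_{L^\infty}^{1-2/p}\lesssim\langle N\rangle^{d/2-d/p}\|P_N f\|_{L^2}$ for every $p\in[2,\infty]$. The second step is to sum over dyadic $N$: by the triangle inequality, the splitting $\langle N\rangle^{d/2-d/p}=\langle N\rangle^{d/2-d/p-s}\cdot\langle N\rangle^s$, and Cauchy--Schwarz in $N$, one obtains
\begin{equation*}
\|f\|_{L^p}\;\leq\;\sum_N\|P_N f\|_{L^p}\;\lesssim\;\Big(\sum_N\langle N\rangle^{2(d/2-d/p-s)}\Big)^{1/2}\|f\|_{H^s},
\end{equation*}
and the remaining sum over dyadic $N$ converges precisely because $s>d/2-d/p$. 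This last point is where the strict inequality, and hence the exclusion of the endpoint, is used; it is the only genuine difficulty, and a mild one.

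Part (b) then follows by duality. On a compact manifold, $H^{-s}$ is the dual of $H^s$ with respect to the $L^2(\mathbb{D})$ pairing, so for $1\leq p\leq2$ with conjugate exponent $p'\in[2,\infty]$, H\"older's inequality gives
\begin{equation*}
\|f\|_{H^{-s}}\;=\;\sup_{\|g\|_{H^s}\leq1}\big|\langle f,g\rangle_{L^2}\big|\;\leq\;\|f\|_{L^p}\,\sup_{\|g\|_{H^s}\leq1}\|g\|_{L^{p'}}.
\end{equation*}
Since the hypothesis $s>d/p-d/2$ is exactly the condition $s>d/2-d/p'$ needed to apply part (a) with exponent $p'$, we have $\|g\|_{L^{p'}}\lesssim\|g\|_{H^s}$, which closes the argument. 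On rectangular tori one could equally use the full-range Bernstein inequality of Lemma \ref{Bernstein_inequality_torus} in place of Proposition \ref{Sogge_estimate}, yielding the same (and slightly sharper) conclusions, but Proposition \ref{Sobolev_embedding} as stated is all we need.
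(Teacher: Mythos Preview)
Your proof is correct and follows essentially the same route as the paper: you first establish the Bernstein inequality $\|P_N f\|_{L^p}\lesssim\langle N\rangle^{d/2-d/p}\|P_N f\|_{L^2}$ via Sogge's bound, Cauchy--Schwarz in $k$, and interpolation (this is the paper's Lemma \ref{Bernstein1_lemma}), then sum over dyadic $N$ and use duality for part (b). The only cosmetic difference is that you invoke Cauchy--Schwarz in $N$ to close the sum, whereas the paper leaves that step implicit.
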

Before proceeding to the proof of Proposition \ref{Sobolev_embedding}, we recall the Bernstein inequality on general domains $\mathbb{D}$.
\begin{lemma}[Bernstein's inequality on $\mathbb{D}$]
\label{Bernstein1_lemma}
For all $N$ and $2 \leq p \leq \infty$, we have
\begin{equation}
\label{Bernstein2_lemma}
\|P_N f\|_{L^p} \lesssim \langle N \rangle^{d/2-d/p} \|P_Nf\|_{L^2}\,.
\end{equation}

\end{lemma}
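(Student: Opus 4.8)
The plan is to reduce everything to the endpoint exponent $p=\infty$ and then interpolate trivially against $p=2$. The only input needed is Sogge's spectral projector bound, Proposition \ref{Sogge_estimate}. First I would dispose of the trivial low-frequency case $N=0$: since $\mathbb{D}$ is connected, $\chi_0 = P_0$ projects onto the one-dimensional space of constants, so $\|P_0 f\|_{L^p} \lesssim \|P_0 f\|_{L^2}$ holds for all $p$, and $\langle 0\rangle = 1$. Hence assume $N\geq 1$.

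For such $N$, recall $P_N = \sum_{k\in[N,2N)}\chi_k$, a sum of $O(N)$ terms. By the triangle inequality and Proposition \ref{Sogge_estimate},
\[
\|P_N f\|_{L^\infty} \;\leq\; \sum_{k\in[N,2N)} \|\chi_k f\|_{L^\infty} \;\lesssim\; \sum_{k\in[N,2N)} k^{(d-1)/2}\,\|\chi_k f\|_{L^2}.
\]
The key step is to apply the Cauchy--Schwarz inequality in the summation variable $k$ rather than a naive triangle inequality (which would cost a power of $N$):
\[
\sum_{k\in[N,2N)} k^{(d-1)/2}\,\|\chi_k f\|_{L^2} \;\leq\; \Big(\sum_{k\in[N,2N)} k^{d-1}\Big)^{1/2}\Big(\sum_{k\in[N,2N)} \|\chi_k f\|_{L^2}^2\Big)^{1/2}.
\]
The first factor is $\lesssim N^{d/2}$ because the sum has $\lesssim N$ terms each of size $\lesssim N^{d-1}$, and the second factor equals $\|P_N f\|_{L^2}$ by orthogonality of the eigenspaces $\mathcal{H}_j$. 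This yields the endpoint estimate $\|P_N f\|_{L^\infty} \lesssim \langle N\rangle^{d/2}\|P_N f\|_{L^2}$.

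To conclude, for general $2\leq p\leq\infty$ I would interpolate, using $\|g\|_{L^p}\leq \|g\|_{L^2}^{2/p}\|g\|_{L^\infty}^{1-2/p}$ with $g=P_N f$ (valid since $\mathbb{D}$ has finite measure), obtaining
\[
\|P_N f\|_{L^p} \;\lesssim\; \|P_N f\|_{L^2}^{2/p}\big(\langle N\rangle^{d/2}\|P_N f\|_{L^2}\big)^{1-2/p} \;=\; \langle N\rangle^{d/2-d/p}\,\|P_N f\|_{L^2},
\]
which is \eqref{Bernstein2_lemma}. I do not anticipate any real obstacle; the two points requiring mild care are the Cauchy--Schwarz step above (to avoid losing a power of $N$) and the separate handling of $N=0$. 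It is worth noting that, unlike Lemma \ref{Bernstein_inequality_torus} on rectangular tori, this argument only delivers the range $p\geq 2$, since Proposition \ref{Sogge_estimate} is the sole tool available on a general compact manifold.
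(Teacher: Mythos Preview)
Your proof is correct and follows essentially the same route as the paper's: prove the $p=\infty$ endpoint via the triangle inequality, Sogge's spectral cluster bound (Proposition~\ref{Sogge_estimate}), and Cauchy--Schwarz in $k$, then interpolate with the trivial $p=2$ case. Your treatment is slightly more detailed (you dispose of $N=0$ explicitly and write out the Cauchy--Schwarz step), but the argument is the same.
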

\begin{proof}[Proof of Lemma \ref{Bernstein1_lemma}]
We first prove \eqref{Bernstein2_lemma} for $p=\infty$. Note that, for fixed $N$
\begin{equation*}
\|P_N f\|_{L^{\infty}} \leq \sum_{k \sim N} \|\chi_kf\|_{L^\infty}\,,
\end{equation*}
which by Proposition \ref{Sogge_estimate} is
\begin{equation*}
\lesssim \sum_{k \sim N} k^{(d-1)/2} \|\chi_kf\|_{L^2}
\lesssim \langle N \rangle^{d/2} \, \|P_Nf\|_{L^2}\,.
\end{equation*}
In the second step above, we used the Cauchy-Schwarz inequality in $k$.
The claim for general $2 \leq p \leq \infty$ follows from \eqref{Bernstein2_lemma} with $p=\infty$ by using interpolation.
\end{proof}

\begin{proof}[Proof of Proposition \ref{Sobolev_embedding}]
We observe that part (b) follows from part (a) by duality. Therefore, it suffices to prove part (a). By Lemma \ref{Bernstein1_lemma} we have
\begin{equation*}
\|f\|_{L^p} 
\lesssim \sum_N \langle N \rangle^{d/2-d/p-s} \, \langle N\rangle^s\,\|P_Nf\|_{L^2} \lesssim \|f\|_{H^s}\,,
\end{equation*}
whenever $s>\frac{d}{2}-\frac{d}{p}$. 
\end{proof}

Before proving Proposition \ref{Product estimate proposition}, we prove an auxiliary estimate for the product of two functions.
\begin{lemma}
\label{Product estimate lemma 1}
Suppose that $\varrho_1,\varrho_2 \in (0,\frac{d}{2})$ are given. Then, for all $\delta>0$ we have
\begin{equation*}
\|f_1 f_2\|_{H^{\varrho_1+\varrho_2-\frac{d}{2}}} \lesssim_{\delta} \|f_1\|_{H^{\varrho_1+\delta}} \|f_2\|_{H^{\varrho_2+\delta}}\,.
\end{equation*}
\end{lemma}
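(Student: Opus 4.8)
The plan is to treat separately the cases in which the target regularity $s:=\varrho_1+\varrho_2-\tfrac{d}{2}$ is nonpositive or positive; note that $s\in(-\tfrac{d}{2},\tfrac{d}{2})$.

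\emph{The case $s\le 0$.} Here the estimate is purely a matter of Hölder's inequality and Sobolev embedding. Since $\langle N\rangle^{2s}\le 1$ for every $N$, we have $\|f_1f_2\|_{H^s}\le\|f_1f_2\|_{L^2}$. Choose $p_1,p_2$ with $\tfrac{1}{p_1}+\tfrac{1}{p_2}=\tfrac{1}{2}$ and $\tfrac{d}{p_i}=\tfrac{d}{2}-\varrho_i$; since $\varrho_i\in(0,\tfrac{d}{2})$ we have $p_i\in[2,\infty)$. Then Hölder's inequality followed by Proposition \ref{Sobolev_embedding}(a) (applicable because $\varrho_i+\delta>\tfrac{d}{2}-\tfrac{d}{p_i}=\varrho_i$) yields $\|f_1f_2\|_{L^2}\le\|f_1\|_{L^{p_1}}\|f_2\|_{L^{p_2}}\lesssim_{\delta}\|f_1\|_{H^{\varrho_1+\delta}}\|f_2\|_{H^{\varrho_2+\delta}}$.

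\emph{The case $s>0$.} Decompose $f_i=\sum_{N_i}f_{i,N_i}$ with $f_{i,N_i}:=P_{N_i}f_i$, so that $\|f_1f_2\|_{H^s}^2\sim\sum_N\langle N\rangle^{2s}\|P_N(f_1f_2)\|_{L^2}^2$, and fix two dyadic frequencies $N_1\ge N_2$ (the opposite case being symmetric). Although on a general manifold $\mathbb{D}$ the spectrum of a product is not literally contained in the sum of the spectra, $f_{1,N_1}f_{2,N_2}$ is spectrally concentrated at frequencies $\lesssim N_1$: for $N\gg N_1$, applying powers of $-\Delta$, expanding by the Leibniz rule, and invoking Bernstein's inequality on $\mathbb{D}$ (Lemma \ref{Bernstein1_lemma}) shows that $\|P_N(f_{1,N_1}f_{2,N_2})\|_{L^2}$ decays faster than any power of $N/N_1$, so that $\sum_N\langle N\rangle^s\|P_N(f_{1,N_1}f_{2,N_2})\|_{L^2}\lesssim\langle N_1\rangle^s\|f_{1,N_1}f_{2,N_2}\|_{L^2}$. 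Applying Hölder's inequality with $\tfrac{1}{p_1}+\tfrac{1}{p_2}=\tfrac{1}{2}$, $\tfrac{d}{p_1}=\varrho_2$, $\tfrac{d}{p_2}=\tfrac{d}{2}-\varrho_2$ (so $p_1,p_2\in[2,\infty)$), and then Lemma \ref{Bernstein1_lemma}, we obtain
\[
\langle N_1\rangle^s\|f_{1,N_1}f_{2,N_2}\|_{L^2}\lesssim\langle N_1\rangle^{s+\frac{d}{2}-\varrho_2}\langle N_2\rangle^{\varrho_2}\|f_{1,N_1}\|_{L^2}\|f_{2,N_2}\|_{L^2}=\langle N_1\rangle^{\varrho_1}\langle N_2\rangle^{\varrho_2}\|f_{1,N_1}\|_{L^2}\|f_{2,N_2}\|_{L^2},
\]
using $s+\tfrac{d}{2}-\varrho_2=\varrho_1$. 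It remains to sum over all $N_1\ge N_2$ (and over the symmetric region). Writing $c_N:=\langle N\rangle^{\varrho_1+\delta}\|f_{1,N}\|_{L^2}$ and $d_N:=\langle N\rangle^{\varrho_2+\delta}\|f_{2,N}\|_{L^2}$, so that $\|c\|_{\ell^2}=\|f_1\|_{H^{\varrho_1+\delta}}$ and $\|d\|_{\ell^2}=\|f_2\|_{H^{\varrho_2+\delta}}$, we have $\langle N_1\rangle^{\varrho_1}\langle N_2\rangle^{\varrho_2}\|f_{1,N_1}\|_{L^2}\|f_{2,N_2}\|_{L^2}=\langle N_1\rangle^{-\delta}\langle N_2\rangle^{-\delta}c_{N_1}d_{N_2}$, and two applications of the Cauchy--Schwarz inequality (first summing over $N_2\le N_1$, then over $N_1$), together with $\sum_N\langle N\rangle^{-2\delta}<\infty$ over dyadic $N$, bound the total sum by $\lesssim_{\delta}\|c\|_{\ell^2}\|d\|_{\ell^2}$.

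\emph{Main obstacle.} The case $s\le 0$ is routine; the work is in the case $s>0$, where two points require care: (i) the spectral near-localization of the product on a general domain $\mathbb{D}$, which must be justified via the Leibniz rule and Bernstein's inequality with an arbitrarily fast decay rate (on a torus this is immediate from the support properties of convolution); and (ii) the choice of Hölder exponents $\tfrac{d}{p_1}=\varrho_2$, $\tfrac{d}{p_2}=\tfrac{d}{2}-\varrho_2$, tuned so that the two Bernstein factors recombine precisely to the weight $\langle N_1\rangle^{\varrho_1}\langle N_2\rangle^{\varrho_2}$ that the Schur-type summation (using the $\delta$-room) can absorb.
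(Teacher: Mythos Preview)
Your case $s>0$ is essentially the paper's argument, and the overall plan is right. But there is a genuine gap in the case $s\le 0$.

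You write: ``Choose $p_1,p_2$ with $\tfrac{1}{p_1}+\tfrac{1}{p_2}=\tfrac{1}{2}$ and $\tfrac{d}{p_i}=\tfrac{d}{2}-\varrho_i$.'' These two conditions are incompatible unless $s=0$: from the second you get $\tfrac{1}{p_1}+\tfrac{1}{p_2}=1-\tfrac{\varrho_1+\varrho_2}{d}=\tfrac12-\tfrac{s}{d}$, which equals $\tfrac12$ only when $s=0$. For $s<0$ the product $f_1f_2$, with $f_i\in H^{\varrho_i+\delta}$, need not lie in $L^2$ at all (e.g.\ take $\varrho_1=\varrho_2=d/8$, so $f_i\in L^{8/3+}$ and the product is only in $L^{4/3+}$). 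The step $\|f_1f_2\|_{H^s}\le\|f_1f_2\|_{L^2}$ therefore discards exactly the negative regularity that is needed. The fix, as in the paper, is to use the dual Sobolev embedding (Proposition~\ref{Sobolev_embedding}(b)): for $s\le 0$ choose $q\in(1,2]$ with $\tfrac{d}{q}-\tfrac{d}{2}<-s$, so that $\|f_1f_2\|_{H^s}\lesssim\|f_1f_2\|_{L^q}$, and only then apply H\"older and the forward embedding.

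A smaller imprecision in the $s>0$ case: your intermediate claim $\sum_N\langle N\rangle^s\|P_N(f_{1,N_1}f_{2,N_2})\|_{L^2}\lesssim\langle N_1\rangle^s\|f_{1,N_1}f_{2,N_2}\|_{L^2}$ is not what the Leibniz--Bernstein argument actually gives. One obtains $\|P_N(f_{1,N_1}f_{2,N_2})\|_{L^2}\lesssim_k \langle N\rangle^{-2k}\|\Delta^k(f_{1,N_1}f_{2,N_2})\|_{L^2}$, and after expanding $\Delta^k$ the bound is in terms of $\|f_{1,N_1}\|_{L^2}\|f_{2,N_2}\|_{L^2}$ with extra powers of $N_1,N_2$ (coming from the $L^\infty$--$L^2$ H\"older step), not in terms of $\|f_{1,N_1}f_{2,N_2}\|_{L^2}$. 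Since your next line immediately passes to the individual $L^2$ norms anyway, this does not affect the outcome; just state the high-frequency tail bound directly in terms of $\|f_{1,N_1}\|_{L^2}\|f_{2,N_2}\|_{L^2}$, as the paper does.
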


\begin{proof}[Proof of Lemma \ref{Product estimate lemma 1}]
Let us write $\varrho:=\varrho_1+\varrho_2-\frac{d}{2}$. We consider two cases.

\medskip

\textbf{Case 1:} $\varrho>0$.
We note that by \eqref{Inclusion}, it suffices to estimate $\|f_1f_2\|_{B^{\varrho}_{2,1}}$, which by \eqref{Besov space M} and the triangle inequality is 
\begin{multline}
\leq \mathop{\sum_{N,N_1,N_2}}_{\mathrm{max}\{N_1,N_2\} \gtrsim N} \langle  N \rangle^{\varrho} \|P_N(P_{N_1}f_1P_{N_2}f_2)\|_{L^2} \,\,+  \mathop{\sum_{N,N_1,N_2}}_{N \gg N_1,N_2} \langle N \rangle^{\varrho} \|P_N(P_{N_1}f_1P_{N_2}f_2)\|_{L^2}
\\
=:I+II\,.
\label{I+II}
\end{multline}
We first estimate the term $I$. By symmetry it suffices to estimate the expression
\begin{equation*}
I':=\mathop{\sum_{N,N_1,N_2}}_{N_1 \gtrsim N} \langle N \rangle^{\varrho} \|P_N(P_{N_1}f_1P_{N_2}f_2)\|_{L^2}\,.
\end{equation*}
Since $\varrho>0$, we can sum in $N$ and obtain that
\begin{equation}
\label{I' bound}
I' \lesssim_{\varrho} \sum_{N_1,N_2} \langle N_1 \rangle^{\varrho} \|P_{N_1}f_1P_{N_2}f_2\|_{L^2}\,. 
\end{equation}
We choose $p_1,p_2$ such that $\frac{1}{p_1}=\frac{\varrho_2}{d}$ and $\frac{1}{p_2}=\frac{1}{2}-\frac{\varrho_2}{d}$. By assumption on $\varrho_2$, it follows that $p_1,p_2 \in (2,\infty)$. Hence, applying H\"{o}lder's inequality, Lemma \ref{Bernstein1_lemma}, and the identities
$\varrho+\tfrac{d}{2}-\frac{d}{p_1}=\varrho_1, \frac{d}{2}-\tfrac{d}{p_2}=\varrho_2$ in \eqref{I' bound} it follows that 
\begin{equation*}
I' \lesssim \sum_{N_1,N_2} \langle N_1 \rangle^{\varrho_1}\|P_{N_1}f_1\|_{L^2} \,\langle N_2 \rangle^{\varrho_2} \|P_{N_2}f_2\|_{L^2} = \|f_1\|_{B^{\varrho_1}_{2,1}} \|f_2\|_{B^{\varrho_2}_{2,1}}\,,
\end{equation*} 
which by \eqref{Inclusion} is an acceptable bound.

In order to estimate $II$ in \eqref{I+II} we first consider fixed $N$ and $N_1,N_2 \ll N$. Let us assume that $N_1 \leq N_2$.
We estimate the quantity $\|P_N(P_{N_1}f_1P_{N_2}f_2)\|_{L^2}$ by duality.
Namely, for $g \in L^2$ and $k \in \mathbb{N}$ we note that, by the self-adjointness of $\Delta$,
\begin{align}
\notag
&\bigg| \int P_{N_1} f_1 \, P_{N_2} f_2 \,P_N g \, \dd x \bigg| = \bigg| \int \Delta^k (P_{N_1} f_1 \, P_{N_2} f_2) \, \Delta^{-k} P_N g \, \dd x \bigg|
\\
\notag
&\leq \|\Delta^k (P_{N_1} f_1 \, P_{N_2} f_2)\|_{L^2} \, \|\Delta^{-k} P_N g\|_{L^2} 
\\
\label{Integration_by_parts_bound}
&\lesssim_k \langle N \rangle^{-2k}\,\langle N_1 \rangle ^{d/2} \, \langle N_2 \rangle^{2k} 
\|P_{N_1 f_1}\|_{L^2}\, \|P_{N_2} f_2\|_{L^2}\, \|g\|_{L^2}\,.
\end{align}
Note that, in order to obtain \eqref{Integration_by_parts_bound}, we used a Leibniz rule for $\Delta^k$, H\"{o}lder's inequality, Lemma \ref{Bernstein1_lemma} with $p=\infty$, together with the assumption that $N_1 \leq N_2$. 
Moreover, we used the observation that 
\begin{equation*}
\|\Delta^{-k} P_N\|_{L^2 \rightarrow L^2} \lesssim_k \langle N \rangle^{-2k}\,. 
\end{equation*}
We hence obtain from \eqref{Integration_by_parts_bound} and duality that for all $k \in \mathbb{N}$ 
\begin{equation}
\label{Integration_by_parts_bound2}
\|P_N(P_{N_1}f_1P_{N_2}f_2)\|_{L^2} \lesssim_k \langle N \rangle^{-2k}\,\langle N_1 \rangle ^{d/2} \, \langle N_2 \rangle^{2k} 
\|P_{N_1 f_1}\|_{L^2}\, \|P_{N_2} f_2\|_{L^2} \,.
\end{equation}
Let us observe that if $k>\tfrac{\varrho_2}{2}$, we have for $\delta>0$ 
\begin{align*}
&\mathop{\sum_{N,N_1,N_2}}_{N_1 \leq N_2 \ll N}\, \langle N \rangle^{\varrho_1+\varrho_2-d/2-2k} \, \langle N_1 \rangle^{d/2-\varrho_1-\delta} \, \langle N_2 \rangle^{2k-\varrho_2-\delta} \\
\lesssim{}& \mathop{\sum_{N,N_1,N_2}}_{N_1 \leq N_2 \ll N}\, \bigg(\frac{\langle N_1 \rangle}{\langle N \rangle}\bigg)^{d/2-\varrho_1} \, \langle N_1 \rangle^{-\delta} \, \langle N_2 \rangle^{-\delta}
\lesssim{} 1.
\end{align*}
Substituting this bound into \eqref{Integration_by_parts_bound2} and using \eqref{Inclusion}, we obtain the desired estimate on $II$ in \eqref{I+II}.

\medskip

\textbf{Case 2:} $\varrho \leq 0$. 
We let $q_1,q_2$ be such that $\frac{1}{q_1}=\frac{1}{2}-\frac{\varrho_1}{d}$ and $\frac{1}{q_2}=\frac{1}{2}-\frac{\varrho_2}{d}$. By the assumptions on $\varrho_1,\varrho_2$, it follows that $q_1,q_2 \in (2,\infty)$. Hence $\frac{1}{q_1}+\frac{1}{q_2}=\frac{1}{q}$ for some $q \in (1,\infty)$. For such $q$ we have
$\frac{1}{2}=\frac{1}{q}+\frac{\varrho}{d}$. Since $\varrho \leq 0$ we can use Proposition \ref{Sobolev_embedding} (b) with $s=-\varrho$ and $\tilde{q}=q+\delta_0$, with $\delta_0>0$, to deduce that
\begin{equation}
\label{Case2_bound}
\|f_1 f_2\|_{H^{\varrho}} \lesssim \|f_1f_2\|_{L^{\tilde{q}}}\,.
\end{equation}
Using H\"{o}lder's inequality, it follows that the right-hand side of \eqref{Case2_bound} is 
\begin{equation*}
\leq \|f_1\|_{L^{q_1+\delta_1}} \|f_2\|_{L^{q_2+\delta_2}}\,,
\end{equation*}
for appropriate $\delta_1,\delta_2>0$.
By Proposition \ref{Sobolev_embedding} (a), this expression is
\begin{equation*}
 \lesssim \|f_1\|_{H^{\varrho_1+\delta}} \|f_2\|_{H^{\varrho_2+\delta}}\,,
\end{equation*}
provided that we choose the parameters $\delta_0,\delta_1,\delta_2$ small in terms of $\delta$.
This is an admissible upper bound.
\end{proof}

We can now prove Proposition \ref{Product estimate proposition}.
\begin{proof}[Proof of Proposition \ref{Product estimate proposition}]
We note that, by the assumptions of the proposition, we have $2\varrho-\frac{d}{2} \in (0,\frac{d}{2})$. By applying Lemma \ref{Product estimate lemma 1} to $f_1f_2$ and $f_3$, we obtain
\begin{equation*}
\|f_1 f_2 f_3 \|_{H^{(2\varrho-\frac{d}{2})+\varrho-\frac{d}{2}}} \lesssim_{\delta} \|f_1f_2\|_{H^{(2\varrho-\frac{d}{2})+\delta/2}} \|f_3\|_{H^{\varrho+\delta/2}}\,,
\end{equation*}
We can hence apply Lemma \ref{Product estimate lemma 1} again to estimate the first factor above and obtain the claim.
\end{proof}

\section{Proof of Lemma \ref{Bernstein_inequality_torus}}\label{appendix-bern}
After an anisotropic rescaling of the torus we may assume that $\theta_1=\cdots=\theta_d=1$ and 
$\tilde{P}_N$ is of the form
\[
\tilde{P}_N f=\sum_{k \in \Z^d}\rho_N(k) \widehat{f}(k) e^{i x\cdot k}
\]
for $\rho_N(k)=\rho(k/N)$ if $N\geq 1$,  with some functions $\rho,\rho_0 \in \mathcal{S}(\R^d)$, and the standard $k$-th Fourier coefficient $\widehat{f}(k)$ of $f$.
By the Poisson summation formula we have
\[
\tilde{P}_N f =\psi_N \ast_{\T^d} f, \text{ where } \psi_N(x)= (2\pi)^{d/2}\,\sum_{k \in \Z^d} \mathcal{F}^{-1}(\rho_N)(x+k),
\]
where $\mathcal{F}$ denotes the Fourier transform on $\R^d$. It is enough to consider $N\geq 1$. Then,
\begin{align*}
\|\psi_N\|_{L^1([0,2\pi]^d)}\lesssim{}& \sum_{k \in \Z^d} \|\mathcal{F}^{-1}(\rho_N)\|_{L^1(k+[0,2\pi]^d)}\leq \|\mathcal{F}^{-1}(\rho_N)\|_{L^1(\R^d)}\\
={}& \|\mathcal{F}^{-1}(\rho)\|_{L^1(\R^d)}.
\end{align*}
Also, for any $x$ we have
\[|\psi_N(x)| \lesssim N^d \sum_{k\in \Z^d} \langle N (x+k)\rangle^{-d-1}\lesssim N^d,\]
and therefore
\[
\|\psi_N\|_{L^r([0,2\pi]^d)}^r \lesssim{}\|\psi_N\|_{L^1([0,2\pi]^d)}\|\psi_N\|^{r-1}_{L^\infty ([0,2\pi]^d)} \lesssim{}N^{d(r-1)}
\]
for any $1\leq r \leq \infty$.
If $1\leq p \leq q\leq \infty$, Young's inequality  implies
\[\|\tilde{P}_N f\|_{L^q} \lesssim \langle N\rangle^{d(1-1/r)} \|f\|_{L^p} \]
where $1-\frac{1}{r}=\frac{1}{p}-\frac{1}{q}$.
\section{Linear estimates in Fourier-Lebesgue spaces}\label{sec:lin-est}
Here, we include the linear estimates used in the proof of Theorem \ref{thm:uu-1d}, which are also needed to perform the contraction argument. These are due to Gr\"unrock, see \cite[Section 2]{G}. Here, we adapt the argument given in \cite[Proof of Lemma 7.1]{GH}, which follows the strategy of \cite[Lemma 3.1]{CKSTT}.
\begin{lemma}\label{lem:lin}
Let $1<r<\infty$, $b \geq 0$.  Let $\chi \in C^\infty(\R)$ with $\chi(t)=1$ for $t\in [-1,1]$, $\supp(\chi)\subset (-2,2)$ and $\chi_T(t)=\chi(t/T)$.
For all $0<T\leq 1$ and $\phi_0 \in \widehat{L^r}$ we have $\chi_T e^{it\Delta}\phi_0 \in X^{0,b}_r$ and
\[
\big\|\chi_T e^{it\Delta}\psi_0 \big\|_{X_r^{0,b}}\lesssim T^{\frac{1}{r}-b} \|\psi_0\|_{\widehat{L^r}}.
\] 
Further, if $\beta\geq b-1$ and $\tfrac{1}{r}>\beta>-\tfrac{1}{r'}$, for all $f \in X^{0,\beta}_r$ we have
\[\chi_T\int_0^t e^{i(t-t')\Delta} f(t') dt'\in X^{0,b}_r\]
and
\[\Big\|\chi_T \int_0^t e^{i(t-t')\Delta} f(t') dt'\Big\|_{X^{0,b}_r}\lesssim T^{1+\beta-b} \|f\|_{X^{0,\beta}_r}.\]
\end{lemma}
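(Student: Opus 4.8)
The plan is to treat the two estimates separately and, in both cases, to strip off the free Schr\"odinger group so that the $X^{s,b}_r$-norms collapse to plain weighted Fourier--Lebesgue norms that can be computed explicitly. For the homogeneous estimate I would compute the space-time Fourier transform directly: since $e^{it\Delta}$ is a Fourier multiplier in $x$, one has $\mathcal{F}_{t,x}\big[\chi_T e^{it\Delta}\psi_0\big](\tau,\xi)=c\,\widehat{\chi_T}(\tau+|\xi|^2)\,\widehat{\psi_0}(\xi)$ up to the absolute constant fixed by the convention in the definition of $X^{s,b}_r$. Inserting this into the definition of $\|\cdot\|_{X^{0,b}_r}$ and changing variables $\sigma=\tau+|\xi|^2$ in the inner integral decouples the $\sigma$- and $\xi$-integrations, giving
\[
\big\|\chi_T e^{it\Delta}\psi_0\big\|_{X^{0,b}_r}=c\,\big\|\langle\sigma\rangle^{b}\widehat{\chi_T}\big\|_{L^{r'}_\sigma}\,\|\psi_0\|_{\widehat{L^r}}.
\]
It then remains only to show $\|\langle\sigma\rangle^{b}\widehat{\chi_T}\|_{L^{r'}_\sigma}\lesssim T^{1/r-b}$. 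Writing $\widehat{\chi_T}(\sigma)=T\,\widehat{\chi}(T\sigma)$, rescaling, and using $\langle\sigma/T\rangle\le T^{-1}\langle\sigma\rangle$ for $0<T\le1$, this reduces to $\|\langle\cdot\rangle^{b}\widehat{\chi}\|_{L^{r'}}<\infty$, which holds because $\chi\in C^\infty_c$ and hence $\widehat{\chi}$ is rapidly decreasing. The same rescaling gives $\|\langle\cdot\rangle^{b}\widehat{\chi_T}\|_{L^1_\sigma}\lesssim T^{-b}$, a bound I will reuse.

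For the Duhamel estimate I would first write $e^{i(t-t')\Delta}=e^{it\Delta}e^{-it'\Delta}$ and set $g(t'):=e^{-it'\Delta}f(t')$, so that $\|f\|_{X^{0,\beta}_r}=\|\langle\tau\rangle^{\beta}\widehat{g}(\tau,\xi)\|_{L^{r'}_{\tau,d\mu}}$ becomes a flat weighted norm on $g$. Using the elementary identity $\int_0^t g(t')\,dt'=c\int_\R\frac{e^{it\tau}-1}{i\tau}\,\widehat{g}_t(\tau)\,d\tau$ in the time Fourier variable, I would then split, following \cite[Lemma~3.1]{CKSTT} and its adaptation in \cite[Proof of Lemma~7.1]{GH}, by a smooth cutoff at modulation $|\tau|\sim 1/T$ into three pieces. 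First, the high-modulation oscillatory part $\int_{|\tau|\gtrsim 1/T}\frac{e^{it\tau}}{i\tau}\widehat{g}_t(\tau)\,d\tau$: after restoring $e^{it\Delta}$ its space-time Fourier transform is $\widehat{\chi_T}$ convolved in the modulation variable with a multiple of $\langle\tau+|\xi|^2\rangle^{-1}\widehat{f}(\tau,\xi)$, so the extra factor $\langle\tau+|\xi|^2\rangle^{-1}$ converts the $\beta$-weight on $f$ into a $b$-weight precisely because $\beta\ge b-1$; the $T$-gain is extracted from the fact that the relevant modulations are $\gtrsim 1/T$, combined with Young's inequality $L^1\ast L^{r'}\to L^{r'}$ and the $L^1$ bound from the first part. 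Second, the high-modulation constant-in-$t$ part $-\int_{|\tau|\gtrsim 1/T}\frac{1}{i\tau}\widehat{g}_t(\tau)\,d\tau$ equals $\chi_T(t)e^{it\Delta}h$ for a fixed spatial distribution $h$; the first part of the lemma applies, and the needed bound $\|h\|_{\widehat{L^r}}\lesssim T^{1+\beta-1/r}\|f\|_{X^{0,\beta}_r}$ follows by H\"older in $\tau$ from $\big\|\mathbf{1}_{|\tau|\gtrsim 1/T}\,|\tau|^{-1}\langle\tau\rangle^{-\beta}\big\|_{L^r_\tau}\lesssim T^{1+\beta-1/r}$, which is exactly where the hypothesis $\beta>-1/r'$ (i.e.\ $1+\beta>1/r$) makes the integral converge; multiplying by the $T^{1/r-b}$ from the first part gives the target power $T^{1+\beta-b}$. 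Third, the low-modulation part $\int_{|\tau|\lesssim 1/T}\frac{e^{it\tau}-1}{i\tau}\widehat{g}_t(\tau)\,d\tau$, where $\big|\tfrac{e^{it\tau}-1}{i\tau}\big|\lesssim|t|\lesssim T$ on $\supp\chi_T$; expanding $e^{it\tau}-1$ in powers of $t\tau$ and arguing term by term reduces matters again to homogeneous-type estimates with the modified cutoffs $\chi_T(t)t^k$ (which supply the powers of $T$), the series being summed using $\tfrac1r>\beta$ and $\beta\ge b-1$.

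The part I expect to be most delicate is the bookkeeping of the powers of $T$ in the Duhamel estimate, since in the three pieces the factor $T^{1+\beta-b}$ arises through different mechanisms --- the localization of $\widehat{\chi_T}$ to the high-modulation region in the first piece, the interplay of $\|h\|_{\widehat{L^r}}\lesssim T^{1+\beta-1/r}$ with the $T^{1/r-b}$ of the homogeneous estimate in the second, and the Taylor coefficients of $\chi_T(t)t^k$ in the third --- and one must verify in each case that the resulting exponent is precisely $1+\beta-b$, rather than something weaker, so that both hypotheses $\beta\ge b-1$ and $\beta>-1/r'$ are used at their sharp threshold. A clean way to organize this is to isolate, once and for all, the ``time-localization'' estimate $\|\chi_T v\|_{X^{0,b}_r}\lesssim T^{b'-b}\|v\|_{X^{0,b'}_r}$ for $b\le b'$ with both exponents in $(-\tfrac1{r'},\tfrac1r)$ (proved by the same Fourier computation as in the first part, now for a general $v$), and then feed into it the core un-localized Duhamel bound. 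Once the first part and this localization lemma are in place, the remaining steps are routine, as every piece is ultimately controlled by a free-evolution-type bound.
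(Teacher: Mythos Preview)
Your treatment of the homogeneous estimate is exactly the paper's: compute the space--time Fourier transform, change variables to the modulation $\sigma=\tau+|\xi|^2$, and reduce to $\|\langle\sigma\rangle^b\widehat{\chi_T}\|_{L^{r'}}\lesssim T^{1/r-b}$.

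For the Duhamel term, however, your decomposition is genuinely different from the paper's. You follow the original three--piece split of \cite[Lemma~3.1]{CKSTT}: write $\int_0^t g(t')\,dt'$ via $\frac{e^{it\tau}-1}{i\tau}$, cut at $|\tau|\sim 1/T$, and handle separately the high oscillatory piece, the high constant piece, and the low--modulation Taylor piece. The paper instead uses the sign--function identity $\sigma_T(t')+\sigma_T(t-t')=\mathbf{1}_{[0,t]}(t')$ (valid on the support of $\chi_T$), with $\sigma_T(s)=\tfrac12\chi_{4T}(s)\,\mathrm{sign}(s)$, which yields only two pieces: a term $\chi_T e^{it\Delta}I_1$ with $I_1$ a fixed spatial function (handled by the homogeneous estimate together with $|\widehat{\sigma_T}(\tau)|\lesssim T\langle T\tau\rangle^{-1}$ and H\"older in $\tau$, this being where both constraints $-\tfrac{1}{r'}<\beta<\tfrac1r$ enter), and a time--convolution term $\chi_T I_2$ whose Fourier transform factors and is estimated via the pointwise multiplier bound plus the weight--splitting $\langle\tau+|\xi|^2\rangle^b\lesssim\langle\tau'+|\xi|^2\rangle^b+|\tau-\tau'|^b$. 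Your approach is correct and more widely known; the paper's buys a shorter argument with no infinite series. One caution: in your Piece~1 you will need exactly that additive weight--splitting (or the equivalent two applications of Young with $\|\widehat{\chi_T}\|_{L^1}$ and $\||\cdot|^b\widehat{\chi_T}\|_{L^1}$ separately) to avoid losing an extra $T^{-b}$; and your ``clean'' alternative via the localization lemma $\|\chi_T v\|_{X^{0,b}_r}\lesssim T^{b'-b}\|v\|_{X^{0,b'}_r}$ is restricted to $b<\tfrac1r$, so it does not by itself cover the full range $b\ge 0$ allowed in the statement --- you should keep the direct piece--by--piece argument as the primary one.
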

\begin{proof}
 If $\mathcal{F}$ denotes the space-time Fourier transform, we obtain
\[
\mathcal{F}(\chi_T(t)e^{it\Delta}\psi_0 )(\tau,\xi)=T\widehat{\chi}(T(\tau+|\xi|^2))\widehat{\psi_0}(\xi),
\]
and, using $\widehat{\chi}\in \mathcal{S}(\R)$,
\begin{equation}\label{eq:free}
\begin{split}
\|\chi_T e^{it\Delta}\psi_0 \|_{X_r^{0,b}}={}&\Big(\int |\widehat{\psi_0}(\xi)|^{r'}\int_\R \ang{\tau+|\xi|^2}^{br'}| T\widehat{\chi}(T(\tau+|\xi|^2))|^{r'}d\tau d\mu (\xi)\Big)^{\frac{1}{r'}}\\
\lesssim{}& T^{\frac{1}{r}-b}  \|\psi_0\|_{\widehat{L^r}},
\end{split}
\end{equation}
for any $b\in \R$. For the integrable function $\sigma_T(t')=\tfrac12 \chi_{4T}(t')\mathrm{sign}(t')$ and $|t'|, |t|\leq 2T$ we have $\sigma_T(t')+\sigma_T(t-t')=1$ if $0\leq t'\leq t$ and zero otherwise. Hence, we can decompose
\[
\chi_T(t)\int_0^t e^{i(t-t')\Delta} f(t') dt'=\chi_T(t)e^{it\Delta}I_1+\chi_T(t)I_2(t)\]
where
\begin{align*}
I_1=\int_\R  \sigma_T(t') e^{-it'\Delta} f(t') dt', \qquad
 I_2(t)=\int_\R e^{i(t-t')\Delta} \sigma_T(t-t') f(t') dt'.
\end{align*}
An application of \eqref{eq:free} gives
\[
\|\chi_Te^{it\Delta}I_1\|_{X_r^{0,b}}\lesssim T^{\frac{1}{r}-b} \|I_1 \|_{\widehat{L^r}}.
\]
By Parseval's identity,
\[
\mathcal{F}_x \int_\R  \sigma_T(t')e^{-it'\Delta} f(t') dt'(\xi)=\int_\R \overline{\mathcal{F}_t \sigma_T (\tau+|\xi|^2)}\mathcal{F} f(\tau,\xi)d\tau,
\]
and due to
$|\mathcal{F}_t \sigma_T (\tau)|\lesssim T \ang{T \tau}^{-1}$ we obtain
\[\|\chi_Te^{it\Delta}I_1\|_{X_r^{0,b}}\lesssim T^{1+\frac{1}{r}-b} \Big(\int\Big(\int_\R \ang{T(\tau+|\xi|^2)}^{-1}|\mathcal{F} f(\tau,\xi)|d\tau \Big)^{r'}d\mu (\xi)\Big)^{\frac{1}{r'}}.
\]
H\"older's inequality implies
\begin{align*}
\int_\R \ang{T(\tau+|\xi|^2)}^{-1}|\mathcal{F} f(\tau,\xi)|d\tau\lesssim{} T^{\beta-\frac{1}{r}} \Big(\int_\R \ang{\tau+|\xi|^2}^{\beta r'}|\mathcal{F} f(\tau,\xi)|^{r'}d\tau\Big)^{\frac{1}{r'}}
\end{align*}
for any $\tfrac{1}{r}>\beta>-\tfrac{1}{r'}$, which implies
\[
\|\chi_Te^{it\Delta}I_1\|_{X_r^{0,b}}\lesssim  T^{1+\beta-b} \|f\|_{X^{0,\beta}_r}.
\]
Concerning the second contribution we first observe that
\[
\ang{\tau+|\xi|^2}^b\lesssim \ang{\tau'+|\xi|^2}^b+|\tau-\tau'|^b,
\]
which implies
\[\|\chi_T I_2 \|_{X_r^{0,b}}\lesssim \|\widehat{\chi_T}\|_{L^1}\|I_2\|_{X_r^{0,b}}+\||\cdot|^b\widehat{\chi_T}\|_{L^1}\|I_2\|_{X_r^{0,0}}.\]
We have $\||\cdot|^a\widehat{\chi_T}\|_{L^1}\lesssim T^{-a}$ for any $a\geq 0$, and 
\[
\mathcal{F}I_2(\tau,\xi)=\mathcal{F}_t \sigma_T(\tau+|\xi|^2)\mathcal{F}f(\tau,\xi).
\]
We obtain
\[
\|I_2\|_{X_r^{0,b}}\lesssim T \Big(\int \int_\R \frac{\ang{\tau+|\xi|^2}^{r'b}}{\ang{T(\tau+|\xi|^2)}^{r'}}|\mathcal{F} f(\tau,\xi)|^{r'}d\tau d\mu (\xi)\Big)^{\frac{1}{r'}}\lesssim T^{1+\beta-b}\|f\|_{X_r^{0,\beta}},
\]
which implies the claim.
\end{proof}

\subsection*{Acknowledgments} The authors would like to thank the
referee for valuable comments. S.H. thanks Tadahiro Oh for a helpful
discussion about \cite{GuoKwonOh}. Support by the German Science Foundation, IRTG 2235, is gratefully acknowledged.

\end{document}